\newcommand{\blue}{}
\definecolor{mygreen}{rgb}{0.1,0.75,0.2}
\providecommand{\bbs}[1]{\left(#1\right)}
 \newtheorem{thm}{Theorem}[section]
 \newtheorem{cor}[thm]{Corollary}
 \newtheorem{lem}[thm]{Lemma}
 \newtheorem{prop}[thm]{Proposition}
 \theoremstyle{definition}
 \newtheorem{defn}[thm]{Definition}
 \theoremstyle{remark}
 \newtheorem{rem}[thm]{Remark}
 \numberwithin{equation}{section}
\DeclareMathOperator{\KL}{KL}
\DeclareMathOperator{\ran}{Ran}
\DeclareMathOperator{\kk}{Ker}
\DeclareMathOperator{\argmax}{argmax}
\newcommand{\peq}{\vec{x}^{\text{s}}}
\newcommand{\la}{\langle}
\newcommand{\ra}{\rangle}
\newcommand{\pt}{\partial}
\newcommand{\eps}{\varepsilon}
\newcommand{\ud}{\,\mathrm{d}}
\newcommand{\8}{\infty}
\newcommand{\bR}{\mathbb{R}}
\newcommand{\bZ}{\mathbb{Z}}
\newcommand{\bE}{\mathbb{E}}
\newcommand{\bP}{\mathbb{P}}
\newcommand{\vs}{\vec{s}}
\newcommand{\vx}{\vec{x}}
\newcommand{\psih}{u_{\scriptscriptstyle{\text{h}}}}
\newcommand{\ph}{\psi_{\scriptscriptstyle{\text{h}}}}
\newcommand{\uh}{u_{\scriptscriptstyle{\text{h}}}}
\newcommand{\vh}{v_{\scriptscriptstyle{\text{h}}}}
\newcommand{\psihs}{\psi_{\scriptscriptstyle{\text{h}}}^{\scriptscriptstyle{\text{ss}}}}
\newcommand{\V}{\scriptscriptstyle{\text{h}}}
\newcommand{\cv}{X^{\scriptscriptstyle{\text{h}}}}
\newcommand{\vxv}{\vec{x}_{i}}
\newcommand{\vxi}{\vec{x}_i}
\newcommand{\vyi}{\vec{y}_i}
\newcommand{\vp}{\vec{p}}
\newcommand{\vy}{\vec{y}}
\newcommand{\vm}{\vec{m}}
\newcommand{\vq}{\vec{q}}
\newcommand{\usc}{\text{USC}(\bR^N)}
\newcommand{\lsc}{\text{LSC}(\bR^N)}
\newcommand{\buc}{C(\bR^{N*})}
\newcommand{\ccc}{C_{c}(\mathbb{R}^{N*})}
\newcommand{\ellb}{\ell^\8(\Omega_{\V}^*)}
\newcommand{\ellc}{\ell^\8_{c}(\Omega_{\V}^*)}
\newcommand{\omp}{\Omega_{\V}^+}
\newcommand{\omn}{\Omega_{\V}\backslash \Omega_{\V}^+}
\begin{document}

\title[Large deviation for chemical reactions]{Large deviation principle and thermodynamic limit of chemical master equation via nonlinear semigroup}

\author[Y. Gao]{Yuan Gao}
\address{Department of Mathematics, Purdue University, West Lafayette, IN}
\email{gao662@purdue.edu}

\author[J.-G. Liu]{Jian-Guo Liu}
\address{Department of Mathematics and Department of
  Physics, Duke University, Durham, NC}
\email{jliu@math.duke.edu}


\keywords{Large deviation principle, Varadhan's inverse lemma,  Lax-Oleinik semigroup, monotone schemes, non-equilibrium chemical reactions}

\subjclass[2010]{49L25, 37L05, 60F10, 60J74, 92E20}

\date{\today}

\begin{abstract}
Chemical reactions can be modeled by a random time-changed Poisson process on countable states. The macroscopic behaviors, such as large fluctuations, can be studied via the WKB reformulation. The WKB reformulation for the backward equation is Varadhan's discrete nonlinear semigroup and is also a monotone scheme that approximates the limiting first-order Hamilton-Jacobi equations (HJE).
The discrete Hamiltonian is an m-accretive operator, which generates a nonlinear semigroup on countable grids and justifies the well-posedness of the chemical master equation (CME) and the backward equation with 'no reaction' boundary conditions.
The convergence from the monotone schemes to the viscosity solution of HJE is proved by constructing barriers to overcome the polynomial growth coefficients in the Hamiltonian. This implies the convergence of Varadhan's discrete nonlinear semigroup to the continuous Lax-Oleinik semigroup and leads to the large deviation principle for the chemical reaction process at any single time.
 Consequently, the macroscopic mean-field limit reaction rate equation is recovered with a concentration rate estimate. Furthermore, we establish the convergence from a reversible invariant measure to an upper semicontinuous viscosity solution of the stationary HJE.
\end{abstract}

\maketitle 

\section{Introduction}

Chemical or biochemical reactions, such as the production of useful materials in industry and the maintenance of metabolic processes with enzymes in living cells, are among the most important events in the world. At a microscopic scale, these reactions can be understood from a probabilistic viewpoint. 
{\blue In this paper, we focus on the large deviation principle (fluctuation estimate) for   chemical reaction processes in the thermodynamic limit regime. As a byproduct, the reaction rate equation will be recovered as a mean-field limit equation with an exponential concentration rate. To estimate the small fluctuations away from the typical chemical reaction trajectory, we will utilize Varadhan's exponential nonlinear semigroup method with novel techniques, which we will explain in detail below.}

A convenient way to stochastically describe chemical reactions is via random time-changed Poisson processes $X(t)$ \eqref{Csde}, c.f. \cite{Kurtz15}. This continuous time Markov process on countable states counts the number of molecular species $X_i(t), i=1, \cdots, N$ for chemical reactions happening in a large container characterized by a size $\frac{1}{h}\gg 1$. We assume chemical reactions in a container are independent of molecule position, and the molecular number is proportional to the container size. Therefore, we will refer to the large size limit $h\to 0$ as the thermodynamic limit or macroscopic limit. After counting the net change of the molecular numbers $\vec{\nu}_j$ for a $j$-th reaction, the rescaled process $\cv(t)=hX(t)$ is
\begin{equation} \label{Csde}
\begin{aligned}
\cv(t) = \cv(0) + \sum_{j=1}^M  \vec{\nu}_{j} h \Bigg(\mathbbm{1}_{\{\cv(t^-)+\vec{\nu}_j h \geq 0\}} Y^+_j  \bbs{\frac{1}{h}\int_0^t \tilde{\Phi}^+_j(\cv(s))\ud s}\\
-\mathbbm{1}_{\{\cv(t^-)-\vec{\nu}_j h \geq 0\}}Y^-_j  \bbs{\frac{1}{h}\int_0^t \tilde{\Phi}^-_j(\cv(s))\ud s}\Bigg),
\end{aligned}
\end{equation}
where $Y^\pm_{j}(t)$ are i.i.d. unit rate Poisson processes, and $\mathbbm{1}$ is the indicating function to show that there is no reaction if the next state $\cv(t^-)\pm\vec{\nu}_j h$ is negative. This `no reaction' constraint will also be reflected in the chemical master equation \eqref{odex} below. The intensity $\tilde{\Phi}_j^{\pm}(\vx)$ of this Poisson process is given by the law of mass action (LMA) \eqref{newR}, indicating the encounter of species in one reaction. The chemical master equation (CME) \eqref{rp_eq} for the rescaled process $\cv(t)$ is a linear ordinary differential equation (ODE) system on countable discrete grids with a `no reaction' boundary condition to maintain nonnegative counting states and the conservation of total probability. The key observation is that CME \eqref{rp_eq} has a monotonicity property, which is also known as a monotone scheme approximation for hyperbolic differential equations. The generator and the associated backward equation \eqref{backward} of the process $\cv$ can be regarded as a dual equation for CME. The backward equation \eqref{backward} is also a linear ODE system on the same countable discrete grids, adapting the `no reaction' boundary condition and monotonicity.

The macroscopic behaviors of chemical reactions with the above stochastic modeling are deterministic "statistical properties" that can be studied by taking the large size limit $h\to 0$. At the first level, the law of large numbers characterizes the mean-field limit nonlinear ODE, known as the reaction rate equation (RRE), with polynomial nonlinearity due to the law of mass action (LMA):
  \begin{equation}\label{odex}
\frac{\ud}{\ud t} \vec{x} =  \sum_{j=1}^M \vec{\nu}_j \bbs{\Phi^+_j(\vec{x}) - \Phi^-_j(\vec{x})}, \qquad \Phi^\pm_j(\vxv)= k^\pm_j  \prod_{\ell=1}^{N} \bbs{x_\ell}^{\nu^\pm_{j\ell}}.
\end{equation}
At a more detailed level, the large deviation principle estimates the fluctuations away from the mean-field limit RRE \eqref{odex}. To capture the small probabilities in the large deviation regime, the WKB reformulation $p_{\V}(\vxv,t) =  e^{-\frac{\psi_{\V}(\vxv,t)}{h}}$ of the master equation \cite{Kubo73} gives a discrete Hamiltonian and an exponentially nonlinear ODE system on the same countable discrete grids; see \eqref{upwind00}. This nonlinear ODE system inherits the monotonicity and the "no reaction" boundary condition from the CME, so we refer to \eqref{upwind00} as the CME in the Hamilton-Jacobi equation (HJE) form or the monotone scheme for HJE \eqref{HJEpsi}. The backward equation can proceed with the same WKB reformulation and inherit the "no reaction" boundary condition as the restriction $\vxi\pm\vec{\nu_{j}} h\geq 0$. The resulting exponentially nonlinear ODE system is also a nonlinear semigroup \eqref{wkb_b} on discrete countable grids:
\begin{equation}\label{upwind}
\begin{aligned}
\pt_t \uh(\vxi,t)= & H_{\V}(\uh(\vxi),\vxi)\\
 := & \sum_{j=1, \vxi+\vec{\nu_{j}} h\geq 0}^M     \tilde{\Phi}^+_j(\vxv)\bbs{ e^{\frac{\uh(\vxv+ \vec{\nu_{j}} h)-\uh(\vxv)}{h}} - 1}    + \sum_{j=1, \vxi-\vec{\nu_{j}} h\geq 0}^M \tilde{\Phi}_j^-(\vxv)\bbs{ e^{  \frac{\uh(\vxv- \vec{\nu_{j}}h)-\uh(\vxv)}{h} } - 1}.
\end{aligned}
\end{equation}
 A zero extension to negative grids, which is consistent with the "no reaction" boundary condition, is taken on $\tilde{\Phi}^\pm_j$; see \eqref{exPHI} and Lemma \ref{lem:generator}. Thus, after proper extension, the domain for \eqref{upwind} and the corresponding  HJE \eqref{HJE2psi} becomes the whole space. We observe that \eqref{upwind} is a monotone scheme because $H_{\V}$ is decreasing w.r.t. $\uh(\vxi)$ while increasing w.r.t. $\uh(\vxi\pm \vec{\nu}_j h)$.  The probability representation for this nonlinear semigroup is given by 
 \begin{equation}\label{semigroup}
 \bbs{S_t u_0}(\vxv)=h \log \bE^{\vxv}\bbs{e^{ \frac{u_0(\cv_t)}{h}}},
 \end{equation}
dated back to \textsc{Varadhan} \cite{Varadhan_1966}. The justification of the existence of the nonlinear semigroup on discrete countable grids relies on the resolvent approximation $u_{\V} - \Delta t H_{\V}(u_{\V})=f_{\V}$ (i.e.,  backward Euler scheme \eqref{bEuler}) and \textsc{Crandall-Liggett}'s nonlinear semigroup theory \cite{CrandallLiggett}; see Theorem \ref{thm:backwardEC}.

{\blue We will prove the large deviation principle for process $\cv_t$ at a single time through Varadhan's inverse lemma \cite{Bryc_1990}. In detail, the rigorous justification for the WKB expansion of the backward equation can be obtained by proving the convergence from the discrete Varadhan's nonlinear semigroup of \eqref{upwind} to the Lax-Oleinik semigroup solution of the limiting HJE:
\begin{equation}\label{HJE2psi}
\pt_t u(\vx,t) - H(\nabla u(\vx),\vx)=0, \quad  H(\vp,\vx):=\sum_{j=1}^M \bbs{ \Phi^+_j(\vec{x})\bbs{e^{\vec{\nu}_j\cdot \vp}-1 }     +   \Phi_j^-(\vec{x})\bbs{ e^{-\vec{\nu}_j\cdot \vp}-1 }  }.
\end{equation}
Notice the Lax-Oleinik semigroup representation for \eqref{HJE2psi}
\begin{equation}\label{LO} 
u(\vx,t) = \sup_{\vy  } \bbs{u_0(\vy) - I(\vy; \vx,t)},  \quad I(\vy; \vx,t) := \inf_{\gamma(0)=\vx, \gamma(t)=\vy} \int_0^t L(\dot{\gamma}(s),\gamma(s)) \ud s,
\end{equation}
where $L(\vs,\vx)$ is the convex conjugate of Hamiltonian $H(\vp,\vx)$.
This connects the viscosity solution to the HJE with the pathwise deterministic optimal control problem with terminal profit $u_0(y)$ and running cost $L(\dot{\gamma}(s),\gamma(s))$; cf. \cite{evans2008weak}. Then, the large deviation principle can be obtained via the inverse Varadhan's lemma (proved by \textsc{Bryc} \cite{Bryc_1990}). In other words, the sufficient conditions for the large deviation principle are to justify the convergence of the above nonlinear semigroup and the exponential tightness of the process. These are also necessary conditions, known as  Varadhan's lemma \cite{Varadhan_1966}.

To obtain the above convergence, the viscosity method is a well-developed tool pioneered by \textsc{Crandall} and \textsc{Lions} in \cite{crandall1983viscosity, crandall1984two} using two methods: the vanishing viscosity method and the monotone scheme approximation. Our first contribution is to observe that the WKB reformulation for the backward equation is exactly a monotone scheme for the limiting first-order HJE \eqref{HJE2psi}. Then, one can prove the convergence from the monotone scheme \eqref{upwind} to the viscosity solution of  HJE \eqref{HJE2psi} using the upper/lower semicontinuous envelopes of the discrete solution to the resolvent problem and \textsc{Crandall-Liggett}'s nonlinear semigroup theory.
However, two difficulties arise: the "no-reaction" boundary condition and the coefficients $\Phi_j^\pm(\vx)$ in Hamiltonian $H$ have polynomial growth at far fields. The abstract theorems established by \textsc{Souganidis} \cite{souganidis1985approximation} and \textsc{Barles, Perthame} \cite{Barles_Perthame_1987} cannot be directly applied.

Second, for compactness of $u_{\V}$ at the far field, we need a one-point compactification at the far field. Hence, we choose the ambient space as $\buc$, i.e., the far field limit is a constant value (see \eqref{space}).
The key observation is that each chemical reaction satisfies the conservation of mass, i.e., there exists a positive mass vector $\vm$ such that $\vec{\nu}_j \cdot \vm=0, \, j=1,\cdots,M$. Thanks to this, any mass function $f(\vm\cdot \vx)$ is a stationary solution to the Hamilton-Jacobi equations (HJEs) \eqref{upwind} and \eqref{HJE2psi}, and hence can be used to construct barriers to overcome the polynomial growth of $\Phi^\pm_j(\vx)$.    Thus we first prove the viscosity solution $u(\vx,t)\in \ccc$ and then extend it to $\buc$ by using the non-expansive property of the resolvent problem; see Corollary \ref{cor:semi}.

Third, to overcome the dynamic boundary condition due to the "no reaction" constraint for negative states, we perform a zero extension for ${\Phi}^\pm_j(\vx)$ for $\vx\in \mathbb{R}^N$ and impose a local Lipschitz continuous condition. This condition excludes some reactions, but it is convenient to use to guarantee the comparison principle for the HJE. Removing this technical condition is possible by considering an optimal control formulation with a boundary cost, but we leave it for future study.

 Our convergence result (see Theorem \ref{thm_vis}) from the discrete nonlinear semigroup solution of \eqref{upwind} to the viscosity solution of HJE \eqref{HJE2psi} provides the convergence part of the large deviation principle at a single time. The exponential tightness is trivial if the starting point of $\cv_t$ is deterministic and positive due to the mass conservation law for the chemical reaction. In general, to verify the exponential tightness of $\cv$ at any time $t$, we impose one of the following two assumptions: one is the existence of a positive reversible invariant measure $\pi_{\V}$ (see \eqref{master_db_n}) that is exponentially tight; the other is that there is compact support for the initial density $\rho^0_{\V}$; see Theorem \ref{thm_dp}. As a consequence of the large deviation principle at a single time, the mean-field limit RRE \eqref{odex} is recovered using the concentration of measures with an explicit concentration rate (see Corollary \ref{cor:ode}).

Our other contribution is in constructing a stationary solution to the HJE. The construction of a stationary solution to the HJE is usually difficult and non-unique. However, under the assumption of the existence of a positive reversible invariant measure $\pi_{\V}$ satisfying \eqref{master_db_n}, we can construct an upper semicontinuous (USC) viscosity solution in the Barron-Jensen's sense \cite{Barron_Jensen_1990}; see Proposition \ref{prop:USC}. However, since the stationary HJE does not have uniqueness, whether our construction selects a meaningful weak KAM solution is still unknown. This selection principle for the drift-diffusion process is proved in \cite{GL23}.
}

While a comprehensive review of the vast literature on chemical reactions and the large deviation principle is beyond the scope of this work, let us review here some closely related works. The use of viscosity solutions to the Hamilton-Jacobi equation (HJE) as the limit of the nonlinear semigroup of Markov processes was developed by \textsc{Feng} and \textsc{Kurzt} in \cite{feng2006large} as a general framework to study the large deviation principle. See also some recent developments in \cite{Kraaij_2016, Kraaij_2020}. The idea of using the nonlinear semigroup and the variational principle of Markov processes to study the large deviation principle can be traced back to \cite{Varadhan_1966, fleming1983optimal}, while the idea of using viscosity solutions to HJE can be traced back to \cite{Ishii_Evans_1985, fleming1986pde, Barles_Perthame_1987}. Another general approach that uses HJE to study the large deviation principle in the physical literature is the macroscopic fluctuation theory developed by \textsc{Bertini}, et al. \cite{bertini2002macroscopic, Bertini15}; see further mathematical analysis and the variational structure in \cite{Renger_2018, Patterson_Renger_Sharma_2021}.
{\blue For chemical reactions, the sample path large deviation principle was first proved in \cite{Dembo18} via constructing a process with linear interpolation in time and the inverse contraction principle; and recent developments have addressed the boundary issue with a uniform vanishing rate at the boundary \cite{Agazzi_Andreis_Patterson_Renger_2021}.   The LDP in path space covers the single time LDP result in this paper, but the methodologies are completely different; see the remark after Corollary \ref{cor:ode}.}  A dynamic large deviation principle for the pair of concentrations and fluxes of chemical reaction jump processes was proved in \cite{Patterson_Renger_2019}. The connection between the generalized gradient flow and the good rate function in the large deviation principle was rigorously established by \textsc{Mielke, Renger, Peletier} \cite{Mielke_Renger_Peletier_2014}. The mean-field limit of the chemical reaction stochastic model was proved by Kurtz in \cite{kurtz1970solutions, Kurtz71}. Recently, \textsc{Maas} and \textsc{Mielke} \cite{maas2020modeling} provided another proof via the evolutionary $\Gamma$-convergence approach under the detailed balance assumption.

The remaining part of this paper is organized as follows. In Section \ref{sec2}, we revisit some terminologies of stochastic/deterministic chemical reaction equations and introduce the WKB reformulations for CME and the backward equation. In Section \ref{sec3}, we study the monotonicity, construct barriers to control the polynomial growth at far fields, and investigate the solvability of the associated monotone schemes (i.e., WKB reformulations). Then, in Section \ref{sec:ext_dd}, we prove the well-posedness of the backward equation, and in Section \ref{sec:db_d}, we recover the well-posedness of CME for reversible processes. In Section \ref{sec4}, we prove the convergence from the monotone schemes to the viscosity solution of the HJE. We discuss the construction of USC viscosity solutions for the stationary HJE in Section \ref{sec:sHJE} and for the dynamic HJE in Section \ref{sec:vis}, respectively. The short-time classical solution to the HJE and error estimates are discussed in Section \ref{sec5}. Finally, in Section \ref{sec6}, we use the convergence results, together with the exponential tightness, to prove the large deviation principle at a single time, which also recovers the mean-field limit RRE for chemical reactions.

\section{Preliminaries: stochastic and deterministic models for chemical reactions,  and WKB reformulations for forward/backward equations}\label{sec2}

In this section, we provide the necessary background for understanding the stochastic modeling of chemical reactions using the random time-changed Poisson process (see Section \ref{sec2.1}). We then derive CME \eqref{rp_eq} with the corresponding 'no reaction' boundary condition (see Section \ref{sec2.2}). Additionally, we revisit the macroscopic RRE \eqref{odex} in Section \ref{sec2.3}. In Sections \ref{sec2.4} and \ref{sec2.5}, we employ the WKB reformulation to transform the CME and the backward equation into nonlinear ODE systems, specifically nonlinear discrete semigroups on countable grids.

\subsection{Random time changed Poisson process and the law of mass action for chemical reactions}\label{sec2.1}

Chemical reactions involving $\ell=1,\cdots,N$ species $X_i$ and $j=1,\cdots,M$ reactions can be kinematically described as
\begin{equation}\label{CRCR}
\text{Reaction }j: \quad \sum_{\ell} \nu_{j\ell}^+ X_{\ell} \quad \ce{<=>[k_j^+][k_j^-]} \quad   \sum_i \nu_{j\ell}^- X_\ell,
\end{equation}
where the nonnegative integers $\nu_{j\ell}^\pm \geq 0$ are stoichiometric coefficients and $k_j^\pm\geq 0$ are the reaction rates for the $j$-th forward and backward reactions. The column vector $\vec{\nu}_j:= \vec{\nu}_j^- - \vec{\nu}_j^+ := \bbs{\nu_{j\ell}^- - \nu_{j\ell}^+}_{\ell=1:N}\in \bZ^N$ is called the reaction vector for the $j$-th reaction, counting the net change in molecular numbers for species $X_\ell$. Let $\mathbb{N}$ be the set of natural numbers including zero. In this paper, all vectors $\vec{X}= \bbs{X_i}_{i=1:N} \in \mathbb{N}^N$ and $\bbs{\varphi_j}_{j=1:M},\, \bbs{k_j}_{j=1:M}\in \bR^M$ are column vectors.

Denote $\vec{m}=(m_\ell)_{\ell=1:N}$, where $m_\ell$ represents the molecular weight for the $\ell$-th species. Then the conservation of mass for the $j$-th reaction in \eqref{CRCR} implies 
\begin{equation}\label{mb_j}
\vec{\nu}_j \cdot \vec{m} = 0, \quad j=1,\cdots, M.
\end{equation}
Sometimes, in an open system where the materials exchange with the environment denoted as $\emptyset$, the exchange reaction $X_{\ell}\ce{<=>} \emptyset$ does not conserve mass.

Let $\mathbb{N}$ be the space of natural numbers, which serves as the state space for the counting process $X_\ell(t)$, representing the number of each species $\ell=1,\cdots,N$ in the biochemical reactions. With the reaction container size $1/h\gg 1$, the process $\cv_\ell(t)=hX_\ell(t)$ satisfies the random time-changed Poisson representation for chemical reactions \eqref{Csde} (see \cite{kurtz1980representations, Kurtz15}):
\begin{align*}
\cv(t) = \cv(0) + \sum_{j=1}^M  \vec{\nu}_{j} h \Bigg(\mathbbm{1}_{\{\cv(t^-)+\vec{\nu}_j h \geq 0\}} Y^+_j  \bbs{\frac{1}{h}\int_0^t \tilde{\Phi}^+_j(\cv(s))\ud s} \\
-\mathbbm{1}_{\{\cv(t^-)-\vec{\nu}_j h \geq 0\}}Y^-_j  \bbs{\frac{1}{h}\int_0^t \tilde{\Phi}^-_j(\cv(s))\ud s}\Bigg).
\end{align*}
Here, for the $j$-th reaction channel, $Y^\pm_{j}(t)$ are i.i.d. unit-rate Poisson processes, and the intensity function is given by the mesoscopic law of mass action (LMA):
\begin{equation}\label{newR}
\tilde{\Phi}_j^\pm(\cv) = {k_j^\pm}  \prod_{\ell=1}^{N} \frac{\bbs{\frac{\cv_\ell}{h}} ! \ h^{\nu_{j\ell}^\pm}}{ \bbs{\frac{\cv_\ell}{h} - \nu_{j\ell}^\pm}!}. 
\end{equation}
The `no reaction' constraints $\cv(t^-)\pm \vec{\nu}_{j} h\geq 0$ ensure that no jump occurs if the number of some species would become negative in the container. This `no reaction' correction to the process \eqref{Csde} was also noticed in \cite[eq(28)]{anderson2019constrained}, where a very similar `no reaction' constraint was imposed near the relative boundary of the positive orthant. Here, we use $\tilde{\Phi}^\pm_j$ to distinguish the mesoscopic LMA from the limit macroscopic LMA $\Phi^\pm_j$ in \eqref{lma}. In the literature, this process \eqref{Csde} is also known as the Marcus-Lushnikov process \cite{marcus1968stochastic, lushnikov1978coagulation} or Gillespie's process \cite{gillespie1972stochastic}. The existence and uniqueness of the stochastic equation \eqref{Csde} were proved by \cite{kurtz1980representations, Kurtz15} in terms of the corresponding martingale problem.

For the purpose of effectively handling the boundary condition when some species become zero, we extend the process $\cv(t)$ to include negative counting numbers for species such that those negative numbers remain unchanged. Clearly, if $\cv_\ell(0)<0$ for some $\ell$, then $\cv(t) \equiv \cv(0)$. To realize this, we also set the LMA as:
\begin{equation}\label{exPHI}
\tilde{\Phi}_j^\pm(\cv) = 0 \quad \text{if for some $\ell$, $\cv_\ell<0$}.
\end{equation}
In summary, the original process is given by:
\begin{equation}
\cv \in \Omega^+_{\V} := \{ \vxi = \vec{i} h; \vec{i} \in \mathbb{N}^N \},
\end{equation}
while after including the above extended notions, the process \begin{equation}
\cv\in \Omega_{\V}= \{ \vxi=\vec{i} h; \vec{i}\in \mathbb{Z}^N \}
\end{equation}
 can still be described by \eqref{Csde}.
In other words, if $\cv(0) \in \Omega_{\V} \backslash \Omega_{\V}^+$, then $\cv(t) \equiv \cv(0)$.

\subsection{Chemical master equation and `no reaction' boundary condition}\label{sec2.2}

For a chemical reaction modeled by \eqref{Csde}, we denote the counting probability of $\cv(t)$ as $p_{\V}(\vxv,t) = \bE(\mathbbm{1}_{\vxv}(\cv(t)))$, where $\mathbbm{1}_{\vxv}$ is the indicator function. Then $p_{\V}(\vxv,t)$, $\vxi\in \Omega_h$, satisfies  CME \cite{Kurtz15}:
\begin{equation}\label{rp_eq}
\begin{aligned}
\frac{\ud}{\ud t} p_{\V}(\vxi, t) =& \frac{1}{h}\sum_{j=1, \vxi- \vec{\nu}_{j}h\geq 0}^M  \left( \tilde{\Phi}^+_j(\vxi- \vec{\nu}_{j} h) p_{\V}(\vxi- \vec{\nu}_{j} h,t) - \tilde{\Phi}^-_j(\vxi) p_{\V}(\vxi,t) \right) \\
& + \frac{1}{h}\sum_{j=1, \vxi+\vec{\nu}_{j} h\geq 0}^M \left( \tilde{\Phi}^-_j(\vxi+\vec{\nu}_{j}h) p_{\V}(\vxi+\vec{\nu}_{j}h,t) - \tilde{\Phi}_j^+(\vxi) p_{\V}(\vxi,t) \right), \quad \text{for } \vxi\in \omp;\\ 
\frac{\ud}{\ud t} p_{\V}(\vxi, t) =& 0, \quad \text{for } \vxi \in \omn.
\end{aligned}
\end{equation}
It is natural to take $p_{\V}(\vxi, 0) = 0$ for $\vxi\in\omn$, which remains $0$ for all times for $\vxi\in\omn$. In the $Q$-matrix form, we denote $\frac{\ud}{\ud t} p_{\V} = Q^*_{\V} p_{\V}$, where $Q^*_{\V}$ is the transpose of the generator $Q_{\V}$ of process $\cv(t)$. For the derivation of the generator $Q_{\V}$ with the `no reaction' constraints, we refer to \cite[Appendix]{GL22}.

Here, the constraints $\vxi\pm \vec{\nu}_{j} h\geq 0$ are understood componentwisely, and we refer to this as the `no reaction' boundary condition for  CME. The `no reaction' boundary condition implies that if the number of some species is negative in the container, then no chemical reaction can occur.
In Lemma \ref{lem:mass1}, we will show that under this boundary constraint, the total probability is conserved. In Lemma \ref{lem:generator}, we will derive the generator $Q_{\V}$ and the backward equation associated with the boundary constraint.

In the literature, a commonly used simple Dirichlet boundary condition is $p_{\V}(\vxi) = 0$ for $\vxi\in \omn$ (or equivalently $\tilde{\Phi}^\pm(\vxi) = 0$ for $\vxi\in\omn$), as seen in \cite{Kurtz15, Gauckler14, patterson2019large, maas2020modeling}. However, the `no reaction' boundary condition is directly derived from \eqref{Csde} and imposes the condition that any reaction jump resulting in a negative species count cannot occur. This is in the spirit of the Feller-Wentzell type dynamic boundary conditions proposed by Feller and Wentzell \cite{Feller_1952, Feller_1954, venttsel1959boundary} to preserve the total probability. A similar boundary condition was used in \cite{anderson2019constrained}, where a constrained Langevin approximation was established to incorporate the boundary condition.

An immediate lemma below shows the conservation of total probability.

\begin{lem}\label{lem:mass1}
For CME \eqref{rp_eq}, we have the conservation of total probability:
\begin{equation}
\frac{\ud }{\ud t}  \sum_{\vxi \in \Omega_{\V}}p_{\V}(\vxi, t) = \frac{\ud }{\ud t}  \sum_{\vxi \in\omp}p_{\V}(\vxi, t)  = 0.
\end{equation}
\end{lem}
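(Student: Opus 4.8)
The plan is to reduce the claim to the sum over $\omp$ and then to observe that, once \eqref{rp_eq} is summed over all grid points, every flux term is counted once with a $+$ and once with a $-$ sign, so the total telescopes to zero. Since $p_{\V}(\vxi,0)=0$ for $\vxi\in\omn$ and, by the second line of \eqref{rp_eq}, $p_{\V}(\vxi,t)$ stays $0$ there for all $t$, the two sums $\sum_{\vxi\in\Omega_{\V}}p_{\V}(\vxi,t)$ and $\sum_{\vxi\in\omp}p_{\V}(\vxi,t)$ coincide and the first equality is immediate; it remains to prove $\frac{\ud}{\ud t}\sum_{\vxi\in\omp}p_{\V}(\vxi,t)=0$. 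For this I assume the summability $\sum_{\vxi\in\omp}\tilde{\Phi}^\pm_j(\vxi)\,p_{\V}(\vxi,t)<\infty$ for every $j$, locally uniformly in $t$; this holds because the mesoscopic LMA \eqref{newR} grows only polynomially in $\vxi/h$ while $p_{\V}(\cdot,t)$ has finite moments of the matching order (part of the admissibility of the initial data, and a property of the solution constructed in Section~\ref{sec:ext_dd}). Under this bound one may exchange $\frac{\ud}{\ud t}$ with $\sum_{\vxi}$ and freely rearrange the resulting absolutely convergent double series indexed by $(\vxi,j)$.

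Summing the first line of \eqref{rp_eq} over $\vxi\in\omp$ and multiplying by $h$ gives, for each reaction channel $j$, four single sums: $\sum_{\vxi\geq 0,\ \vxi-\vec{\nu}_j h\geq 0}\tilde{\Phi}^+_j(\vxi-\vec{\nu}_j h)\,p_{\V}(\vxi-\vec{\nu}_j h,t)$, then $-\sum_{\vxi\geq 0,\ \vxi-\vec{\nu}_j h\geq 0}\tilde{\Phi}^-_j(\vxi)\,p_{\V}(\vxi,t)$, then $\sum_{\vxi\geq 0,\ \vxi+\vec{\nu}_j h\geq 0}\tilde{\Phi}^-_j(\vxi+\vec{\nu}_j h)\,p_{\V}(\vxi+\vec{\nu}_j h,t)$, and finally $-\sum_{\vxi\geq 0,\ \vxi+\vec{\nu}_j h\geq 0}\tilde{\Phi}^+_j(\vxi)\,p_{\V}(\vxi,t)$. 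In the first sum substitute $\vyi=\vxi-\vec{\nu}_j h$: the constraint $\vxi\geq 0,\ \vxi-\vec{\nu}_j h\geq 0$ becomes $\vyi\geq 0,\ \vyi+\vec{\nu}_j h\geq 0$, so it equals $\sum_{\vyi\geq 0,\ \vyi+\vec{\nu}_j h\geq 0}\tilde{\Phi}^+_j(\vyi)\,p_{\V}(\vyi,t)$, which is exactly the negative of the fourth sum; the two cancel. Likewise, substituting $\vxi\mapsto\vxi+\vec{\nu}_j h$ in the third sum turns it into $\sum_{\vxi\geq 0,\ \vxi-\vec{\nu}_j h\geq 0}\tilde{\Phi}^-_j(\vxi)\,p_{\V}(\vxi,t)$, which cancels the second sum. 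Hence the summed right-hand side of \eqref{rp_eq} vanishes for each $j$, and the lemma follows.

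Conceptually the identity is just that the rows of the generator $Q_{\V}$ sum to zero (a conservative $Q$-matrix, $Q_{\V}\mathbbm{1}=0$), but since $Q_{\V}$ is only computed later in Lemma~\ref{lem:generator} I prefer the direct telescoping above. The only genuine point to watch is the legitimacy of interchanging differentiation with the infinite sum and of rearranging the double series, i.e.\ the moment bound on $p_{\V}(\cdot,t)$ against the polynomially growing rates; everything else is bookkeeping, and the key structural fact is that the `no reaction' constraints $\vxi\pm\vec{\nu}_j h\geq 0$ are precisely what make the two shifted index sets coincide after the substitutions.
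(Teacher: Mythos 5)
Your argument is correct and is essentially the paper's own proof: both sum \eqref{rp_eq} over $\omp$ and use the change of variables $\vyi=\vxi\pm\vec{\nu}_j h$, under which the `no reaction' constraints make the shifted index sets coincide so the gain and loss terms cancel channel by channel. The only difference is cosmetic (you pair the four sums per channel while the paper shifts the whole second line at once), plus your explicit remark on the summability needed to interchange $\frac{\ud}{\ud t}$ with the infinite sum, a point the paper's proof passes over silently.
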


\begin{proof}
From \eqref{rp_eq}, we have
\begin{align}\label{mass1}
 \frac{\ud }{\ud t} \sum_{\vxi \geq 0}p_{\V}(\vxi, t) 
  =& \frac{1}{h}\sum_{\vxi \geq 0}\sum_{j=1, \vxi-\vec{\nu}_{j}h\geq 0}^M   \tilde{\Phi}^+_j(\vxi-\vec{\nu}_{j}h)     p_{\V}(\vxi-\vec{\nu}_{j}h,t) -      \tilde{\Phi}^-_j(\vxi)     p_{\V}(\vxi,t) \nonumber \\
 &  +  \frac{1}{h}\sum_{\vxi \geq 0}\sum_{j=1, \vxi+ \vec{\nu}_{j}h\geq 0}^M   \tilde{\Phi}^-_j(\vxi+ \vec{\nu}_{j}h)     p_{\V}(\vxi+ \vec{\nu}_{j}h,t) -  \tilde{\Phi}_j^+(\vxi) p_{\V}(\vxi,t).
\end{align}
Then changing the variable $\vyi = \vxi+ \vec{\nu}_{j}h$, the second line above becomes
\begin{equation}
\frac{1}{h}\sum_{\vyi\geq 0}\sum_{\vyi - \vec{\nu}_{j} h \geq 0}   \tilde{\Phi}^-_j(\vyi)     p_{\V}(\vyi,t) -  \tilde{\Phi}_j^+(\vyi -  \vec{\nu}_{j}h) p_{\V}(\vyi -  \vec{\nu}_{j}h,t).
\end{equation}
And thus $\frac{\ud }{\ud t}  \sum_{\vxi \geq 0}p_{\V}(\vxi, t)  = 0$.

\end{proof}

The rigorous justification of non-explosion $\sum_{\vxi} p_{\V}(\vxi) = 1$ for CME was proved in \cite{maas2020modeling} under the detailed balance assumption \eqref{master_db_n}.
 We refer to \cite{Gauckler14} for the existence and regularity of solutions to CME.

\subsection{Mean-field limit is the macroscopic reaction rate equation}\label{sec2.3}

The mesoscopic jumping process $\cv$ in \eqref{Csde} can be regarded as a large-size interacting particle system. In the large-size limit (thermodynamic limit), this interacting particle system can be approximately described by a mean field equation, i.e., a macroscopic nonlinear chemical reaction-rate equation. If the law of large numbers in the mean field limit holds, i.e., $p_{\V}(\vxv, t)\to \delta_{\vx(t)}$ for some $\vx(t)$, then the limit $\vx(t)$ describes the dynamics of the concentration of $N$ species in the continuous state space $\mathbb{R}_+^N := \{\vx \in \bR^N; x_\ell > 0\}$ and is given by  RRE \eqref{odex}, also known as the chemical kinetic rate equation. The macroscopic fluxes $\Phi_j^\pm$ satisfy the macroscopic LMA:
\begin{equation}\label{lma}
\Phi^\pm_j(\vx) = k^\pm_j  \prod_{\ell=1}^{N} \bbs{x_\ell}^{\nu^\pm_{j\ell}}.
\end{equation}
For simplicity of analysis, we also extend this LMA as:
\begin{equation}\label{exLMA}
\Phi^{\pm}_j(\vx) = 0 \quad \text{if } \vx \in \bR^N \backslash \bR^N_+.
\end{equation}
This is a macroscopic approximation for the mesoscopic LMA \eqref{newR} since $\tilde{\Phi}_j^\pm(\vxi) \to \Phi_j^\pm(\vx)$ as $\vxi \to \vx$. This RRE with LMA was first proposed by Guldberg and Waage in 1864. The detailed balance condition for RRE \eqref{odex} is defined by Wegscheider (1901) and Lewis (1925): there exists $\peq > 0$ (componentwise) such that
\begin{equation}\label{DB}
\Phi^+_j(\peq) - \Phi^-_j(\peq) = 0, \quad \forall j.
\end{equation}

Kurtz \cite{kurtz1970solutions, Kurtz71} proved the law of large numbers for the large-size process $\cv(t)$; cf. \cite[Theorem 4.1]{Kurtz15}. That is, if $\cv(0)\to \vec{x}(0)$ as $h\to 0$, then for any $\epsilon > 0$ and $t > 0$,
\begin{equation}\label{lln}
\lim_{h \to 0} \bP\{ \sup_{0\leq s\leq t}|\cv(s)-\vec{x}(s)|\geq \epsilon \}=0.
\end{equation}
Thus, we will also refer to the large-size limiting ODE \eqref{odex} as the macroscopic RRE. This provides a passage from the mesoscopic LMA \eqref{newR} to the macroscopic one \eqref{lma}. {\blue Recent results in \cite{maas2020modeling} establish the evolutionary $\Gamma$-convergence from CME to the Liouville equation for the case that both CME and the limiting equation has a generalized gradient flow structure. Starting from a deterministic state $\vx_0$, \cite[Theorem 4.7]{maas2020modeling} recovers Kurtz's results on the mean field limit of CME. However, the approach in \cite{{maas2020modeling}}  requires the detailed balance assumption.} For the derivation of the mean field limit of CME with 'no reaction' constraints, please refer to \cite[Appendix]{GL22}.

\subsection{WKB  reformulation for CME and discrete HJE}\label{sec2.4}

Besides the macroscopic trajectory $\vx(t)$ given by the law of large numbers, the WKB expansion for $p_{\V}(\vxv, t)$ in CME \eqref{rp_eq} is another standard method \cite{Kubo73, Hu87, Dykman_Mori_Ross_Hunt_1994, SWbook95, QianGe17}, which builds a more informative bridge between mesoscopic dynamics and macroscopic behaviors.
To characterize the exponential asymptotic behavior, we make a change of variable
\begin{equation}\label{1.3}
 p_{\V}(\vxv,t) =  e^{-\frac{\psi_{\V}(\vxv,t)}{h}}, \quad p_{\V}(\vxv,0)=p_0(\vxv).
\end{equation}
CME can be recast as the following nonlinear ODE system for $\vxi \in \omp$:
\begin{equation}\label{upwind00}
\begin{aligned}
\partial_t \psi_{\text{h}}(\vxi) + \sum_{j=1, \,\vxi- \vec{\nu}_{j}h\geq 0}^M  &\tilde{\Phi}^+_j(\vxi-\vec{\nu}_{j}h )    e^{\left(\frac{\psi_{\V}(\vxi) - \psi_{\V}(\vxi-\vec{\nu}_jh)}{h}\right)}
-    \tilde{\Phi}_j^-(\vxi)  \\   
&+ \sum_{j=1, \,\vxi+\vec{\nu}_{j}h\geq 0}^M \tilde{\Phi}^-_j(\vxi+ \vec{\nu}_{j}h)     e^{\left(\frac{\psi_{\V}(\vxi)-\psi_{\V}(\vxi+\vec{\nu}_{j}h)}{h}\right)}-      \tilde{\Phi}^+_j(\vxi)=0,
\end{aligned}
\end{equation}
while for $\vxi\in\omn$, $\partial_t \psi_{\text{h}}(\vxi)=0$.
We point out that, in general, a constant is not a solution to \eqref{upwind00}. However, if $\psi_{\V}$ is a solution, then $\psi_{\V}+c$ is also a solution to \eqref{upwind00}.
Formally, as $h\to 0$, Taylor's expansion of $\psi_{\V}$ with respect to $h$ leads to the following HJE for the rescaled master equation \eqref{rp_eq} for $\psi(\vx,t)$:
\begin{equation}\label{HJEpsi}
  \pt_t \psi(\vec{x}, t) =   -\sum_{j=1}^M     \bbs{ \Phi^+_j(\vec{x})\bbs{e^{\vec{\nu_j} \cdot \nabla \psi(\vec{x},t)}   -  1} +  \Phi^-_j(\vec{x})\bbs{e^{-\vec{\nu_j} \cdot \nabla \psi(\vec{x},t)}   - 1 }}.
 \end{equation}

Define the Hamiltonian $H(\vp,\vx)$ on $\mathbb{R}^N\times \mathbb{R}^N$ as follows. For $\vx\in \mathbb{R}^N_+$,
\begin{equation}\label{H}
H(\vec{p},\vec{x}) := \sum_{j=1}^M \left[ \Phi^+_j(\vec{x})e^{\vec{\nu_j} \cdot \vec{p}} - \Phi^+_j(\vec{x}) + \Phi^-_j(\vec{x})e^{-\vec{\nu_j} \cdot\vec{p}} - \Phi^-_j(\vec{x}) \right].
\end{equation}
Recall that in \eqref{exLMA}, $\Phi^{\pm}_j(\vx)=0$ for $\vx\in \mathbb{R}^N\setminus \mathbb{R}^N_+$. We naturally extend $H(\vp,\vx)$ to $\mathbb{R}^N\times \mathbb{R}^N$ such that $H(\vp,\vx)=0$ for $\vx\notin \mathbb{R}^N_+$.
Then the HJE for $\psi(\vx,t)$ can be recast as
\begin{equation}\label{HJE2psi00}
\partial_t \psi + H(\nabla \psi, \vx) =0, \quad \vx\in\mathbb{R}^N.
\end{equation}
The WKB analysis above defines a Hamiltonian $H(\vp,\vx)$, which contains almost all the information for the macroscopic dynamics \cite{Dykman_Mori_Ross_Hunt_1994, GL22}. For this reason, we also call \eqref{upwind00} the CME in the HJE form.
We remark that this kind of WKB expansion was first used by Kubo et al. \cite{Kubo73} for master equations of general Markov processes and later applied to CME in \cite{Hu87}. In \cite{Dykman_Mori_Ross_Hunt_1994}, the HJE \eqref{HJE2psi00} with the associated Hamiltonian $H$ in \eqref{H} was formally derived.
{\blue In the mathematical analysis later, we will impose a technique assumption that $\Phi^\pm_j(\vx)$ is locally  Lipschitz continuous after the zero extension. We point out this assumption might be removed by directly consider the optimal control problem in a domain with a boundary and impose an appropriate boundary cost, which will be an interesting future study. }

\subsection{WKB reformulation for the backward equation is Varadhan's  nonlinear semigroup}\label{sec2.5}

The fluctuation on path space, i.e., the large deviation principle, can be computed through WKB expansion for the backward equation.
Recall the rescaled process $\cv(t)$ in \eqref{Csde}, which is also denoted as $\cv_t$ for simplicity.
For any $f\in C_b(\mathbb{R}^N_+)$, denote 
\begin{equation}
w_{\V}(\vxv, t) := \mathbb{E}^{\vxv}[f(\cv_t)],
\end{equation}
then $w_{\V}(\vxv,t)$ satisfies the backward equation
\begin{equation}\label{backward}
\partial_t w_{\V} = Q_{\V} w_{\V}, \quad w_{\V}(\vxv, 0)=f(\vxv).
\end{equation}
Notice the 'no reaction' boundary condition we derived in CME \eqref{rp_eq}. Below we give a lemma to explicitly derive the associated boundary condition in the generator $Q_{\V}$ as the duality of $Q^*_{\V}$. 
\begin{lem}\label{lem:generator}
The backward equation with explicit generator $Q_{\V}$ can be expressed as the duality of the forward equation
\begin{equation}
\begin{aligned}
\partial_t w_{\V}(\vxi,t) =& \frac{1}{h}\sum_{j=1, \vxi+\vec{\nu_{j}} h\geq 0}^M \tilde{\Phi}^+_j(\vxv)\left[ w_{\V}(\vxv+ \vec{\nu_{j}}h)  - w_{\V}(\vxv) \right] \\
&+ \frac{1}{h}\sum_{j=1, \vxi-\vec{\nu_{j}} h\geq 0}^M \tilde{\Phi}_j^-(\vxv)\left[ w_{\V}(\vxv- \vec{\nu_{j}}h )-w_{\V}(\vxv) \right], \quad \vxi\in\omp; \\
\partial_t w_{\V}(\vxi,t) =& 0, \quad \vxi\in \omn.
\end{aligned}
\end{equation}
\end{lem}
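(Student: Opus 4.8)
The generator $Q_{\V}$ is, by definition, the matrix whose transpose $Q^*_{\V}$ appears in CME \eqref{rp_eq}, so the plan is simply to write out that transpose explicitly with respect to the pairing $\langle w,p\rangle:=\sum_{\vxi\in\Omega_{\V}}w(\vxi)p(\vxi)$ and check that it agrees with the stated expression. Concretely, since $w_{\V}(\vxi,t)=\bE^{\vxi}(f(\cv_t))=\sum_{\vyi}P_t(\vxi,\vyi)f(\vyi)$ with $P_t$ the transition kernel of $\cv$, and the law $p_{\V}(\cdot,t)$ of $\cv_t$ solves $\frac{\ud}{\ud t}p_{\V}=Q^*_{\V}p_{\V}$, the identity $\langle f,p_{\V}(\cdot,t)\rangle=\bE(f(\cv_t))$ together with the Markov property reduces the statement to the following purely algebraic claim: the right-hand side of \eqref{rp_eq}, viewed as a linear operator $Q^*_{\V}$ on summable sequences, has transpose given by the displayed $Q_{\V}$.

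First I would compute $\langle w,Q^*_{\V}p\rangle=\sum_{\vxi\in\omp}w(\vxi)\,(Q^*_{\V}p)(\vxi)$ (the part over $\omn$ vanishes, since $\frac{\ud}{\ud t}p_{\V}\equiv0$ there) and split it into the two ``gain'' sums and the two ``loss'' sums coming from the two lines of \eqref{rp_eq}. In each gain sum I change variables exactly as in the proof of Lemma \ref{lem:mass1}: $\vyi=\vxi-\vec{\nu}_jh$ in the first and $\vyi=\vxi+\vec{\nu}_jh$ in the second. The only delicate point is the book-keeping of the `no reaction' constraints: the constraint pair $\{\vxi\ge0,\ \vxi-\vec{\nu}_jh\ge0\}$ becomes $\{\vyi+\vec{\nu}_jh\ge0,\ \vyi\ge0\}$, so after relabeling each gain term pairs with the matching loss term $-\tilde{\Phi}_j^+(\vyi)p(\vyi)w(\vyi)$ to form the forward difference $w(\vyi+\vec{\nu}_jh)-w(\vyi)$ under the single constraint $\vyi+\vec{\nu}_jh\ge0$; symmetrically the other pair yields $w(\vyi-\vec{\nu}_jh)-w(\vyi)$ under $\vyi-\vec{\nu}_jh\ge0$. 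Collecting the coefficient of each $p(\vyi)$ then gives
\[
\langle w,Q^*_{\V}p\rangle=\sum_{\vyi\in\omp}p(\vyi)\Bigl[\tfrac{1}{h}\!\!\sum_{j:\,\vyi+\vec{\nu}_jh\ge0}\!\!\tilde{\Phi}_j^+(\vyi)\bigl(w(\vyi+\vec{\nu}_jh)-w(\vyi)\bigr)+\tfrac{1}{h}\!\!\sum_{j:\,\vyi-\vec{\nu}_jh\ge0}\!\!\tilde{\Phi}_j^-(\vyi)\bigl(w(\vyi-\vec{\nu}_jh)-w(\vyi)\bigr)\Bigr],
\]
which is exactly $\langle Q_{\V}w,p\rangle$ with $Q_{\V}w$ as claimed; on $\omn$ both differences are multiplied by $\tilde{\Phi}_j^\pm\equiv0$ by the extension \eqref{exPHI}, giving $\pt_t w_{\V}(\vxi,t)=0$ there. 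As an independent cross-check, this $Q_{\V}$ is precisely the standard jump-process generator one reads off directly from \eqref{Csde}, where from $\vxi$ the chain jumps to $\vxi+\vec{\nu}_jh$ at rate $\tfrac1h\tilde{\Phi}_j^+(\vxi)$ and to $\vxi-\vec{\nu}_jh$ at rate $\tfrac1h\tilde{\Phi}_j^-(\vxi)$, each only when the target lies in $\omp$.

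The main (and essentially only non-algebraic) obstacle is justifying the rearrangement of these doubly-indexed countable sums. This is legitimate once $f$ is bounded (so $w_{\V}$ is bounded), $p_{\V}(\cdot,t)$ is summable, and one notes that for each fixed grid point only the finitely many reactions $j=1,\dots,M$ contribute; the conservation/non-explosion of total mass from Lemma \ref{lem:mass1} controls the tails, after which Fubini applies and the change of variables above is valid. Everything else is the constraint book-keeping around the `no reaction' boundary, which mirrors the computation already carried out in the proof of Lemma \ref{lem:mass1}.
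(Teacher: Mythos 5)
Your proposal is correct and follows essentially the same route as the paper: compute the pairing $\la w_{\V},Q^*_{\V}p\ra$ over $\omp$, change variables $\vyi=\vxi\mp\vec{\nu}_jh$ in the gain terms, track the `no reaction' constraints, and read off $Q_{\V}w_{\V}$ as the coefficient of $p(\vyi)$, with $Q_{\V}w_{\V}=0$ on $\omn$. The extra remarks on Fubini/summability and the cross-check against the jump rates in \eqref{Csde} are fine additions but do not change the argument.
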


\begin{proof}
Multiplying \eqref{rp_eq} by $w_{\V}$ and summation yields
\begin{equation}
\begin{aligned}
\langle w_{\V}, Q_{\V}^* p \rangle = & \frac{1}{h}\sum_{\vxi\geq 0}\sum_{j=1, \vxi- \vec{\nu}_{j}h\geq 0}^M \tilde{\Phi}^+_j(\vxi- \vec{\nu}_{j} h) p_{\V}(\vxi- \vec{\nu}_{j} h,t)w_{\V}(\vxi) - \tilde{\Phi}^-_j(\vxi) p_{\V}(\vxi,t)w_{\V}(\vxi) \\
& + \frac{1}{h}\sum_{\vxi\geq 0}\sum_{j=1, \vxi+\vec{\nu}_{j} h\geq 0}^M \tilde{\Phi}^-_j(\vxi+\vec{\nu}_{j}h) p_{\V}(\vxi+\vec{\nu}_{j}h,t)w_{\V}(\vxi) - \tilde{\Phi}_j^+(\vxi) p_{\V}(\vxi,t)w_{\V}(\vxi).
\end{aligned}
\end{equation}
Here from \eqref{rp_eq}, we used 
\begin{equation}
\langle w_{\V}, Q_{\V}^* p \rangle = \langle w_{\V}, Q_{\V}^* p \rangle_{\omp}.
\end{equation}
Changing variable $\vyi=\vxi- \vec{\nu}_{j}h$ in the above $\tilde{\Phi}^+(\vxi- \vec{\nu}_{j}h)$ term while changing variable $\vyi=\vxi+ \vec{\nu}_{j}h$ in the above $\tilde{\Phi}^-(\vxi+ \vec{\nu}_{j}h)$, we have
\begin{equation}
\begin{aligned}
\langle Q_{\V} w_{\V}, p \rangle_{\omp} = & \frac{1}{h}\sum_{\vyi\geq 0}\sum_{j=1, \vyi+ \vec{\nu}_{j}h\geq 0}^M \tilde{\Phi}^+_j(\vyi) p_{\V}(\vyi ,t)[w_{\V}(\vyi+ \vec{\nu}_{j} h) - w_{\V}(\vyi)] \\
& + \frac{1}{h}\sum_{\vyi\geq 0}\sum_{j=1, \vyi-\vec{\nu}_{j} h\geq 0}^M \tilde{\Phi}^-_j(\vyi ) p_{\V}(\vyi,t)[w_{\V}(\vyi- \vec{\nu}_{j}h) - w_{\V}(\vyi)].
\end{aligned}
\end{equation}
Meanwhile, we set
\begin{equation}
Q_{\V} w_{\V} = 0, \quad \vxi\in \omn.
\end{equation}
Regarding $p$ as arbitrary test functions, this gives the backward equation \eqref{backward}.
\end{proof}

Denote $\uh(\vxi,t)= h\log w_{\V}(\vxi,t)$ for $\vxi\in\Omega_{\V}$.
We obtain a nonlinear ODE system for the backward equation in HJE form
\begin{equation}\label{upwind0}
\pt_t \uh(\vxv,t) = h e^{- \frac{\uh(\vxv,t)}{h}} Q_{\V} e^{ \frac{\uh(\vxv,t)}{h}} =:  H_{h}( \uh), \quad \uh(\vxv,0) = h\log f(\vxv).
\end{equation}
The above WKB reformulation for the backward equation is equivalent to
\begin{equation}\label{wkb_b}
\uh(\vxv,t) =h \log w_{\V}(\vxv,t) =  h \log \bE^{\vxv}\bbs{f(\cv_t)} = h \log \bE^{\vxv}\bbs{e^{ \frac{u_0(\cv_t)}{h}}} =:\bbs{S_t u_0}(\vxv),
\end{equation}
which is the so-called nonlinear semigroup \cite{Varadhan_1966, feng2006large} for process $\cv(t)$. Notice here the process $\cv$ and the nonlinear semigroup are all extended to $\vxi\in\Omega_{\V}$.

We first write  $H_{\V}$  explicitly. For $\vxi\geq 0$,
\begin{align}\label{tm24}
  H_{\V}(u) =&  \sum_{j=1, \vxi+\vec{\nu_{j}} h\geq 0}^M     \tilde{\Phi}^+_j(\vxv)\bbs{ e^{\frac{\uh(\vxv+ \vec{\nu_{j}} h)-\uh(\vxv)}{h}} - 1}    + \sum_{j=1, \vxi-\vec{\nu_{j}} h\geq 0}^M \tilde{\Phi}_j^-(\vxv)\bbs{ e^{  \frac{\uh(\vxv- \vec{\nu_{j}}h)-\uh(\vxv)}{h} } - 1}. 
\end{align}
From the extended $\tilde{\Phi}^{\pm}_j(\vxi)$ in \eqref{exPHI}, \eqref{tm24} also naturally extended to $\vxi\in\Omega_{\V}$.
We see $\pt_t \uh = H_{\V}(\uh)$ is a nonlinear ODE system on the countable grids.
We point out a constant is always a solution to \eqref{upwind0}. Meanwhile, if $\uh$ is a solution then $\uh+c$ is also a solution to \eqref{upwind0}. We will use these two important invariance facts to prove the existence and contraction principle later; see Lemma \ref{lem:perron} and Proposition \ref{prop:cp}.

Take a formal limit in \eqref{upwind0} with the discrete Hamiltonian $H_{\V}$ in \eqref{tm24}, we obtain the HJE in terms of $u(\vx,t), \,\vx\in \mathbb{R}^N$, i.e., \eqref{HJE2psi}
\begin{equation}
\partial_t u(\vx,t) =  \sum_{j=1}^M \left[ \Phi^+_j(\vec{x})\left(e^{\vec{\nu}_j\cdot \nabla u(\vx, t)}-1\right)     +   \Phi_j^-(\vec{x})\left( e^{-\vec{\nu}_j\cdot \nabla u(\vx, t)}-1\right)  \right] = H(\nabla u(\vx), \vx).
\end{equation}
We remark that the duality between the forward and backward equations, after the WKB limit, leads to the same HJE with only a sign difference in the time derivative.
The boundary condition for \eqref{upwind0} is incorporated into the constraint $\vxi \pm \vec{\nu_{j}} h\geq 0$ in the equation.
Imposing a physically meaningful boundary condition for the limit HJE \eqref{HJE2psi} at $\vx=\vec{0}$ is complicated. However, in our construction of the viscosity solution to the HJE, as a limit of \eqref{upwind0}, it automatically inherits the boundary condition of \eqref{upwind0}, and at the same time, the HJE \eqref{HJE2psi} is defined on the whole space $\mathbb{R}^N$ by using an extension; see Section \ref{sec:vis}. This is in the same spirit of the Feller-Wentzell type dynamic boundary conditions, which were proposed by Feller and Wentzell to impose proper boundary conditions preserving total mass \cite{Feller_1952, Feller_1954, venttsel1959boundary}.

From now on, for both the discrete monotone scheme \eqref{upwind} and the HJE \eqref{HJEpsi}, the domain refers to the extended domain $\Omega_{\V}$ and $\mathbb{R}^N$, respectively.

 \section{Wellposedness of CME and backward equation via nonlinear semigroup}\label{sec3}

In this section, we study the well-posedness of CME \eqref{rp_eq} and the backward equation \eqref{backward} via the nonlinear semigroup method. First, the WKB reformulation for both CME \eqref{rp_eq} and the backward equation \eqref{backward} leads to two discrete HJEs, respectively. The main difference between them is the sign of the time derivative. 

Next, we observe that both CME and the backward equation can be regarded as monotone schemes for linear hyperbolic equations. These monotone schemes, after the WKB reformulation, are still monotone schemes for the corresponding HJEs. The essential features of these monotone schemes are the monotonicity and the nonexpansive estimates, which naturally generate a nonlinear semigroup thanks to Crandall-Liggett's nonlinear semigroup theory. 
Using this observation, this section obtains a unique discrete solution to the corresponding HJEs.

In Section \ref{sec:upwind}, we study the monotone and nonexpansive properties of the monotone scheme with backward Euler time discretization and then prove the existence and uniqueness of the backward Euler scheme \eqref{bEuler} by the Perron method. 
We point out that we cannot directly use the explicit scheme in \cite{crandall1984two} because LMA \eqref{lma} leads to a polynomial growth in the coefficients $\Phi^\pm_j(\vx)$. Thus, we will prove the existence and uniqueness of this backward Euler scheme \eqref{bEuler}. 
In Section \ref{sec:ext_dd}, by taking $\Delta t\to 0$ and using the nonexpansive nonlinear semigroup constructed by Crandall-Liggett \cite{CrandallLiggett}, we prove the global existence of the solution to the backward equation \eqref{backward}. 

In order to recover the existence and exponential tightness of CME from the results of the backward equation, Section \ref{sec:db_d} assumes the existence of a positive reversible invariant measure $\pi_{\V}$ satisfying \eqref{master_db_n} and then proves the existence, comparison principle, and exponential tightness of CME; see Corollary \ref{cor:tight}. This reversibility assumption includes  some non-equilibrium enzyme reactions \cite{GL22}, where the energy landscape (i.e., the limiting solution to the stationary HJE in Proposition \ref{prop:USC}) is nonconvex with multiple steady states indicating three key features of non-equilibrium reactions: multiple steady states, non-zero flux, and positive entropy production rate at the non-equilibrium steady states (NESS).

\subsection{A monotone scheme for HJE inherited from CME}\label{sec:upwind}
Recall the backward equation \eqref{backward} and the WKB reformulation \eqref{upwind0}. Recall the countable discrete domain as
\begin{equation}
\Omega_{\V}:= \{ \vxi=\vec{i} h; \vec{i}\in \mathbb{Z}^N \}.
\end{equation}
\eqref{upwind} is exactly a monotone scheme for the corresponding HJE \eqref{HJE2psi}.

We remark that the solution to \eqref{upwind} shifted by a constant is still a solution. After proving the well-posedness, we also refer to the solution to \eqref{upwind} as Varadhan's nonlinear semigroup.

\subsubsection{Nonexpansive property for backward Euler monotone scheme}

We now rewrite the monotone scheme \eqref{upwind} following the \textsc{Barles-Souganidis} framework \cite{barles1991convergence}. Denote the monotone scheme operator as
\begin{equation}\label{def_H}
H_{\V}: D(H_{\V}) \subset \ell^{\infty}(\Omega_{\V}) \to \ell^{\infty}(\Omega_{\V}); \qquad  ( u(\vxi) )_{\vxi\in\Omega_{\V}} \mapsto ( H_{\V}(\vxi, u(\vxi), u(\cdot)) )_{\vxi\in\Omega_{\V}},
\end{equation}
where $D(H_{\V})$ is the domain of $H_{\V}$ defined as $D(H_{\V}):=\{(u(\vxi))\in\ell^{\infty}(\Omega_{\V}); (H_{\V}(\vxi, u(\vxi), u))\in \ell^{\infty}(\Omega_{\V})\}$.

The discrete Hamiltonian $H_{h}$ is defined as
\begin{equation}\label{def_Hh}
H_{h}(\vxi, \psih(\vxi), \varphi):= \sum_{j=1, \,\vxi+ \vec{\nu}_{j}h\geq 0}^M  \tilde{\Phi}^+_j(\vxv)\left( e^{\frac{\varphi(\vxv+ \vec{\nu_{j}} h)-\uh(\vxv)}{h}} - 1 \right) + \sum_{j=1, \,\vxi-\vec{\nu}_{j}h\geq 0}^M \tilde{\Phi}_j^-(\vxv)\left( e^{  \frac{\varphi(\vxv- \vec{\nu_{j}}h)-\uh(\vxv)}{h} } - 1 \right).
\end{equation}
This notation of the discrete Hamiltonian follows \cite{barles1991convergence}.

It is straightforward to verify that $H_{h}$ satisfies the monotone scheme condition, i.e.,
\begin{equation}\label{mono}
\text{if } \varphi_1 \leq \varphi_2 \text{, then } H_{h}(\vx, \uh(\vx), \varphi_1) \leq H_{h}(\vx,\uh(\vx), \varphi_2).
\end{equation}

Denote the time step as $\Delta t$. Then the backward Euler time discretization gives 
\begin{equation}\label{bEuler}
\frac{\psih^{n}-\psih^{n-1}}{\Delta t} - H_{\V}(\vxi, \psih^{n}(\vxi), \psih^{n})=0.
\end{equation}
Here, the second variable in $H$ is the value $\psih^n(\vxi)$ at $\vxi$, and the third variable $\psih$ replaces $\varphi$ in \eqref{def_H}, indicating the dependence of the values $\psih$ at the surrounding jumping points $\psih(\vxi\pm \vec{\nu}_j h)$. This notation clearly shows that $H_{\V}$ is decreasing with respect to the second variable while increasing with respect to the third variable.
Then $\psih^{n}=(I-\Delta t H_{\V})^{-1} \psih^{n-1}:= J_{\Delta t, h} \psih^{n-1}$ can be solved as the solution $u_{\V}$ to the discrete resolvent problem with $f_{\V}=\psih^{n-1}$
\begin{equation}\label{dis_resolvent}
u_{\V}(\vxi) - \Delta t H_{\V}(\vxi, u_{\V}(\vxi), u_{\V})=f_{\V}(\vxi).
\end{equation}

We first prove the nonexpansive property for the discrete solutions, which implies the uniqueness of the solution to \eqref{bEuler}.
\begin{lem}\label{lem_nonexp_d}
Let $u_{\V}$ and $v_{\V}$ be two solutions satisfying $u_{\V}= J_{\Delta t, h} f_{\V}$ and $v_{\V}=J_{\Delta t, h} g_{\V}$. Then we have
\begin{enumerate}[(i)]
\item Monotonicity: if $f_{\V}\leq g_{\V}$, then $u_{\V}\leq v_{\V}$;
\item Nonexpansive: 
\begin{equation}\label{nonexp_d}
\|u_{\V}-v_{\V}\|_{\infty} \leq \|f_{\V}-g_{\V}\|_{\infty}.
\end{equation}
\end{enumerate}
\end{lem}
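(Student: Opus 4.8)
The plan is to establish (i) by a discrete maximum-principle argument adapted to the monotone-scheme structure \eqref{mono}, and then to deduce the nonexpansive bound (ii) from (i) together with the shift invariance of $H_{\V}$. For the latter, observe from \eqref{def_Hh} that $H_{\V}(\vxi,r,\varphi)$ depends on $r$ and $\varphi$ only through the increments $\varphi(\vxi\pm\vec{\nu}_j h)-r$, so $H_{\V}(\vxi,r+c,\varphi+c)=H_{\V}(\vxi,r,\varphi)$ and hence $J_{\Delta t,h}(f_{\V}+c)=J_{\Delta t,h}f_{\V}+c$ for every $c\in\bR$. Writing $K:=\|f_{\V}-g_{\V}\|_{\8}$ and applying (i) to the sandwich $g_{\V}-K\le f_{\V}\le g_{\V}+K$ then gives $v_{\V}-K\le u_{\V}\le v_{\V}+K$, which is \eqref{nonexp_d}. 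So the substance is in (i).

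To prove (i), set $w:=u_{\V}-v_{\V}\in\ell^{\8}(\Omega_{\V})$ and subtract the two resolvent identities \eqref{dis_resolvent}: for every $\vxi$,
\begin{equation*}
w(\vxi)=\Delta t\,\bigl(H_{\V}(\vxi,u_{\V}(\vxi),u_{\V})-H_{\V}(\vxi,v_{\V}(\vxi),v_{\V})\bigr)+\bigl(f_{\V}(\vxi)-g_{\V}(\vxi)\bigr),
\end{equation*}
with $f_{\V}-g_{\V}\le 0$. Suppose $M:=\sup_{\Omega_{\V}}w>0$ and, for the moment, that it is attained at some $\vx^{*}$. On $\omn$ the identity reduces to $u_{\V}=f_{\V}$, $v_{\V}=g_{\V}$, so $\vx^{*}\in\omp$; from $w(\vx^{*}\pm\vec{\nu}_j h)\le w(\vx^{*})$ we get $u_{\V}(\vx^{*}\pm\vec{\nu}_j h)-u_{\V}(\vx^{*})\le v_{\V}(\vx^{*}\pm\vec{\nu}_j h)-v_{\V}(\vx^{*})$ for every $j$. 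The index sets $\{j:\vx^{*}\pm\vec{\nu}_j h\ge0\}$ in \eqref{def_Hh} depend on $\vx^{*}$ only, so, since $\tilde\Phi_j^{\pm}\ge0$ and $s\mapsto e^{s}$ is increasing, a term-by-term comparison yields $H_{\V}(\vx^{*},u_{\V}(\vx^{*}),u_{\V})\le H_{\V}(\vx^{*},v_{\V}(\vx^{*}),v_{\V})$; the identity then forces $M=w(\vx^{*})\le0$, a contradiction. Hence $u_{\V}\le v_{\V}$.

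The main obstacle is that $\sup w$ need not be attained, and here the polynomial growth of the mesoscopic LMA \eqref{newR} bites: replacing $\vx^{*}$ by a near-maximizing sequence $\vx_k$ with $w(\vx_k)\ge M-\delta_k$ only gives $u_{\V}(\vx_k\pm\vec{\nu}_j h)-u_{\V}(\vx_k)\le v_{\V}(\vx_k\pm\vec{\nu}_j h)-v_{\V}(\vx_k)+\delta_k$, so the comparison of $H_{\V}$ picks up an error of size $\sim(e^{\delta_k/h}-1)\sum_j\bigl(\tilde\Phi_j^{+}(\vx_k)+\tilde\Phi_j^{-}(\vx_k)\bigr)$, which need not vanish when $\vx_k\to\infty$. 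Closing this gap requires extra input, and I would supply it in the spirit of the paper's barrier constructions: either penalize, applying the above argument to $w-\eps\Psi$ for a nonnegative coercive barrier $\Psi$ with suitably controlled grid increments so that $w-\eps\Psi$ has a genuine maximizer $\vx_{\eps}$, and then let $\eps\downarrow0$ using $\eps\Psi(\vx_{\eps})\to0$; or use the far-field information that the resolvent solutions inherit from their data — constants are stationary, $H_{\V}(\vxi,c,c)=0$ — so that along any escaping sequence $w(\vx_k)$ converges to the difference of the far-field limits of $f_{\V}$ and $g_{\V}$, which is already $\le0$. With either device the maximum argument goes through and establishes (i), whence (ii) follows as above.
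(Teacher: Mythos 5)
Your core argument is exactly the paper's: at a point where $u_{\V}-v_{\V}$ is maximal, the increments of $u_{\V}$ along the admissible jump directions are dominated by those of $v_{\V}$, so monotonicity of the exponential and $\tilde{\Phi}_j^\pm\geq 0$ give $H_{\V}(\vx^*,u_{\V}(\vx^*),u_{\V})\leq H_{\V}(\vx^*,v_{\V}(\vx^*),v_{\V})$, and the resolvent identity \eqref{dis_resolvent} forces $\max(u_{\V}-v_{\V})\leq\max(f_{\V}-g_{\V})$; the paper performs the same comparison, merely phrased through the mean value theorem. Your derivation of (ii) from (i) plus the shift invariance $J_{\Delta t,h}(f_{\V}+c)=J_{\Delta t,h}f_{\V}+c$ is a small (Crandall--Tartar style) variation on the paper, which instead applies the one-sided estimate to both orderings; both are fine. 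The genuinely useful part of your write-up is the attainment issue: the paper dismisses it with ``otherwise pass to a limit,'' and you are right that the naive limiting argument leaves an error of order $(e^{\delta_k/h}-1)\sum_j\tilde{\Phi}_j^{\pm}(\vx_k)$ which is not controlled when near-maximizers escape to infinity, precisely because of the polynomial growth of \eqref{newR}. Your penalization route does close this, and in fact closes it exactly, if you choose the barrier to be the mass function $\Psi(\vxi)=\vm\cdot\vxi$ from \eqref{mb_j}: it is coercive on $\omp$ by \eqref{322mm}, and its increments along every reaction direction vanish since $\vm\cdot\vec{\nu}_j=0$, so the maximizer $\vx_\eps$ of $(u_{\V}-v_{\V})-\eps\Psi$ over $\omp$ is a genuine directional maximizer of $u_{\V}-v_{\V}$ itself; the attained-max computation then gives $(u_{\V}-v_{\V})(\vx_\eps)\leq\sup(f_{\V}-g_{\V})$, and letting $\eps\downarrow 0$ pointwise yields $\sup_{\omp}(u_{\V}-v_{\V})\leq\sup(f_{\V}-g_{\V})$, while on $\omn$ one has $u_{\V}-v_{\V}=f_{\V}-g_{\V}$ by the zero extension \eqref{exPHI}. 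Do note that this repair (and likewise your far-field alternative) uses the positive mass vector of \eqref{mb_j} (respectively, data with constant far-field limits as in $\ell^\8_{0*}$), hypotheses not stated in Lemma \ref{lem_nonexp_d} itself but present wherever the paper actually invokes it (Proposition \ref{prop:cp}, Theorem \ref{thm:backwardEC}); so your proposal is complete in the regime where the lemma is used, and is more candid than the paper's one-line dismissal about what needs to be argued when the supremum is not attained.
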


\begin{proof}
First, denote $\vxi^*$ as a maximum point such that $\max(u_{\V}-v_{\V})=(u_{\V}-v_{\V})(\vxi^*)$. Here, without loss of generality, we assume the maximum is attained; otherwise, the proof is still true by passing to a limit. Then
\begin{equation}\label{mp1}
\begin{aligned}
 (u_{\V}-v_{\V})(\vxi^*)  = & \frac{1}{h}\sum_{j=1, \,\vxi+ \vec{\nu}_{j}h\geq 0}^M \tilde{\Phi}^+_j(\vxi^* ) e^{\xi_i} [(u_{\V}-v_{\V})(\vxi^*+ {\vec{\nu}_j}h) - (u_{\V}-v_{\V})(\vxi^* )] \\
  & + \frac{1}{h}\sum_{j=1, \,\vxi-\vec{\nu}_{j}h\geq 0}^M \tilde{\Phi}^-_j(\vxi^* ) e^{\xi_i'} [(u_{\V}-v_{\V})(\vxi^*-{\vec{\nu}_j}h)-(u_{\V}-v_{\V})(\vxi^*)] + (f_{\V}-g_{\V})(\vxi^*)\\
   \leq& (f_{\V}-g_{\V})(\vxi^*),
\end{aligned}
\end{equation}
where $\xi_i, \xi_i'$ are some constants from the mean value theorem.
This immediately gives
\begin{equation}
\max (u_{\V}-v_{\V}) \leq \max(f_{\V}-g_{\V}).
\end{equation}

Then if $f_{\V}\leq g_{\V}$, we have $u_{\V}\leq v_{\V}$, which concludes (i).
Notice also the identity
\begin{equation}
\|u_{\V}-v_{\V}\|_{\infty} = \max\{ \max(u_{\V}-v_{\V}), \max(v_{\V}-u_{\V})\} \leq \max\{ \max(f_{\V}-g_{\V}), \max(f_{\V}-g_{\V})\}=\|f_{\V}-g_{\V}\|_{\infty},
\end{equation}
which concludes (ii).
\end{proof}

The property (ii) shows that the resolvent $J_{\Delta t, h} = (I - \Delta t H_{\V})^{-1}$ is nonexpansive. Thus, we also obtain that $-H_{\V}$ is an accretive operator on $\ell^\infty(\Omega_{\V}).$

\subsubsection{Existence and uniqueness of the backward Euler scheme via the Perron method}

We next prove the following lemma to ensure the existence and uniqueness of a solution $\psih^{n}$ to \eqref{bEuler}, i.e., solving the resolvent problem \eqref{dis_resolvent}. The proof of this lemma follows the classical Perron method and the observation that a constant is always a solution to \eqref{upwind}. 
\begin{lem}\label{lem:perron}
Let $\lambda>0$. Assume there exist constants $K_M:=\sup_i f(\vxi)$ and $K_m:= \inf_i f(\vxi)$. Then there exists a unique solution $\uh$ to
\begin{equation}\label{dis_exist}
\uh(\vxi) - \lambda  H_{\V}(\vxi, \uh(\vxi), \uh) = f(\vxi), \quad \vxi\in\Omega_{\V}.
\end{equation}
We also have $K_m\leq \psih \leq K_M$.
\end{lem}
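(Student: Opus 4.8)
The plan is to read off uniqueness from the nonexpansive estimate already in hand, and to obtain existence by Perron's method, exploiting crucially that every constant $c$ satisfies $H_{\V}(\vxi,c,c)=0$ (a constant solves \eqref{upwind}) together with the monotonicity \eqref{mono}.

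Uniqueness is immediate: if $\uh$ and $v_{\V}$ both solve \eqref{dis_exist} with the same right-hand side $f$, then $\uh=J_{\lambda,h}f=v_{\V}$ by Lemma~\ref{lem_nonexp_d}(ii). For existence, call $v\colon\Omega_{\V}\to\bR$ a subsolution if $v(\vxi)-\lambda H_{\V}(\vxi,v(\vxi),v)\le f(\vxi)$ for all $i$, and a supersolution if the reverse inequality holds. Then the constant function $K_m$ is a subsolution and $K_M$ a supersolution (using $H_{\V}(\vxi,c,c)=0$ and $K_m\le f\le K_M$). Set
\[
\uh(\vxi):=\sup\{\,v(\vxi):\ v\text{ a subsolution with }K_m\le v\le K_M\,\},
\]
so that $K_m\le\uh\le K_M$ by construction. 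To see that $\uh$ is itself a subsolution I would argue along the classical lines: the pointwise maximum of two subsolutions is again a subsolution (since $H_{\V}$ is nondecreasing in its third argument and $\lambda>0$); since $\Omega_{\V}$ is countable, a diagonal argument produces an increasing sequence of subsolutions converging to $\uh$ pointwise; and for each fixed node $H_{\V}(\vxi,\cdot,\cdot)$ is continuous in the finitely many relevant values $v(\vxi)$, $v(\vxi\pm\vec{\nu}_j h)$ (it is a finite sum of exponentials), so one may pass to the limit in the subsolution inequality.

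The crux is to show that $\uh$ is also a supersolution. Suppose $\uh(\vec{x}_{i_0})-\lambda H_{\V}(\vec{x}_{i_0},\uh(\vec{x}_{i_0}),\uh)<f(\vec{x}_{i_0})$ at some node $\vec{x}_{i_0}$. First, $\uh(\vec{x}_{i_0})<K_M$: otherwise, since $\uh\le K_M$ and $H_{\V}$ is nondecreasing in the third slot, $K_M-\lambda H_{\V}(\vec{x}_{i_0},K_M,\uh)\ge K_M-\lambda H_{\V}(\vec{x}_{i_0},K_M,K_M)=K_M\ge f(\vec{x}_{i_0})$, contradicting strictness. Then bump $\uh$ up by a small $\delta>0$ at $\vec{x}_{i_0}$ alone; because $\vec{\nu}_j\neq 0$, the surrounding values are untouched, so at $\vec{x}_{i_0}$ the scheme depends continuously on $\delta$ and the strict inequality persists for $\delta$ small, while at every other node the bumped function dominates $\uh$ and monotonicity in the third slot preserves the subsolution inequality. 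For $\delta$ small the bumped function lies in $[K_m,K_M]$ and is a subsolution strictly larger than $\uh$ at $\vec{x}_{i_0}$, contradicting maximality. Hence $\uh$ solves \eqref{dis_exist}; and since $H_{\V}(\vxi,\uh(\vxi),\uh)=\lambda^{-1}\bbs{\uh(\vxi)-f(\vxi)}$ is bounded, automatically $\uh\in D(H_{\V})$.

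I expect the main obstacle to be conceptual rather than computational: because the LMA forces the coefficients $\tilde\Phi^{\pm}_j$ to grow polynomially, $H_{\V}$ is genuinely unbounded on $\ell^{\infty}(\Omega_{\V})$, so off-the-shelf accretive-operator existence theory does not apply directly — which is precisely why the pointwise Perron construction, which only ever evaluates $H_{\V}$ at one grid point at a time, is the appropriate tool, and why the identities $H_{\V}(\vxi,c,c)=0$ and the monotonicity \eqref{mono} carry the argument by keeping the competitor functions trapped in $[K_m,K_M]$.
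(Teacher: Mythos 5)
Your proposal is correct and follows essentially the same route as the paper: constants $K_m,K_M$ as barriers (via $H_{\V}(\vxi,c,c)=0$), Perron's supremum over subsolutions, a diagonal/continuity argument at each grid point to show the sup is a subsolution, a pointwise bump plus monotonicity \eqref{mono} to show it is a supersolution, and uniqueness from the nonexpansive estimate of Lemma \ref{lem_nonexp_d}. Your handling of the boundary case $\uh(\vec{x}_{i_0})=K_M$ via monotonicity in the third slot is just a minor variant of the paper's maximum-point argument, so there is nothing substantive to add.
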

\begin{proof}
Step 1. We define a discrete subsolution $\uh$ to \eqref{dis_exist} as
\begin{equation}
  \uh(\vxi) - \lambda  H_{\V}(\vxi, \uh(\vxi), \uh) \leq  f(\vxi), \quad \vxi\in\Omega_{\V}.
\end{equation}
Since $H_{\V}$ satisfies the monotone scheme condition \eqref{mono}, we can directly verify that the above discrete subsolution definition is consistent with the viscosity subsolution; see the definition in Appendix \ref{app3}. Supersolution is defined by reversing the inequality.

Based on this definition, we can directly verify that $K_M$ is a supersolution while $K_m$ is a subsolution.
Denote
\begin{equation}
\mathcal{F}:=\{v; \quad v \text{ is a subsolution to \eqref{dis_exist}}, \, v\leq K_M\}.
\end{equation}
Since $K_m\in\mathcal{F}$, $\mathcal{F}\neq \emptyset$.

Step 2. Define
\begin{equation}\label{def_u}
\uh(\vxi) := \sup \{ \vh(\vxi); \,\, v\in \mathcal{F}\}.
\end{equation}
We will prove that $\uh(\vxi)$ is a subsolution.
It is obvious that $\uh\geq K_m$. Since $\Omega_{\V}$ is countable, by the diagonal argument, there exists a subsequence $v_k\in\mathcal{F}$ such that for any $\vxi$, $v_k(\vxi) \to \uh(\vxi)$ as $k\to+\infty$.
Then, from the continuity of $H_{\V}$, for each $\vxi\in \Omega_{\V}$, taking $k\to +\infty$, we have
\begin{equation}\label{217}
\uh(\vxi)- \lambda  H_{\V}(\vxi, \uh(\vxi), \uh) \leq  f(\vxi).
\end{equation}

Step 3. We now prove that $\uh$ is also a supersolution. If not, there exists a point $x^*$ such that
\begin{equation}\label{218}
\uh(x^*)- \lambda  H_{\V}(x^*, \uh(x^*), \uh) < f(x^*).
\end{equation}

First, if $\uh(x^*)=K_M$, then from the definition of $\uh$, $x^*$ is a maximal point for $\uh$. Then $H_{\V}(x^*, \uh(x^*), \uh)\leq 0$, which contradicts with $f\leq K_M$.

Therefore, we know that $\uh(x^*)<K_M$. There exists $\epsilon>0$ such that $\uh(x^*)+\epsilon < K_M$. Define 
\begin{equation}
\vh(\vxi):= \left\{ \begin{array}{ll}
\uh(x^*)+\epsilon, & \quad \vxi=x^*;\\
\uh(\vxi), & \quad \text{otherwise}. 
\end{array} 
\right.
\end{equation}
For $\vxi=x^*$, from \eqref{218}, by the continuity of $H_{\V}$, taking $\epsilon$ small enough, we have
\begin{equation}
\uh(x^*)+\epsilon- \lambda  H_{\V}(x^*, \uh(x^*)+\epsilon, \uh) < f(x^*).
\end{equation}
For $\vxi\neq x^*$, since $\uh\leq \vh$, from \eqref{217} and the monotone condition \eqref{mono}, we have
\begin{equation}
\vh(\vxi)-  \lambda H_{\V}(\vxi, \vh(\vxi), \vh) \leq \uh(\vxi)- \lambda  H_{\V}(\vxi, \uh(\vxi), \uh) \leq  f(\vxi).
\end{equation}
Therefore, $\vh\in \mathcal{F}$ is a subsolution that is strictly larger than $\uh$. This is a contradiction. In summary, $\uh$ constructed in \eqref{def_u} is a solution. Uniqueness is directly from the construction in \eqref{def_u}. 
\end{proof}

\subsubsection{Construct barriers to control the polynomial growth of intensity $\tilde{\Phi}_j^\pm(\vxi)$}
Below, under a slight assumption of the conservation of total mass, we provide more contraction estimates on the solution to \eqref{dis_exist}. 

Assume there exists a componentwisely positive mass vector $\vm$  such that \eqref{mb_j} holds.
Under the mass balance assumption \eqref{mb_j}, for $m_1=\min_{i=1}^N m_i$, we have
\begin{equation}\label{322mm}
m_1 \|\vx\| \leq m_1 \|\vx\|_1 \leq  \vm \cdot \vx  \leq \|\vm\| \|\vx\|, \quad \vx\in\mathbb{R}^N_+,
\end{equation}
where $\|\vx\|=\sqrt{x_1^2+\cdots + x_N^2},$ and $\|\vx\|_1 = \sum_i x_i$. In fact, since $x_i>0$ and $m_i>0$, we have $\|\vx\| \leq \|\vx\|_1$ and $m_1\|\vx\|_1 \leq \vm \cdot \vx.$

\begin{prop}\label{prop:cp}
Let $f\in \ell^\8$ be RHS in the resolvent problem \eqref{dis_exist}. Assume there exists positive $\vm$ satisfying \eqref{mb_j}.
Define 
\begin{equation}
f_m(r):= \inf_{|\vxi|\geq r, \vxi\in\omp} f(\vxi),\quad f_M(r):= \sup_{|\vxi|\geq r, \vxi\in\omp} f(\vxi).
\end{equation}
Define 
\begin{equation}
u_m(\vxi):= \left\{ \begin{array}{cc}
f_m\bbs{\frac{ \vm \cdot \vxi }{\|m\|}}, \, & \vxi\in\omp,\\
f(\vxi),\, & \vxi \in \omn,
\end{array}
\right. \quad
u_M(\vxi):= \left\{ \begin{array}{cc}
f_M\bbs{\frac{ \vm \cdot \vxi }{m_1}}, \, & \vxi\in\omp,\\
f(\vxi),\, & \vxi \in \omn.
\end{array}
\right.
\end{equation}
Then
\begin{enumerate}[(i)]
\item 
 we have the estimate
\begin{equation}\label{fmM}
u_m(\vxi)  \leq f(\vxi) \leq   u_M(\vxi), \quad \forall \vxi\in \Omega_{\V};
\end{equation}
\item $u_m(\vxi)$ and $u_M(\vxi)$ are two stationary solutions to HJE
\begin{equation}\label{ssHJE}
H(\vxi, u_m(\vxi), u_m) = 0, \quad H(\vxi, u_M(\vxi), u_M) = 0;
\end{equation}
\item we have the barrier estimates for the solution $\uh$ to \eqref{dis_exist}
\begin{equation}
u_m(\vxi) \leq \uh(\vxi) \leq u_M(\vxi);
\end{equation}
\item if $f$ satisfies $f\equiv \text{const}$ for $|\vxi|>R$ and $\vxi\in\omp$, then 
   $\uh(\vxi)$ satisfies $\uh\equiv \text{const}$ for $|\vxi|> \frac{\|\vm\|R}{m_1}$ and $\vxi\in \omp$.
\end{enumerate}
\end{prop}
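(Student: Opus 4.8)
The plan is to establish the four assertions in order; (ii) is the conceptual heart, and the rest follows either directly from the norm comparison \eqref{322mm} or from the comparison already built into Lemmas \ref{lem_nonexp_d} and \ref{lem:perron}. For (i), I would use nothing beyond \eqref{322mm}, which for $\vxi\in\omp$ reads $\frac{\vm\cdot\vxi}{\|\vm\|}\le|\vxi|\le\frac{\vm\cdot\vxi}{m_1}$. Consequently $\vxi$ itself lies in the admissible set over which the infimum defining $u_m(\vxi)$ (resp. the supremum defining $u_M(\vxi)$) is taken, so that infimum is $\le f(\vxi)$ and that supremum is $\ge f(\vxi)$; on $\omn$ the three functions coincide by definition. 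This gives \eqref{fmM}.

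For (ii), the key observation is that on $\omp$ both barriers have the form $g(\vm\cdot\vxi)$ for a monotone function $g$ of one real variable. Fix $\vxi\in\omp$ and an index $j$ with $\vxi+\vec{\nu}_jh\ge0$: then $\vxi+\vec{\nu}_jh\in\omp$, and by the mass balance \eqref{mb_j} we have $\vm\cdot(\vxi+\vec{\nu}_jh)=\vm\cdot\vxi+h\,\vec{\nu}_j\cdot\vm=\vm\cdot\vxi$, whence $u_m(\vxi+\vec{\nu}_jh)=u_m(\vxi)$; likewise $u_m(\vxi-\vec{\nu}_jh)=u_m(\vxi)$ when $\vxi-\vec{\nu}_jh\ge0$, and the same for $u_M$. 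Substituting into \eqref{def_Hh}, every exponent $\bbs{u_m(\vxi\pm\vec{\nu}_jh)-u_m(\vxi)}/h$ vanishes, so $H_{\V}(\vxi,u_m(\vxi),u_m)=\sum_j\tilde{\Phi}_j^+(\vxi)(e^{0}-1)+\sum_j\tilde{\Phi}_j^-(\vxi)(e^{0}-1)=0$ on $\omp$; on $\omn$ the extension \eqref{exPHI} makes $\tilde{\Phi}_j^\pm\equiv0$, so $H_{\V}\equiv0$ there too, and \eqref{ssHJE} follows. The subtlest point, and the one I would be most careful about, is that this computation is valid up to and including the boundary $\partial\omp$: the `no reaction' constraints $\vxi\pm\vec{\nu}_jh\ge0$ in \eqref{def_Hh} are exactly what keep the neighbours inside $\omp$, so $g$ is always evaluated where it is defined and the invariance $\vec{\nu}_j\cdot\vm=0$ genuinely annihilates each term regardless of how large the polynomially growing coefficient $\tilde{\Phi}_j^\pm(\vxi)$ is.

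For (iii), combine (i) and (ii): by (ii), $u_m(\vxi)-\lambda H_{\V}(\vxi,u_m(\vxi),u_m)=u_m(\vxi)$, which by (i) is $\le f(\vxi)$, so $u_m$ is a subsolution of the resolvent problem \eqref{dis_exist}; symmetrically $u_M$ is a supersolution; and both lie in $\ell^\8(\Omega_{\V})$ with $u_m\le f\le u_M\le K_M$. Since $u_m\le K_M$, $u_m$ belongs to the class $\mathcal{F}$ in the proof of Lemma \ref{lem:perron}, hence $u_m\le\uh$. For the upper bound I would re-run that Perron construction with the supersolution $u_M$ in place of the constant cap $K_M$ — legitimate because $u_M\ge u_m$ keeps the class nonempty and the only property of the cap used in the bump-up step is that it is a supersolution — which by the uniqueness in Lemma \ref{lem:perron} again produces $\uh$, now with $\uh\le u_M$ by construction. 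Equivalently one compares directly at an (essential) maximum point $\vxi^*$ of $\uh-u_M$ exactly as in \eqref{mp1}, subtracting the supersolution inequality for $u_M$ from the equation for $\uh$ at $\vxi^*$ and using $\uh(\vxi^*\pm\vec{\nu}_jh)-\uh(\vxi^*)\le u_M(\vxi^*\pm\vec{\nu}_jh)-u_M(\vxi^*)$ together with $\tilde{\Phi}_j^\pm\ge0$ (and \eqref{mono}) to get $H_{\V}(\vxi^*,\uh(\vxi^*),\uh)\le H_{\V}(\vxi^*,u_M(\vxi^*),u_M)$, hence $(\uh-u_M)(\vxi^*)\le0$; the non-attained case is handled by the limiting remark in Lemma \ref{lem_nonexp_d}.

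Finally, for (iv): if $f\equiv c$ on $\{|\vxi|>R\}\cap\omp$, then for every $r>R$ the admissible sets in the definitions of $f_m(r)$ and of $f_M(r)$ contain only points where $f=c$, so $f_m(r)=f_M(r)=c$. Hence $u_m(\vxi)=u_M(\vxi)=c$ as soon as the argument fed to $f_m$ (resp. $f_M$) exceeds $R$, and by \eqref{322mm} that argument is at least $m_1|\vxi|/\|\vm\|$, so both barriers equal $c$ on $\{|\vxi|>\|\vm\|R/m_1\}\cap\omp$; part (iii) then forces $\uh\equiv c$ there. The only real work is (ii) and the comparison bookkeeping in (iii); the polynomial growth of the intensities $\tilde{\Phi}_j^\pm$ — precisely the obstruction to the abstract monotone-scheme convergence theorems — is sidestepped here because the mass barriers annihilate the discrete Hamiltonian identically.
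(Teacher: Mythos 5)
Your parts (ii), (iii), (iv) are fine, but the step you yourself single out in (i) for the upper barrier is exactly the one that fails. You claim that $\vxi$ lies in the admissible set of the supremum defining $u_M(\vxi)=f_M\bbs{\vm\cdot\vxi/m_1}$; but on $\omp$ the inequality \eqref{322mm} goes the other way, $|\vxi|\le \vm\cdot\vxi/m_1$, so in general $\vxi$ does \emph{not} satisfy the constraint $|\vyi|\ge \vm\cdot\vxi/m_1$ (membership holds only in degenerate situations, e.g.\ $N=1$). Your membership argument is valid only for $u_m$, whose admissible set $\{|\vyi|\ge \vm\cdot\vxi/\|\vm\|\}$ does contain $\vxi$ by Cauchy--Schwarz. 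Worse, with $u_M$ literally as defined the inequality $f\le u_M$ in \eqref{fmM} can fail: take $N=2$, $\vm=(1,2)$ (so $m_1=1$, $\|\vm\|=\sqrt5$), and $f$ equal to $10$ at the grid point $(0,1)$ and $0$ elsewhere; then $u_M((0,1))=f_M(2)=0<f((0,1))$. To be fair, this is a defect of the statement rather than of your overall strategy: the paper's own proof makes the same slip (it deduces $f_M(|\vxi|)\le f_M\bbs{\vm\cdot\vxi/m_1}$ from $f_M$ decreasing and $|\vxi|\le\vm\cdot\vxi/m_1$, which is the reverse direction), and the intended definition is evidently $u_M(\vxi)=f_M\bbs{\vm\cdot\vxi/\|\vm\|}$, i.e.\ with $\|\vm\|$ in place of $m_1$. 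With that repair your argument for (i) goes through verbatim, (ii) is unaffected (the barrier is still a function of $\vm\cdot\vxi$ alone), and in (iv) both barriers are still constant for $|\vxi|>\|\vm\|R/m_1$, so the stated radius survives. The correct move is to flag and fix the normalization, not to assert the false membership.

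Apart from this, your route coincides with the paper's. Part (ii) is the paper's argument: $\vec{\nu}_j\cdot\vm=0$ makes the barriers invariant along every admissible jump allowed by the constraints in \eqref{def_Hh}, so each exponent vanishes and $H_{\V}$ also vanishes on $\omn$ by \eqref{exPHI}; your remark that the `no reaction' constraints keep the neighbours in $\omp$ is the right point to stress. For (iii) the paper is shorter: by (ii), $u_m$ and $u_M$ are fixed points of the resolvent, $u_m-\lambda H_{\V}(\cdot,u_m(\cdot),u_m)=u_m$ and likewise for $u_M$, so the monotonicity (i) of Lemma \ref{lem_nonexp_d} applied to $u_m\le f\le u_M$ gives the two-sided bound at once. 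Your lower bound via the Perron class and your upper bound via a maximum-point comparison (or re-running Perron with cap $u_M$) are correct, but they essentially re-derive that resolvent monotonicity by hand; if you do re-run Perron with a different cap, note that the identification of the resulting function with $\uh$ rests on uniqueness of solutions to \eqref{dis_exist}, which comes from the nonexpansivity (ii) of Lemma \ref{lem_nonexp_d}.
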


\begin{proof}
First, $f_{M}(r)$ is decreasing in $r$ while $f_{m}(r)$ is increasing in $r$.
Thus, together with \eqref{322mm}, we know for $\vxi\in\omp$
\begin{align}
f(\vxi) \leq f_{M}( |\vxi| ) \leq f_{M}\bbs{\frac{ \vm \cdot \vx_i }{m_1}},\\
f(\vxi) \geq f_{m}( |\vxi| ) \geq f_{m}\bbs{\frac{ \vm \cdot \vx_i }{\|\vm\|}}.
\end{align}

Second,   $\vm$ satisfies \eqref{mb_j} implies
\begin{equation}
u_m(\vxi+\vec{\nu}_j h) = f_m\bbs{\frac{\vm \cdot \vxi + \vec{\nu}_j \cdot \vm h}{\|m\|}} = u_m(\vxi), \quad \vxi\in\omp \text{ and } \vxi + \vec{\nu}_j   h \in\omp.
\end{equation}
Thus from the definition of $H_h$ in \eqref{def_Hh}, we conclude (ii).

Third,  from the monotonicity (i) in Lemma \ref{lem_nonexp_d} and \eqref{ssHJE}, we know the comparison $u_m(\vxi)\leq f(\vxi)\leq u_M(\vxi)$ in \eqref{fmM} implies $u_m(\vxi) \leq \uh(\vxi)\leq u_M(\vxi)$. Thus we have conclusion (iii) and thus (iv).
\end{proof}

\subsection{Existence and uniqueness of semigroup solution to the backward equation}\label{sec:ext_dd}

Combining the existence results and barrier estimates in Section \ref{sec:upwind}, we obtain the existence and uniqueness of the solution to the backward Euler scheme \eqref{bEuler}. This, together with the nonexpansive property of the resolvent $J_{\Delta t, h} = (I - \Delta t H_{\V})^{-1}$ in Lemma \ref{lem_nonexp_d}, implies that $-H_{\V}$ is a maximal accretive operator. Precisely, we first use the one-point Alexandroff compactification $\Omega_{\V}^*=\Omega_{\V} \cup \{\infty\}$ of $\Omega_{\V}$ \cite{Kelley} to define the Banach space
\begin{equation}
\ellb:= \{(\uh(\vxi))\in \ell^\8(\Omega_{\V});\,\, \uh(\vxi) \to \text{const as }|\vxi|\to \infty \}
\end{equation}
and a subspace
\begin{equation}
\ellc:= \{(\uh(\vxi))\in \ell^\8(\Omega_{\V});\,\, \uh(\vxi) = \text{const for } |\vxi|>R \text{ for some }R>0\} .
\end{equation}
Define
\begin{equation}\label{HHn}
H_{\V}: D(H_{\V}) \subset \ellb \to \ellb; \qquad  ( u(\vxi) )_{\vxi\in\Omega_{\V}} \mapsto ( H_{\V}(\vxi, u(\vxi), u(\cdot)) )_{\vxi\in\Omega_{\V}}
\end{equation}
The abstract domain of $H_{\V}$ is not straightforward to characterize due to the polynomial growth of $\tilde{\Phi}^{\pm}_j(\vxi)$ in $H_{\V}$. However, for $\uh\in \ellc$, the polynomial growth at far field does not turn on because $\uh=\text{const}$ for large $|\vxi|$. Hence, $\ellc\subset D(H_{\V})$. Since $\overline{\ellc} = \ellb$, we conclude that $H_{\V}$ is densely defined.
Thus, we obtain the following theorem:

\begin{thm}\label{thm:backwardEC}
Let $H_{\V}$ be the discrete Hamiltonian operator defined in \eqref{HHn}. Assume there exists a (componentwise) positive $\vm$ satisfying \eqref{mb_j}. Let $u^0\in \ellb$, then there exists a unique global solution $\uh(t,\vxi)\in C([0,\infty); \ellb)$ to \eqref{upwind}, and $\uh(t,\vxi)$ satisfies
\begin{enumerate}[(i)]
\item contraction:
\begin{equation}\label{contraction}
\inf_{\vxi\in\Omega_{\V}} u^0(\vxi) \leq \inf_{\vxi\in\Omega_{\V}} \uh(t,\vxi) \leq \sup_{\vxi\in\Omega_{\V}} \uh(t,\vxi) \leq \sup_{\vxi\in\Omega_{\V}} u^0(\vxi);
\end{equation}
\item for all $\vxi\in \Omega_{\V}$, $\uh(\cdot,\vxi)\in C^1(0,+\8);$
\item 
Moreover, if $u^0\in \ellc$   then $\uh(\cdot,\vxi)\in C^1[0,+\8)$ for all $\vxi\in \Omega_h.$
\end{enumerate}
\end{thm}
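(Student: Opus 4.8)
The plan is to realize the solution of \eqref{upwind} as the nonlinear semigroup generated by $-H_{\V}$ through \textsc{Crandall--Liggett}'s theorem on the Banach space $\ell^{\8}_{0*}(\Omega_{\V})$, and then to read off (i)--(iii) from the resolvent estimates already in hand (Lemmas \ref{lem_nonexp_d}, \ref{lem:perron}) together with the mass-function barriers of Proposition \ref{prop:cp}.

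First I would verify that $-H_{\V}$ is $m$-accretive and densely defined on $\ell^{\8}_{0*}(\Omega_{\V})$. Accretivity is exactly the nonexpansive bound \eqref{nonexp_d}: for each $\lambda>0$ the resolvent $J_{\lambda,h}=(I-\lambda H_{\V})^{-1}$ is single-valued and $1$-Lipschitz. For the range condition $\ran(I-\lambda H_{\V})\supseteq\ell^{\8}_{0*}(\Omega_{\V})$, Lemma \ref{lem:perron} solves the resolvent equation \eqref{dis_exist} for bounded data, Proposition \ref{prop:cp}(iv) shows $J_{\lambda,h}$ maps $\ell^{\8}_{c*}$ into itself, and for general $f\in\ell^{\8}_{0*}$ one takes $f_k\in\ell^{\8}_{c*}$ with $f_k\to f$, uses $1$-Lipschitzness to pass $u_k=J_{\lambda,h}f_k$ to a limit $u\in\overline{\ell^{\8}_{c*}}=\ell^{\8}_{0*}$, and passes to the limit in the componentwise identity $u_k(\vxi)-\lambda H_{\V}(\vxi,u_k(\vxi),u_k)=f_k(\vxi)$, which involves only finitely many uniformly bounded entries; the limit identity $H_{\V}(\vxi,u(\vxi),u)=\lambda^{-1}(u(\vxi)-f(\vxi))$ then shows $u\in D(H_{\V})$ with $J_{\lambda,h}f=u$. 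Density is immediate from $\ell^{\8}_{c*}\subseteq D(H_{\V})\subseteq\ell^{\8}_{0*}$ and $\overline{\ell^{\8}_{c*}}=\ell^{\8}_{0*}$. \textsc{Crandall--Liggett} then yields the strongly continuous contraction semigroup $S_tu^0=\lim_{n\to\8}J_{t/n,h}^{\,n}u^0\in C([0,+\8);\ell^{\8}_{0*}(\Omega_{\V}))$; passing $n\to\8$ in $\inf_\vxi u^0\le J_{t/n,h}^{\,n}u^0\le\sup_\vxi u^0$ (the bound $K_m\le\uh\le K_M$ of Lemma \ref{lem:perron}, iterated, using that constants are fixed points of $J_{\lambda,h}$) gives the contraction (i). Uniqueness of the solution of \eqref{upwind} inside $C([0,+\8);\ell^{\8}_{0*})$ follows by running the maximum-point/mean-value computation of Lemma \ref{lem_nonexp_d} for $w=\uh-\vh$ along the flow, which makes $t\mapsto\sup_\vxi(\uh-\vh)(\vxi,t)$ nonincreasing.

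The regularity statements (ii)--(iii) are where the real work lies, and the main obstacle is precisely that $H_{\V}$ is neither Lipschitz nor even bounded on $\ell^{\8}$-balls, because $\tilde{\Phi}^\pm_j(\vxi)$ grows polynomially; what defeats it is the conservation identity $\vm\cdot\vec{\nu}_j=0$. For $u^0\in\ell^{\8}_{c*}$, say $u^0\equiv c$ on $\{\vxi\in\omp:|\vxi|>R\}$, the barriers $u_m\le u^0\le u_M$ of Proposition \ref{prop:cp} are constant along every reaction direction, hence stationary solutions of \eqref{upwind} (Proposition \ref{prop:cp}(ii)); monotonicity (Lemma \ref{lem_nonexp_d}(i)) together with $J_{\lambda,h}u_m=u_m$, $J_{\lambda,h}u_M=u_M$ traps the orbit, $u_m\le S_tu^0\le u_M$ for all $t\ge0$, so $S_tu^0\equiv c$ on the reaction-invariant region $\{\vxi\in\omp:\vm\cdot\vxi>\|\vm\|R\}$, whose complement $\Lambda:=\{\vxi\in\omp:\vm\cdot\vxi\le\|\vm\|R\}$ is a \emph{finite} subset of $\{|\vxi|\le\|\vm\|R/m_1\}$. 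Because $\vm\cdot\vec{\nu}_j=0$, every active reaction neighbour of a point of $\Lambda$ again lies in $\Lambda$ (or in $\omn$, where the values are frozen at $u^0$), so on $\Lambda$ the system \eqref{upwind} closes into a finite autonomous ODE with real-analytic right-hand side and fixed finite coefficients; Cauchy--Lipschitz gives a unique local $C^\8$ solution, the barriers make it global, and extending it by $c$ on $\{\vm\cdot\vxi>\|\vm\|R\}$ and by $u^0$ on $\omn$ produces a classical solution of \eqref{upwind} on all of $\Omega_{\V}$ — the equation on the constant region holds trivially, again because $\vm\cdot\vec{\nu}_j=0$ keeps the active neighbours inside that region, where $\uh\equiv c$. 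By the uniqueness above this equals $S_tu^0$, giving $\uh(\cdot,\vxi)\in C^1[0,+\8)$ (indeed $C^\8$) for every $\vxi$, which is (iii).

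For general $u^0\in\ell^{\8}_{0*}$ I would approximate $u^0$ in $\ell^{\8}$ by $u^0_k\in\ell^{\8}_{c*}$; $1$-Lipschitzness gives $\|S_tu^0_k-S_tu^0\|_\8\le\|u^0_k-u^0\|_\8\to0$ uniformly in $t$. Each $\uh_k:=S_tu^0_k$ obeys the componentwise identity $\uh_k(\vxi,t)=u^0_k(\vxi)+\int_0^tH_{\V}(\vxi,\uh_k(\vxi,s),\uh_k(s))\ud s$, whose integrand depends on finitely many components, all bounded by $\sup_k\|u^0_k\|_\8<\8$, so the exponentials are uniformly bounded and dominated convergence yields the same identity for $\uh=S_tu^0$. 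Since $s\mapsto H_{\V}(\vxi,\uh(\vxi,s),\uh(s))$ is continuous on $(0,+\8)$ — the finitely many relevant components of $\uh(s)$ being continuous in $s$ by strong continuity of $S_s$ — the fundamental theorem of calculus gives $\uh(\cdot,\vxi)\in C^1(0,+\8)$ for all $\vxi$, which is (ii); the $C^1$-regularity up to $t=0$ is exactly the extra information supplied by the finite-ODE representation in case (iii). I expect the finite-dimensional reduction under the barriers — making it consistent simultaneously with the constant far field and with the Crandall--Liggett semigroup — to be the crux, with $\vm\cdot\vec{\nu}_j=0$ the load-bearing structural fact.
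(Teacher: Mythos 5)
Your overall skeleton is the same as the paper's: accretivity of $-H_{\V}$ from the nonexpansive resolvent bound of Lemma \ref{lem_nonexp_d}, the range condition from Lemma \ref{lem:perron} plus the barriers of Proposition \ref{prop:cp}, density of $\ell^\8_{c*}$ in $\ell^\8_{0*}$, and then Crandall--Liggett's exponential formula for existence, contraction and uniqueness. Where you genuinely diverge is in (ii)--(iii). The paper obtains $C^1$ regularity rather tersely, by passing to the limit in a componentwise integral identity along its approximation; you instead exploit $\vm\cdot\vec{\nu}_j=0$ twice more: the stationary barriers trap the orbit so that $S_tu^0\equiv c$ outside a sublevel set of the mass function, and since mass is invariant under every admissible jump, the dynamics on the finite set $\Lambda=\{\vxi\in\omp:\vm\cdot\vxi\le\|\vm\|R\}$ closes into a finite autonomous ODE with smooth right-hand side, giving a genuinely classical ($C^\8$ in $t$) solution for $\ell^\8_{c*}$ data and then $C^1$ regularity for general $\ell^\8_{0*}$ data by uniform approximation and dominated convergence in the integral identity. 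This finite-dimensional reduction is a clean and correct way to neutralize the polynomial growth of $\tilde{\Phi}^\pm_j$, and it makes the regularity step more explicit than the paper's; it also shows the componentwise $C^1$ regularity actually extends to $t=0$ for general initial data, which is stronger than, but consistent with, the statement.

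One step needs repair as written: the identification ``by the uniqueness above this equals $S_tu^0$.'' The uniqueness you proved compares two solutions of \eqref{upwind}, but at that point you do not yet know that $S_tu^0$ is a solution of \eqref{upwind} --- that is exactly what the later approximation argument is meant to establish, and that argument in turn uses the identification for $\ell^\8_{c*}$ data, so the reasoning is circular as stated. The gap is easily closed: either invoke the standard fact that a classical solution of an evolution governed by an m-accretive operator is an integral solution and hence coincides with the Crandall--Liggett semigroup, or compare your classical solution $v$ directly with the backward Euler iterates (on the finite set $\Lambda$, $v$ has bounded second time derivatives on compact intervals, so the local truncation error is $O(\Delta t^2)$ and the nonexpansivity of $J_{\Delta t,h}$ gives $\|v(t)-J_{\Delta t,h}^{[t/\Delta t]}u^0\|_\8\le C(t)\Delta t\to 0$), or bypass the semigroup altogether by building the general solution as the uniform limit of the classical solutions $v_k$, which are Cauchy by your own comparison estimate. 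With any of these fixes the proposal is complete.
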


\begin{proof}
 Assume $u^0\in \ellb = \overline{ D(H_{\V}) } $. Then fix $\Delta t$, from the existence and uniqueness of the resolvent problem in
Lemma \ref{lem:perron} and the barrier estimate in Proposition \ref{prop:cp}, we know for any $f\in \ellb$, there exists a unique solution $\uh\in \ellb$ to the resolvent problem  
$$\uh(\vxi) - \Delta t  H_{\V}(\vxi, \uh(\vxi), \uh) = f(\vxi).$$
Hence we know $H_{\V}(\uh)\in \ellb$ and thus $\uh \in D(H_h)$. This gives the maximality of $-H_h$. Besides, the accretivity of $-H_h$ is given by the non-expansive property of the resolvent in Lemma \ref{lem_nonexp_d}.

   Thus  $-H_{\V}$ is a maximal accretive operator on Banach space $\ellb$, then by \textsc{Crandall and Liggett} \cite{CrandallLiggett}, there exists a unique global contraction $C^0$-semigroup solution $\uh(t,\vxi)\in C([0,+\8); \ellb)$  to \eqref{upwind}
\begin{equation}\label{semi27}
\uh(t,\cdot):=\lim_{\Delta t \to 0} (I - \Delta t H_{\V})^{-[\frac{t}{\Delta t}]} u^0=: S_{h,t}u^0.
\end{equation}
Here $[\frac{t}{\Delta t}]$ is the integer part of $\frac{t}{\Delta t}$.
   The contraction \eqref{contraction} follows from the nonexpansive property of $J_{\Delta t, h}$ in Lemma \ref{lem_nonexp_d}. 
For any $\vxi\in\Omega_{\V}$,   we have
\begin{equation}
\uh(t,\vxi) - u (0,\vxi) = \int_0^t H_{\V}(\vxi, \uh(s, \vxi), \uh(s)) \ud s.
\end{equation}
Therefore we conclude $\uh(\cdot, \vxi)\in C^1(0,+\8)$ is the solution to \eqref{upwind}. For $u^0\in \ellc$, $u^0\in D(H_{\V})$, thus the $C^1$ regularity includes $t=0.$ 
\end{proof}

\begin{rem}
We remark that since the solution to \eqref{upwind} translated by a constant is still a solution, the nonexpansive property for time-continuous monotone schemes is equivalent to the monotonicity of the solution, as shown by Crandall and Tartar's lemma \cite{Crandall_Tartar_1980}. Precisely, given an initial data $\psih^0$, denote the numerical solution computed from the monotone scheme \eqref{upwind} as
\begin{equation}
\psih(t, \cdot) = S_t \psih^0.
\end{equation}
Then we know that $S_t$ is invariant under a translation by a constant, i.e., for any constant $c$ and any function $\uh\in \ell^\8(\Omega_{\V})$,
\begin{equation}\label{invariant}
S_t(\uh + c) = S_t \uh + c.
\end{equation}
Moreover, this semigroup satisfies the following properties:
\begin{enumerate}[(i)]
\item Monotonicity: if $\uh \geq \vh$, then $S_t \uh \geq S_t \vh$;
\item $\ell^\8$ nonexpansive:
\begin{equation}
\|S_t \uh - S_t \vh\|_{\8} \leq \|\uh-\vh\|_{\8}.
\end{equation}
\end{enumerate}
Indeed, first, from the definition of $S_t$, if $\psih$ is a solution to \eqref{upwind}, then $\psih+c$ is also a solution. Thus, \eqref{invariant} holds. Second, similar to \eqref{mp1}, we have (ii). Then, by Crandall and Tartar's lemma \cite{Crandall_Tartar_1980}, under \eqref{invariant}, (i) is equivalent to (ii).
\end{rem}

\subsection{Existence, comparison principle and exponential tightness of CME for reversible case}\label{sec:db_d}

In this section, under the assumption of a positive reversible invariant measure $\pi_{\V}$ satisfying \eqref{master_db_n}, we focus on recovering the existence and exponential tightness of the CME from the backward equation.

\subsubsection{CME Reversibility Condition}

We first review the CME reversibility condition. Since the jumping process with $Q_{\V}$ only distinguishes the same reaction vector $\vec{\xi}$, we rearrange all $j$ such that $\vec{\nu}_j=\pm\vec{\xi}$ and define the grouped fluxes as
\begin{equation}\label{group_flux}
\tilde{\Phi}^+_{\xi}(\vx) := \sum_{j: \vec{\nu}_j = \vec{\xi}} \tilde{\Phi}^+_j(\vx) + \sum_{j: \vec{\nu}_j = -\vec{\xi}} \tilde{\Phi}^-_j(\vx), \quad \tilde{\Phi}^-_{\xi}(\vx) := \sum_{j: \vec{\nu}_j = \vec{\xi}} \tilde{\Phi}^-_j(\vx) + \sum_{j: \vec{\nu}_j = -\vec{\xi}} \tilde{\Phi}^+_j(\vx).
\end{equation}
Then the CME \eqref{rp_eq} can be recast as
\begin{equation}\label{rp_eq_nn}
\begin{aligned}
\frac{\partial}{\partial t} p_{\V}(\vxi, t) = &\frac{1}{h}\sum_{\vec{\xi}, \vxi-\vec{\xi} h\geq 0} \left(\tilde{\Phi}^+_\xi(\vxi-\vec{\xi} h) p_{\V}(\vxi-\vec{\xi} h, t) - \tilde{\Phi}^-_\xi(\vxi) p_{\V}(\vxi, t)\right) \\
&+ \frac{1}{h}\sum_{\vec{\xi}, \vxi+h\vec{\xi} \geq 0} \left(\tilde{\Phi}^-_\xi(\vxi+\vec{\xi} h) p_{\V}(\vxi+\vec{\xi} h, t) - \tilde{\Phi}_\xi^+(\vxi) p_{\V}(\vxi, t)\right), \quad \vxi\in\omp.
\end{aligned}
\end{equation}
Again, $\frac{\partial}{\partial t} p_{\V}(\vxi, t) = 0$ for $\vxi\in\omn$. Therefore, for any $\vxi$ and $\vec{\xi}$, the reversibility condition for the "effective stochastic process" means that the total forward probability steady flux from $\vxi$ to $\vxi+\vec{\xi}h$ equals the total backward one. In other words, for $\vxi\in\omp$ and $\vxi+ \vec{\xi}_{j}h\in\omp$,
\begin{equation}\label{master_db_n}
\begin{aligned}
\tilde{\Phi}^-_{\xi}(\vxi+ \vec{\xi}_{j}h)     \pi_{\V}(\vxi+ {\vec{\xi}_{j}}h) = \tilde{\Phi}^+_{\xi}(\vxi)\pi_{\V}(\vxi), \quad \forall \vec{\xi}.
\end{aligned}
\end{equation}
We call this as a reversible condition   for CME; a.k.a Markov chain detailed balance, c.f. \cite{Joshi}.    It is well known that under a chemical version of constrained detail balance condition, the invariant measure $\pi_{\V}$ is given by the product Poisson and  the limit of $\psihs(\vxi)=- h \log \pi_{\V}(\vx_{\V})$ gives a convex viscosity solution to stationary HJE; see  for example, \cite[Ch7, Thm 3.1 and eq. (1) in Sec 7.4]{Whittle}, \cite[Theorem 4.5]{Anderson_Craciun_Kurtz_2010}, \cite[(7.30)]{QianBook}, \cite[Section 3]{GL22}.

Under the reversible condition for CME \eqref{master_db_n}, due to the flux grouping degeneracy brought by the same reaction vector, we no longer have an explicit invariant distribution. In fact, in some non-equilibrium enzyme reactions that satisfy the reversible condition \eqref{master_db_n}, the macroscopic energy landscape $\psi^{ss}$ becomes nonconvex with multiple minima \cite[Section 4]{GL22}. These multiple stable states are known as non-equilibrium steady states (NESS), which were pioneered by Prigogine \cite{prigogine1967introduction}. Multiple steady states, nonzero steady state fluxes, and positive entropy production rates at NESS are three distinguished features of non-equilibrium reactions. 

In the following, we provide a rigorous proof for the existence and uniqueness of the limiting energy landscape $\psi^{ss}$ under the reversible condition for CME \eqref{master_db_n}.

\subsubsection{Existence and Comparison Principle for Reversible CME}

In this section, under the assumption that the CME satisfies the reversible condition \eqref{master_db_n}, we prove the existence and comparison principle for the solution to the CME \eqref{rp_eq}.

\begin{lem}
Assume there exists a positive invariant measure $\pi_{\V}>0$ in $\omp$ for the CME \eqref{rp_eq}, and $\pi_{\V}$ satisfies the reversible condition \eqref{master_db_n}. Then, we have
\begin{equation}
Q_{\V}^* p_{\V} = \left\{ 
\begin{array}{cc}
\pi_{\V} Q_{\V} \left(\frac{p_{\V}}{\pi_{\V}}\right), & \vxi\in\omp, \\
0, & \vxi\in \omn.
\end{array} 
\right.
\end{equation}
\end{lem}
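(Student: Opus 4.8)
The plan is to verify the identity pointwise on $\Omega_{\V}$. For $\vxi\in\omn$ there is nothing to prove: by the second line of \eqref{rp_eq}, $Q^*_{\V}p_{\V}(\vxi)=\frac{\ud}{\ud t}p_{\V}(\vxi,t)=0$, while $Q_{\V}$ was set to vanish on $\omn$ in Lemma \ref{lem:generator}, so the right side is $0$ as well. Thus the content is the case $\vxi\in\omp$, and there I would work in the grouped-flux form, since the reversibility hypothesis \eqref{master_db_n} is phrased through the grouped fluxes $\tilde{\Phi}^\pm_\xi$ of \eqref{group_flux} and the corresponding CME is the regrouped equation \eqref{rp_eq_nn} (which the text already identifies with \eqref{rp_eq}).

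First I would regroup the backward generator of Lemma \ref{lem:generator} by collecting all channels $j$ with $\vec{\nu}_j=\pm\vec{\xi}$; using \eqref{group_flux} this gives, for $\vxi\in\omp$,
\[
Q_{\V}w(\vxi)=\frac1h\sum_{\vec{\xi},\, \vxi+\vec{\xi}h\geq0}\tilde{\Phi}^+_\xi(\vxi)\bbs{w(\vxi+\vec{\xi}h)-w(\vxi)}+\frac1h\sum_{\vec{\xi},\, \vxi-\vec{\xi}h\geq0}\tilde{\Phi}^-_\xi(\vxi)\bbs{w(\vxi-\vec{\xi}h)-w(\vxi)}.
\]
Since $\pi_{\V}>0$ on $\omp$, the ratio $w:=p_{\V}/\pi_{\V}$ is well defined there, so I substitute it, multiply by $\pi_{\V}(\vxi)$, and eliminate this factor by \eqref{master_db_n}. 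In the first sum, $\tilde{\Phi}^+_\xi(\vxi)\pi_{\V}(\vxi)=\tilde{\Phi}^-_\xi(\vxi+\vec{\xi}h)\pi_{\V}(\vxi+\vec{\xi}h)$ converts $\pi_{\V}(\vxi)\tilde{\Phi}^+_\xi(\vxi)\,p_{\V}(\vxi+\vec{\xi}h)/\pi_{\V}(\vxi+\vec{\xi}h)$ into $\tilde{\Phi}^-_\xi(\vxi+\vec{\xi}h)p_{\V}(\vxi+\vec{\xi}h)$, while trivially $\pi_{\V}(\vxi)\tilde{\Phi}^+_\xi(\vxi)\,p_{\V}(\vxi)/\pi_{\V}(\vxi)=\tilde{\Phi}^+_\xi(\vxi)p_{\V}(\vxi)$; this reproduces the second line of \eqref{rp_eq_nn}. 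In the second sum, applying \eqref{master_db_n} shifted by $-\vec{\xi}h$, namely $\tilde{\Phi}^-_\xi(\vxi)\pi_{\V}(\vxi)=\tilde{\Phi}^+_\xi(\vxi-\vec{\xi}h)\pi_{\V}(\vxi-\vec{\xi}h)$ (legitimate because the summation constraint $\vxi-\vec{\xi}h\geq0$ forces both points into $\omp$), turns the bracketed terms into $\tilde{\Phi}^+_\xi(\vxi-\vec{\xi}h)p_{\V}(\vxi-\vec{\xi}h)-\tilde{\Phi}^-_\xi(\vxi)p_{\V}(\vxi)$, i.e. the first line of \eqref{rp_eq_nn}. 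Adding the two sums gives $\pi_{\V}(\vxi)Q_{\V}(p_{\V}/\pi_{\V})(\vxi)=Q^*_{\V}p_{\V}(\vxi)$, as claimed.

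I expect the only real care to be boundary bookkeeping: one must check that the summation constraints $\vxi\pm\vec{\xi}h\geq0$ appearing on the $Q_{\V}$ side coincide exactly with those in \eqref{rp_eq_nn}, and that \eqref{master_db_n} is invoked only on pairs $\vxi,\vxi\pm\vec{\xi}h$ both lying in $\omp$ — which is precisely what those constraints guarantee, so no spurious boundary terms are produced and $\pi_{\V}$ is used only where it is defined and positive. That is the main (and mild) obstacle; the remainder is a one-line substitution.
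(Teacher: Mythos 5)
Your proof is correct and is essentially the paper's own argument: the paper divides $Q^*_{\V}p_{\V}$ by $\pi_{\V}$ and uses the grouped reversibility condition \eqref{master_db_n} (and its shift to base point $\vxi-\vec{\xi}h$) to arrive at $Q_{\V}\bbs{p_{\V}/\pi_{\V}}$, while you run the identical substitution in the reverse direction, with the same grouped-flux form and the same constraint bookkeeping ensuring \eqref{master_db_n} is only invoked on pairs in $\omp$. No gap.
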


\begin{proof}
Using \eqref{master_db_n},    the right-hand side of CME \eqref{rp_eq} reads that for $\vxi\in\omp$, 
\begin{equation} 
\begin{aligned}
\frac{1}{\pi_{\V}}Q^*_{\V} p_{\V} = &\frac{1}{h}\sum_{\vec{\xi}, \vxi- \vec{\xi} h\geq 0}   \tilde{\Phi}^+_\xi(\vxi-\vec{\xi} h )     \frac{p_{\V}(\vxi- \vec{\xi} h,t)}{\pi_{\V}(\vxi)} -      \tilde{\Phi}^-_\xi(\vxi)     \frac{p_{\V}(\vxi,t)}{\pi_{\V}(\vxi)} \\
 &  +  \frac1h\sum_{\vec{\xi}, \vxi+h\vec{\xi}  \geq 0}   \tilde{\Phi}^-_\xi(\vxi+ \vec{\xi} h )     \frac{p_{\V}(\vxi+ \vec{\xi} h,t)}{\pi_{\V}(\vxi)} -  \tilde{\Phi}_\xi^+(\vxi) \frac{p_{\V}(\vxi,t)}{\pi_{\V}(\vxi)}\\
 =& \frac{1}{h}\sum_{\vec{\xi}, \vxi- \vec{\xi} h\geq 0}   \tilde{\Phi}^-_\xi(\vxi)    \bbs{ \frac{p_{\V}(\vxi- \vec{\xi} h,t)}{\pi_{\V}(\vxi- \vec{\xi} h)} -        \frac{p_{\V}(\vxi,t)}{\pi_{\V}(\vxi)}} \\
 &  +  \frac1h\sum_{\vec{\xi}, \vxi+h\vec{\xi}  \geq 0}   \tilde{\Phi}^+_\xi(\vxi)\bbs{     \frac{p_{\V}(\vxi+ \vec{\xi} h,t)}{\pi_{\V}(\vxi+ \vec{\xi} h)} -    \frac{p_{\V}(\vxi,t)}{\pi_{\V}(\vxi)}} = Q_{\V} \bbs{\frac{p_{\V}}{\pi_{\V}}}.
 \end{aligned}
\end{equation}
\end{proof}
\begin{defn}
Given any $0<\ell<\8$, if there exists a $R_\ell$ and $h_0$ such that, 
\begin{equation}
\sup_{t\in[0,T]}\sum_{|\vxi|\geq R_{\ell}, \vxi\in\omp} p_{\V}(\vxi,t) \leq e^{-\frac{\ell}{h}}, \quad \forall h\leq h_0,
\end{equation}
then we say the sequence of process $(\cv(t))$ is exponentially tight for each  $t\in[0,T].$ 
\end{defn}

Here we only consider $\vxi\in\omp$ because from \eqref{rp_eq}, it is natural to take $p_{\V}(\vxi, 0) = 0$ for $\vxi\in\omn$, which remains to be $0$ all the time for $\vxi\in\omn.$
Notice that
\begin{equation}
\sup_{t\in[0,T]} \bP\bbs{|\cv(t)|\geq R_{\ell}} \leq \bP\bbs{\sup_{t\in[0,T]}|\cv(t)|\geq R_{\ell}}.
\end{equation}
Hence the exponential tightness of $(\cv(t))$ for each $t\in[0,T]$ defined above is weaker then the usual definition of the exponential compact containment
condition for $\cv$, i.e.,
for any $\ell<\8$,  there exist  $R_\ell$ and $h_0$ such that, 
\begin{equation}
\bP(\sup_{t\in[0,T]}|\cv(t)|\geq R_{\ell}) \leq e^{-\frac{\ell}{h}}, \quad \forall h\leq h_0;
\end{equation}
 c.f. \cite[Theorem 4.4]{feng2006large}.

Then using Theorem \ref{thm:backwardEC}, we obtain  the existence and comparison principle for  $\uh = h\log w_{\V}$ and thus the solution $w_{\V}$ to backward equation \eqref{backward}. We remark that  one can also directly prove the existence and comparison principle for \eqref{backward} using linear semigroup theory developed by \textsc{Lumer–Phillips} in 1961. Therefore, under the detailed balance assumption of the invariant measure $\pi_{\V}$,  we have the following corollary.
\begin{cor}\label{cor:tight}
Assume there exists a positive reversible invariant measure $\pi_{\V}>0$ in $\omp$ to CME \eqref{rp_eq} such that \eqref{master_db_n} holds. We have
\begin{enumerate}[(i)]
\item there exists a unique global solution $p_{\V}(\vxi,t)=\pi_{\V}(\vxi)\uh(\vxi,t)$ in $\omp$, where $\uh$ is the solution to \eqref{upwind} with initial data $\frac{p_{{\V}}^0}{\pi_{\V}}$ in $\omp$;
\item if the initial density satisfies $c_1 \pi_{\V} \leq p^0_{\V} \leq c_2 \pi_{\V}$ in $\omp$, then
\begin{equation}
c_1 \pi_{\V} \leq p_{\V} \leq c_2 \pi_{\V} \quad \text{ in } \omp;
\end{equation}
\item if $\pi_{\V}$ is exponentially tight, then $p_{\V}$ is also exponentially tight for any $t$.
\end{enumerate}
\end{cor}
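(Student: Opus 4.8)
The plan is to transfer everything from the backward equation to the CME via the substitution $p_{\V}(\vxi,t)=\pi_{\V}(\vxi)\uh(\vxi,t)$. For part (i), first observe that $\pi_{\V}>0$ on $\omp$ lets us write $\uh:=p_{\V}/\pi_{\V}$ (extended by $0$ on $\omn$), and by the preceding Lemma, $p_{\V}$ solves CME \eqref{rp_eq} on $\omp$ if and only if $\uh$ solves $\pt_t\uh=Q_{\V}\uh$ there, i.e.\ the linear backward equation \eqref{backward} with initial data $\uh(\vxi,0)=p^0_{\V}(\vxi)/\pi_{\V}(\vxi)$. I would like to invoke Theorem \ref{thm:backwardEC}, but that theorem is stated for the \emph{nonlinear} semigroup on $\ell^\8_{0*}(\Omega_{\V})$ after the $h\log$ change of variables; so I would either (a) observe that the linear backward equation $\pt_t w_{\V}=Q_{\V} w_{\V}$ inherits the same monotonicity and $\ell^\8$-nonexpansiveness (the argument of Lemma \ref{lem_nonexp_d} and Lemma \ref{lem:perron} applies verbatim to the linear resolvent $w_{\V}-\Delta t\,Q_{\V}w_{\V}=f_{\V}$, which is even easier since $Q_{\V}$ is linear and the barrier construction of Proposition \ref{prop:cp} is not needed for boundedness — constants are subsolutions/supersolutions), giving $-Q_{\V}$ maximal accretive on $\ell^\8(\Omega_{\V})$ and hence a unique global contraction semigroup solution by Crandall--Liggett; or (b) as the excerpt itself remarks, use the classical Lumer--Phillips linear semigroup theory directly. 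Either route yields a unique global $w_{\V}(\cdot,t)\in C([0,\8);\ell^\8)$, hence a unique $p_{\V}=\pi_{\V}w_{\V}$.

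For part (ii), this is just the monotonicity/contraction of the linear semigroup: if $c_1\pi_{\V}\le p^0_{\V}\le c_2\pi_{\V}$ on $\omp$, then $c_1\le \uh(\vxi,0)\le c_2$ on $\omp$ (and $\uh$ is irrelevant on $\omn$ since the dynamics there is frozen at $0$), so by the contraction estimate \eqref{contraction} applied to the linear semigroup — equivalently, the maximum-principle computation \eqref{mp1} at a maximum/minimum point — we get $c_1\le\uh(\vxi,t)\le c_2$ for all $t$, and multiplying back by $\pi_{\V}$ gives $c_1\pi_{\V}\le p_{\V}\le c_2\pi_{\V}$ on $\omp$. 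One should note the constants $c_1=\inf_{\omp}p^0_{\V}/\pi_{\V}$ and $c_2=\sup_{\omp}p^0_{\V}/\pi_{\V}$ are finite whenever $p^0_{\V}$ is, e.g., a bona fide probability density comparable to $\pi_{\V}$; this is the standing assumption.

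For part (iii), suppose $\pi_{\V}$ is exponentially tight: for each $\ell$ there are $R_\ell,h_0$ with $\sum_{|\vxi|\ge R_\ell,\,\vxi\in\omp}\pi_{\V}(\vxi)\le e^{-\ell/h}$ for $h\le h_0$. By part (ii), $p_{\V}(\vxi,t)\le c_2\,\pi_{\V}(\vxi)$ uniformly in $t$, so $\sum_{|\vxi|\ge R_\ell,\,\vxi\in\omp}p_{\V}(\vxi,t)\le c_2\,e^{-\ell/h}$; absorbing the constant $c_2$ by enlarging $\ell$ slightly (replace $\ell$ by $\ell+h_0\log c_2$, or just note $c_2 e^{-\ell/h}\le e^{-(\ell-h\log c_2)/h}$ and shrink $h_0$) gives exactly the exponential tightness of $(\cv(t))$ for every $t\in[0,T]$. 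The main obstacle is essentially bookkeeping rather than depth: one must check that the nonlinear-semigroup machinery of Section \ref{sec:upwind}--\ref{sec:ext_dd} truly specializes to the linear backward equation (the barrier estimates of Proposition \ref{prop:cp} were designed to tame the exponential nonlinearity and polynomial $\tilde\Phi^\pm_j$ growth — in the linear case boundedness is immediate from constants being sub/supersolutions, so the only real input is monotonicity plus $\ell^\8$-contraction), and that the initial data $p^0_{\V}/\pi_{\V}$ genuinely lies in the relevant space so that Crandall--Liggett (or Lumer--Phillips) applies; also one must be mindful throughout that all statements are confined to $\omp$ since the $\omn$ components stay frozen at their initial value $0$.
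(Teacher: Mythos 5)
Your proposal is correct, and its skeleton is the same as the paper's: the reversibility lemma preceding the corollary turns the CME into the backward equation for $p_{\V}/\pi_{\V}$ on $\omp$ (with $\omn$ frozen), wellposedness of that backward equation gives (i), the $\ell^\8$ comparison/contraction gives (ii), and (iii) follows by summing $p_{\V}\leq c_2\pi_{\V}$ over $|\vxi|\geq R_\ell$ and absorbing $c_2$ into $\ell$. The one place you deviate is the wellposedness input: the paper's primary route applies Theorem \ref{thm:backwardEC} to $\uh=h\log w_{\V}$ and exponentiates, whereas you build the linear theory for $\pt_t w_{\V}=Q_{\V}w_{\V}$ directly (monotone resolvent, Perron with constants as barriers, Crandall--Liggett, or simply Lumer--Phillips) --- which is precisely the alternative the paper itself remarks on. Your route buys a little generality: going through $h\log w_{\V}$ implicitly needs $p^0_{\V}/\pi_{\V}$ bounded away from $0$ and lying in $\ell^\8_{0*}(\Omega_{\V})$ so that the logarithm is admissible initial data, while the linear argument only needs boundedness of the ratio; conversely, the paper's route reuses machinery already established and gets the comparison in (ii) for free from \eqref{contraction}. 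Your caveats are apt: as with the paper's version, Crandall--Liggett only yields the semigroup on the closure of the domain of $Q_{\V}$ (which contains $\ell^\8_{0*}(\Omega_{\V})$ but not all of $\ell^\8(\Omega_{\V})$, because of the polynomial growth of $\tilde{\Phi}^{\pm}_j$), so membership of $p^0_{\V}/\pi_{\V}$ in that space is an implicit hypothesis in both arguments.
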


Now we state a lemma which also ensures the exponential tightness as long as the initial density is compact support.
\begin{lem}\label{tight2}
Assume there exists (componently) positive $\vm$ satisfying \eqref{mb_j} and the 
  initial distribution $p^0_{\V}$ has a compact support. Then $\cv$ is   exponentially tight for any $t$.
\end{lem}
\begin{proof}
Under the assumption (ii), then \eqref{322mm} holds.
Using the mass balance vector \eqref{mb_j}, multiplying SDE \eqref{Csde} by $\vm$, we have
\begin{equation}
\cv(t) \cdot \vm = \cv(0) \cdot \vm.
\end{equation}
For  $\cv(0) \in \omp$, we know $\cv(t) \in \omp$ and obtain 
\begin{equation}
m_1 \|\cv(t)\| \leq \|m\|\|\cv(0)\|, \quad \forall t\geq 0
\end{equation}
and thus the associated distribution $\bP$ of $\cv$ has a compact support for any time $t$. 
We obtain the exponential tightness of $\cv(t)$ at each time $t$.
\end{proof}

\begin{rem}\label{rem38}
For chemical reaction detailed balance case \eqref{DB}, we know $\psi_{\V} = - h \log \pi_{\V} \to \KL(\vx||\peq)$. Since $\KL(\vx||\peq)$ is super linear, so we know for $|\vxi|\geq R\gg 1$, $\vxi\in\omp$
\begin{equation}
\psi_{\V} \geq \frac{1}{2} \KL(\vx||\peq) \geq c|\vx|.
\end{equation}
Thus
\begin{equation}
\sum_{|\vxi| \geq R} e^{-\frac{\psi_{\V}}{h}} \leq \int_{|\vx|\geq R} e^{-\frac{c|\vx|}{h}} \leq e^{-\frac{c R}{2h}}.
\end{equation}
For any $\ell<\8$, taking $R=\frac{2\ell}{c}$, we obtain $\pi_{\V}$ has exponential tightness and hence by Corollary \ref{cor:tight}, the chemical process satisfies the exponential tightness for any $t$.
\end{rem}

\section{Thermodynamic limit  of CME and backward equation in the HJE forms}\label{sec4}

The comparison principle and nonexpansive properties for the monotone schemes naturally bring up the concept of viscosity solutions to the corresponding continuous HJE as $h\to 0$; see Appendix \ref{app3} for the definition of viscosity solutions. Indeed, the monotone scheme approximation and the vanishing viscosity method are two ways to construct a viscosity solution to HJE proposed by Crandall and Lions in \cite{crandall1983viscosity, crandall1984two}.

In this section, we first study the existence of a upper semicontinuous (USC) viscosity solution to the stationary HJE in the Barron-Jensen sense; see Proposition \ref{prop:USC}. This relies on the assumption of the existence of a positive reversible invariant measure $\pi_{\V}$ for CME. We point out that our reversible assumption \eqref{master_db_n} is slightly more general than the commonly used chemical reaction detailed balance condition. Therefore, our invariant measure includes some non-equilibrium enzyme reactions, and the stationary HJE solution can be nonconvex, indicating a nonconvex energy landscape for non-equilibrium reactions \cite{GL22}.

Second, we focus on the viscosity solution to the dynamic HJE \eqref{HJE2psi}. Fixing $\Delta t$ in the backward Euler scheme \eqref{bEuler} and taking the limit $h\to 0$, following Barles and Perthame's procedure, the USC envelope of the discrete HJE solution gives a USC subsolution to HJE, which automatically inherits the 'no reaction' boundary condition from \eqref{bEuler}. The proof still relies on the monotonicity and nonexpansive property of the monotone schemes. Thanks to the conservation of mass, we use some mass functions $f(\vm \cdot \vx)$ to construct barriers to control the polynomial growth of coefficients in the Hamiltonian. Then, by the comparison principle, we obtain the viscosity solution to the continuous resolvent problem. Next, taking $\Delta t \to 0$, thanks to Crandall and Liggett's construction of a nonlinear contraction semigroup solution \cite{CrandallLiggett}, we finally obtain a unique viscosity solution to the HJE as a large size limit of the backward equation; see Theorem \ref{thm_vis}.

\subsection{Stationary HJE and convergence from the mesoscopic reversible invariant measure}\label{sec:sHJE}

The stationary solution to the HJE, $H(\nabla \psi^{ss}(\vx), \vx) = 0$, can be used to compute the energy landscape guiding the chemical reaction, to decompose the RRE to dissipative and conservative parts and to compute the associated energy barrier of transition paths; see  \cite{GL22}.    However, $\psi^{ss}$ is usually challenging to compute and is non-unique. Indeed, the structure of chemical reaction network can only be revealed through the global energy landscape obtained from the large deviation rate function for the invariant measure. In this section, we assume the existence of an invariant measure for the mesoscopic CME \eqref{rp_eq} satisfying the reversible condition \eqref{master_db_n} and then use it to obtain convergence to the stationary solution of the HJE.

Under the assumption of the existence of a positive reversible invariant measure, we now prove that this invariant measure converges to a Barron-Jensen's  USC viscosity solution.

As in \eqref{1.3}, we change the variable in the CME to $\psihs(\vxi)$ such that
$$\pi_{\V}(\vxi) = e^{-\psihs(\vxi)/h}.$$
Then the reversible condition for CME \eqref{master_db_n} becomes, for $\vxi\in\omp$ and $\vxi+ \vec{\xi}_{j}h\in\omp$,
\begin{equation}\label{db_psi}
\tilde{\Phi}^-_{\xi}(\vxi+ \vec{\xi}_{j}h)   e^{\frac{\psihs(\vxi) - \psihs(\vxi+\vec{\nu}_j h)}{h}} = \tilde{\Phi}^+_{\xi}(\vxi), \quad \forall \vec{\xi}.
\end{equation}
The solvability follows from the assumption that $\pi$ exists and satisfies the reversible condition for CME \eqref{master_db_n}.

Let $\ph(\vxi), \, \vxi\in \omp$, be the solution to \eqref{db_psi}. Take some continuous function $\psi(\vx)$ in the domain $\bR^N\backslash \bR^N_+$. Define the extension of $\ph$ in $\omn$ as 
\begin{equation}\label{exC}
\ph(\vxi)=\psi(\vxi), \quad \vxi\in\omn.
\end{equation}
Denote its upper semicontinuous (USC) envelope as
\begin{equation}\label{USC}
\bar{\psi}(\vx):=\limsup_{h\to 0^+, \vxi \to \vx}  \ph(\vxi).
\end{equation}
Similarly, we use the lower semicontinuous (LSC) envelope to construct a supersolution
\begin{equation}\label{LSC}
\underline{\psi}(\vx):=\liminf_{h\to 0^+, \vxi \to \vx}  \ph(\vxi).
\end{equation}
The next proposition states that, after taking the large size limit $h\to 0$ from a reversible invariant measure, we can obtain a USC viscosity solution to the stationary HJE in the sense of \textsc{Barron-Jensen} \cite[Definition 3.1]{Barron_Jensen_1990}.
\begin{prop}\label{prop:USC}
Let $\ph(\vxi)$ be a solution to \eqref{db_psi} with a continuous extension \eqref{exC}. Then, as $h\to 0$, the upper semicontinuous envelope $\bar{\psi}(\vx)$ is a USC viscosity solution to the stationary HJE in the Barron-Jensen's sense. That is, for any smooth test function $\varphi$, if $\bar{\psi}-\varphi$ has a local maximum at $\vx_0$, then
\begin{equation}
H(\nabla \varphi(\vx_0), \vx_0)=0.
\end{equation}
\end{prop}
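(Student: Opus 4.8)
The statement asserts that the USC envelope $\bar\psi$ of the discrete steady profiles $\ph$ is a Barron--Jensen viscosity \emph{solution} (i.e. a subsolution in the usual sense, but with the stronger one-sided Barron--Jensen requirement at maxima) of $H(\nabla\varphi(\vx_0),\vx_0)=0$. The plan is to pass to the limit $h\to0$ in the discrete detailed-balance relation \eqref{db_psi}, rewritten in the exponential WKB form, using the standard doubling/test-function machinery of Barles--Perthame adapted to this setting. First I would note that \eqref{db_psi} can be rearranged into
\begin{equation}\label{eq:plan-group}
\sum_{\vec\xi}\tilde\Phi^+_\xi(\vxi)\Big(e^{\frac{\ph(\vxi+\vec\xi h)-\ph(\vxi)}{h}}-1\Big)+\sum_{\vec\xi}\tilde\Phi^-_\xi(\vxi)\Big(e^{\frac{\ph(\vxi-\vec\xi h)-\ph(\vxi)}{h}}-1\Big)=0
\end{equation}
for every $\vxi\in\omp$ with all neighbors in $\omp$ — that is, $\ph$ solves the discrete stationary monotone scheme $H_\V(\vxi,\ph(\vxi),\ph)=0$ exactly (not just $\le$ or $\ge$), which is the key structural fact: each summand $\tilde\Phi^-_\xi(\vxi)(e^{(\ph(\vxi)-\ph(\vxi+\vec\xi h))/h}-1)$ equals $-\tilde\Phi^+_\xi(\vxi)(\cdots)$ by the reversibility identity, so pairing forward and backward jumps gives \eqref{eq:plan-group} identically. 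Since $\ph$ is uniformly bounded (by the barrier estimates / boundedness of the continuous extension on compacts — this needs the extension $\psi$ to be locally bounded, which I would assume), $\bar\psi$ is a well-defined bounded USC function.

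The core is the Barron--Jensen subsolution test. Let $\varphi$ be smooth and suppose $\bar\psi-\varphi$ has a strict local maximum at $\vx_0$ (strictness arranged by subtracting $|\vx-\vx_0|^4$). By a standard argument there exist $h_k\to0$ and grid points $\vxi^k\to\vx_0$ with $\ph[h_k]-\varphi$ attaining a local maximum over $\omp$ at $\vxi^k$ and $\ph[h_k](\vxi^k)\to\bar\psi(\vx_0)$. At such a maximum, $\ph(\vxi^k\pm\vec\xi h_k)-\ph(\vxi^k)\le \varphi(\vxi^k\pm\vec\xi h_k)-\varphi(\vxi^k)=\pm\vec\xi\cdot\nabla\varphi(\vxi^k)h_k+O(h_k^2)$, so each exponent is bounded above by $\pm\vec\xi\cdot\nabla\varphi(\vxi^k)+O(h_k)$. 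Because $x\mapsto e^x-1$ is increasing, substituting these upper bounds into \eqref{eq:plan-group} — and using $\tilde\Phi^\pm_\xi(\vxi^k)\to\Phi^\pm_\xi(\vx_0)$ by the assumed local Lipschitz continuity of the zero-extended $\Phi^\pm_j$ together with the convergence $\tilde\Phi_j^\pm(\vxi)\to\Phi_j^\pm(\vx)$ noted after \eqref{exLMA} — yields in the limit
\begin{equation}\label{eq:plan-sub}
\sum_{\vec\xi}\Phi^+_\xi(\vx_0)\big(e^{\vec\xi\cdot\nabla\varphi(\vx_0)}-1\big)+\sum_{\vec\xi}\Phi^-_\xi(\vx_0)\big(e^{-\vec\xi\cdot\nabla\varphi(\vx_0)}-1\big)\le 0,
\end{equation}
i.e. $H(\nabla\varphi(\vx_0),\vx_0)\le0$, the subsolution inequality. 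The point of using the \emph{maximum} (not minimum) is precisely that the monotone-scheme structure lets one pass inequalities the right way; this is the Barron--Jensen half.

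For the reverse inequality $H(\nabla\varphi(\vx_0),\vx_0)\ge0$, which upgrades "subsolution" to "solution" in the Barron--Jensen sense, I would exploit that \eqref{eq:plan-group} is an \emph{equality} with a definite sign structure. At the max point $\vxi^k$ one also has, for at least the direction(s) realizing the comparison, a matching lower control: since $\bar\psi(\vx_0)=\lim\ph(\vxi^k)$ and $\bar\psi\le\varphi+\varphi(\vx_0)-\bar\psi(\vx_0)$ near $\vx_0$, the oscillations $\ph(\vxi^k\pm\vec\xi h_k)-\ph(\vxi^k)$ are also bounded \emph{below} by $\pm\vec\xi\cdot\nabla\varphi(\vx_0)h_k+o(h_k)$ along a suitable subsequence (using that $\bar\psi$ is the $\limsup$, so the max is nearly attained in every direction); feeding these lower bounds into the equality \eqref{eq:plan-group} gives $H(\nabla\varphi(\vx_0),\vx_0)\ge0$. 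Alternatively, and more robustly, one can invoke that $\ph$ is the WKB transform of the invariant measure and compare against the known stationary solutions $u_m,u_M$ from Proposition \ref{prop:cp} to trap $H$ from both sides. The \textbf{main obstacle} I anticipate is exactly this second (supersolution-type) inequality: controlling the discrete gradients from below at a maximum of $\bar\psi-\varphi$ is delicate because $\bar\psi$ being USC gives good upper control but only $\limsup$-type lower control, so one must carefully choose the subsequence $\vxi^k$ and the perturbation to ensure the oscillation in the relevant jump directions is genuinely captured — this is where the flux-grouping \eqref{group_flux} and the exactness of \eqref{eq:plan-group} (as opposed to a one-sided scheme) are essential, and where a naive Barles--Perthame argument, which only yields one-sided statements, is not enough.
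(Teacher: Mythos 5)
There is a genuine gap, and it sits exactly where you flagged it. Your strategy works with the \emph{aggregated} stationary scheme (the sum over all $\vec\xi$), but at a maximum point of $\ph-\varphi$ you only control the increments $\ph(\vxi^k\pm\vec\xi h_k)-\ph(\vxi^k)$ from \emph{above}; from a summed identity this can only ever yield a one-sided inequality. (In fact the direction you state is reversed: replacing each exponent by its upper bound \emph{increases} every term, so from the sum being $0$ you deduce $H(\nabla\varphi(\vx_0),\vx_0)\ge 0$, not $\le 0$.) The second inequality cannot be recovered the way you propose: being a $\limsup$/USC envelope gives no lower bound on the discrete increments at a maximum, and comparison with the barriers $u_m,u_M$ of Proposition \ref{prop:cp} controls the values of the solution, not the Hamiltonian. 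The idea you are missing — and the one the paper uses — is to never sum over $\vec\xi$ at all: the reversibility relation \eqref{db_psi} is an \emph{equality between two positive quantities for each individual $\vec\xi$}, and it can be written in both orientations (jumps by $+\vec\xi h$ and by $-\vec\xi h$). Testing each orientation with $\varphi$ at the discrete maximum gives the two opposite inequalities \eqref{l1} and \eqref{l2}, whose $h\to0$ limits combine to the channelwise balance $\Phi^-_\xi(\vx_0)\,e^{-\vec\xi\cdot\nabla\varphi(\vx_0)}=\Phi^+_\xi(\vx_0)$ for every $\vec\xi$; then $H(\nabla\varphi(\vx_0),\vx_0)=0$ follows immediately because each paired term in \eqref{H} cancels. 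So the equality in the Barron--Jensen condition comes from the pairwise detailed-balance structure, not from any lower control of the discrete gradient.

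A secondary inaccuracy: your claimed exact identity (that $\ph$ solves the backward-form scheme $H_{\V}(\ph)=0$ exactly) is not correct as stated. Under \eqref{master_db_n} what holds exactly is stationarity of the \emph{forward} scheme \eqref{upwind00}, where each pair cancels with the rates evaluated at the shifted points $\vxi\pm\vec\xi h$; with the rates evaluated at $\vxi$ and the backward exponents, the sum is only $O(h)$ (already in a one-dimensional birth--death example with rates $\tilde\Phi^+\equiv 1$, $\tilde\Phi^-(x)=x$ the left-hand side equals $h$). An $O(h)$ error would not by itself ruin a limiting argument, but it removes the "exactness" your second half leans on — and in any case exactness of the aggregated sum is no substitute for the per-$\vec\xi$ identities that actually deliver the equality.
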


\begin{proof}
Step 1. For any smooth test function $\varphi$, let $x^*$ be a strict local maximal of $\bar{\psi}-\varphi$. Denote $c_0:=\max_j |\vec{\nu}_j|$. Then for some $r,$ there exists $c>0$ such that
\begin{equation}
\bar{\psi}(x_0) - \varphi(x_0) \geq \bar{\psi}(x) - \varphi(x) + c|x-x_0|^2, \quad x\in B(x_0, r).
\end{equation}
Then taking $c$ large enough, as proved by \textsc{Barles, Souganidis} in \cite[Theorem 2.1]{barles1991convergence}, there exists a sequence $\{\ph\}$ with $ x_{*}^{h}$ being the maximum point of $\ph -\varphi$ in $B(x_*^h, c_0 h)$ and satisfies
\begin{equation}
 x_{*}^{h} \to x_0, \quad \ph( x_{*}^{h}) \to \bar{\psi}(x_0).
\end{equation} 

Step 2. Since $\ph$ is the discrete solution to \eqref{db_psi}, 
\begin{equation}\label{tm_db_1}
\tilde{\Phi}^-_{\xi}(\vxi+ \vec{\xi}_{j}h)   e^{\frac{\ph(\vxi) - \ph(\vxi+\vec{\xi}_j h)}{h}} = \tilde{\Phi}^+_{\xi}(\vxi), \quad \forall \vec{\xi}.
\end{equation} 
Since for $x\in B(x_*^h, c_0 h)$, we have
\begin{equation}\label{tm_test1}
\ph(x_{*}^{h}) - \varphi(x_{*}^{h}) \geq \ph(x) - \varphi(x), 
\end{equation}
thus $\ph(x_{*}^{h}) -  \ph(x) \geq \varphi(x_{*}^{h}) - \varphi(x).$
 Thus replacing $\ph$ by $\varphi$ in \eqref{tm_db_1} yields
\begin{equation}\label{l1}
\tilde{\Phi}^-_{\xi}(x_*^h+ \vec{\xi}_{j}h)   e^{\frac{\varphi(x_*^h) - \varphi(x_*^h+\vec{\xi}_j h)}{h}} \leq  \tilde{\Phi}^+_{\xi}(x_*^h), \quad \forall \vec{\xi}.
\end{equation}

On the other hand, $\ph$ also satisfies
\begin{equation}\label{tm_db_2}
\tilde{\Phi}^+_{\xi}(\vxi- \vec{\xi}_{j}h)   e^{\frac{\ph(\vxi) - \ph(\vxi-\vec{\xi}_j h)}{h}} = \tilde{\Phi}^-_{\xi}(\vxi), \quad \forall \vec{\xi}
\end{equation} 
due to \eqref{db_psi}.
Hence same as \eqref{tm_test1}, using  $\ph(x_{*}^{h}) -  \ph(x) \geq \varphi(x_{*}^{h}) - \varphi(x)$, replacing $\ph$ by $\varphi$ in \eqref{tm_db_2} yields 
\begin{equation}\label{l2}
\tilde{\Phi}^+_{\xi}(x_*^h- \vec{\xi}_{j}h)   e^{\frac{\varphi(x_*^h) - \varphi(x_*^h-\vec{\xi}_j h)}{h}} \leq  \tilde{\Phi}^-_{\xi}(x_*^h), \quad \forall \vec{\xi}.
\end{equation}
Then talking limit $h\to 0$ in \eqref{l1} and \eqref{l2} implies
\begin{equation}
\tilde{\Phi}^-_{\xi}(x_0)   e^{-\nabla\varphi(x_0)} =  \tilde{\Phi}^+_{\xi}(x_0).
\end{equation}
\end{proof}

\begin{rem}
This notion of USC viscosity solution was first proposed by \textsc{Barron and Jensen} \cite{Barron_Jensen_1990}. They proved the uniqueness of the solution in this new notion for evolutionary problems. However, the uniqueness of the solution in this new notion for the stationary problem was left as an open problem.
On the other hand, if this USC viscosity solution can be proved to be continuous, then by \cite[Theorem 2.3]{Barron_Jensen_1990}, this USC viscosity solution is indeed the viscosity solution in the sense of \textsc{Crandall, Lions} viscosity solution \cite{crandall1983viscosity}.
\end{rem}

\begin{rem}
We point out that the assumption of the reversibility of $\pi_{\V}$ includes both the chemical reaction detailed balance case 
and some non-equilibrium enzyme reactions \cite{GL22}, where the energy landscape is proved to be the USC viscosity solution to the stationary HJE in the Barron-Jensen's sense (see Proposition \ref{prop:USC}). The resulting energy landscape is nonconvex with multiple steady states, indicating three key features of non-equilibrium reactions: multiple steady states, non-zero flux, and a positive entropy production rate at the non-equilibrium steady states (NESS), as advocated by \textsc{Prigogine} \cite{prigogine1967introduction}.
\end{rem}

\begin{rem}
In the chemical reaction detailed balance case \eqref{DB}, one has a closed formula for the unique invariant measure $\pi_{\V}$ given by the product Poisson distribution \eqref{pos}. However, for the corresponding macroscopic stationary HJE $H(\nabla \psi^{ss}(\vx),\vx)=0$, we still do not have the uniqueness of the viscosity solution since a constant is always a solution. This means a selection principle such as the weak KAM solution needs to be used to identify a unique solution; see \cite{GL23} for the drift-diffusion case. We believe this selection principle shall be consistent with the physically meaningful stationary solution, which is constructed by the limit of the invariant measure $\pi_{\V}$ for the CME as shown in Proposition \ref{prop:USC}. We leave this deep question for future study.
\end{rem}

\subsection{Convergence of backward equation to viscosity solution of HJE}\label{sec:vis}

Define the Banach space
\begin{equation}\label{space}
\buc:= \{u\in C(\mathbb{R}^N);\,\,  u(\vx)  \to \text{const} \text{ as } |\vx|\to +\infty\},
\end{equation}
where $\mathbb{R}^{N*}=\mathbb{R}^N \cup \{\infty\}$ is the one-point Alexandroff compactification of $\mathbb{R}^N$ \cite{Kelley}.
In this section, we will follow the framework of \textsc{Crandall, Lions} \cite{crandall1983viscosity} and \textsc{Feng, Kurtz} \cite{feng2006large} to first prove that the Hamiltonian, regarded as an operator on the Banach space $\buc$, is a $m$-accretive operator and then obtain the dynamic solution of HJE as a nonlinear semigroup contraction generated by $H$.

To overcome the polynomial growth of the coefficients in the Hamiltonian $H$, we further define a subspace of $\buc$ as
\begin{equation}
\ccc:= \{u\in \buc; \,\, u\equiv\text{const} \text{ for } |\vx|\geq R \text{ for some } R\}.
\end{equation}
Recalling the Hamiltonian $H(\nabla u(\vx), \vx)$ in \eqref{H}, we define the operator
\begin{equation}
H: D(H)\subset \buc \to \buc \quad \text{such that } u(\vx) \mapsto H(\nabla u(\vx), \vx).
\end{equation}
The maximality of $H$ such that $\text{ran}(I-\Delta t H)=\buc$ requires finding a classical solution to the HJE, which is generally difficult. This is because (i) the solution to the HJE does not have enough regularity; (ii) due to the polynomial growth of the coefficients in the Hamiltonian, we cannot find a solution in commonly used spaces, such as $C_b(\mathbb{R}^N)$.

Instead, we will use the notion of viscosity solution to extend the domain of $H$ so that the maximality of $H$ in the sense of viscosity solution is easy to obtain as the limit of the backward Euler approximation when $h\to 0$. We refer to Appendix \ref{app3} for the definition of viscosity solutions. Additionally, we need to construct a $\buc$ solution as a limit of a $\ccc$ viscosity solution obtained above.

Precisely, in the first step, following \cite{crandall1983viscosity} and \cite{feng2006large}, we define the viscosity extension of $H$ as
\begin{equation}\label{operatorH}
\hat{H}_1: D(\hat{H}_1)\subset \ccc \to \ccc
\end{equation}
satisfying
\begin{enumerate}[(i)]
\item $u\in D(\hat{H}_1)\subset \ccc$ if and only if there exists $f\in \ccc$ such that the resolvent problem
$$u(\vx)-\Delta t H(\nabla u(\vx), \vx) = f(\vx)$$
has a unique viscosity solution $u\in \ccc$.
\item For $u\in D(\hat{H}_1)$, $\hat{H}_1(u):= \frac{u-f}{\Delta t}$ for such an $f(\vx)$ in (i).
\end{enumerate}
In other words, the domain $D(\hat{H}_1)$ is defined as the range of the resolvent operator
\begin{equation}\label{con_J}
J_{\Delta t}:= (I-\Delta t \hat{H}_1)^{-1}.
\end{equation}
Since the resolvent problem holding in the classical sense implies that the resolvent problem has a viscosity solution, we can regard $\hat{H}_1$ as an extension of $H$. We will prove later in Proposition \ref{prop_h_limit} the existence and comparison principle for the resolvent problem in $\ccc$ above.
We remark that the domain $D(\hat{H}_1)$ characterized in (i) does not depend on the value of $\Delta t<\infty$. Indeed, if $u\in D(\hat{H}_1)$ such that $u_1-H(\nabla u_1(x), x)=f_1$ has a viscosity solution $u_1\in \ccc$ for some $f_1\in \ccc$, then $u_2-\Delta t H(\nabla u_2(x),x) = f_2$ also has a viscosity solution with $f_2= \Delta t f_1+ (1-\Delta t)u_1\in \ccc$.

Next, we extend $\ccc$ to the Banach space $\buc$. Notice that $\ccc$ is dense in $\buc$. For any $f\in \buc$, there exists a Cauchy sequence $f_k\in \ccc$ such that $\|f_k - f\|_{\infty} \to 0$. Then, from the comparison principle of the resolvent problem in $\ccc$ \cite[Page 293, Theorem 5.1.3]{bardi1997optimal}, we know that the corresponding viscosity solutions $u_k\in \ccc$ of the resolvent problem with $f_k$ form a Cauchy sequence in $\buc$. Let $u$ be the limit of $u_k$ in $\buc$. This limit is independent of the choice of the Cauchy sequence. Thus, we further define the viscosity extension of $H$ as
\begin{equation}\label{operatorH_n}
\hat{H}: D(\hat{H})\subset \buc \to \buc
\end{equation}
satisfying
\begin{enumerate}[(i)]
\item $u\in D(\hat{H})\subset \buc$ if and only if there exists $f\in \buc$ such that the resolvent problem
$$u(\vx)-\Delta t H(\nabla u(\vx), \vx) = f(\vx)$$
has a unique viscosity solution $u\in \buc$.
\item For $u\in D(\hat{H})$, $\hat{H}(u):= \frac{u-f}{\Delta t}$ for such an $f(\vx)$ in (i).
\end{enumerate}

Below, we show that the abstract domain $D(\hat{H})$ above indeed includes a large subspace.

\begin{lem}\label{lem_inc}
We have the following inclusions:
\begin{align}
C^1_{c}(\mathbb{R}^{N*}) \subset D(H) \subset D(\hat{H}_1) \subset D(\hat{H});\label{inc1}\\
\buc = \overline{D(H)}^{\|\cdot\|_{\infty}} = \overline{D(\hat{H}_1)}^{\|\cdot\|_{\infty}} = \overline{D(\hat{H})}^{\|\cdot\|_{\infty}}\label{inc2}.
\end{align}
\end{lem}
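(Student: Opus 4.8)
The plan is to prove the two displays \eqref{inc1} and \eqref{inc2} in the order shown. For \eqref{inc1} the only non-routine inclusion is $C^1_{c*}(\mathbb{R}^N)\subset D(H)$: given $u\in C^1_{c*}$, the composition $\vx\mapsto H(\nabla u(\vx),\vx)$ lies in $\buc$ because $\nabla u$ is continuous and compactly supported (so $\vec\nu_j\cdot\nabla u$ is bounded, the exponentials $e^{\pm\vec\nu_j\cdot\nabla u}$ are bounded, and $\Phi^\pm_j(\vx)$ — which is polynomial — is multiplied by $(e^{\pm\vec\nu_j\cdot\nabla u}-1)$, a factor that vanishes off the compact set where $\nabla u\neq 0$); moreover on $\bR^N_+$ with $|\vx|$ large, $\nabla u=0$ so $H(\nabla u(\vx),\vx)=0$, hence $H(\nabla u(\cdot),\cdot)\in\ccc\subset\buc$. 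This gives $u\in D(H)$. The inclusion $D(H)\subset D(\hat H_1)$ is immediate since a classical solution of the resolvent problem is a viscosity solution (and uniqueness in $\ccc$ comes from the comparison principle cited from \cite{bardi1997optimal}, to be established in Proposition \ref{prop_h_limit}); $D(\hat H_1)\subset D(\hat H)$ is immediate from the very definition of $\hat H$ as the $\buc$-closure extension of $\hat H_1$.

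For \eqref{inc2}, the chain of inclusions $C^1_{c*}\subset D(H)\subset D(\hat H_1)\subset D(\hat H)\subset \buc = C_{0*}(\mathbb{R}^N)$ already pins every closure between $\overline{C^1_{c*}}^{\|\cdot\|_\8}$ and $C_{0*}(\mathbb{R}^N)$, so it suffices to show $C^1_{c*}(\mathbb{R}^N)$ is dense in $C_{0*}(\mathbb{R}^N)$ in the sup norm. First I would reduce to $C_{c*}$: given $u\in C_{0*}$ with far-field constant $c$ on $\bR^N_+$, for each $\eps>0$ pick $R$ with $|u(\vx)-c|<\eps$ for $\vx\in\bR^N_+$, $|\vx|\ge R$, and replace $u$ by a function that equals $u$ on a large ball and is smoothly (or continuously) interpolated to the constant $c$ outside, giving a $C_{c*}$ function within $\eps$ of $u$. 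Then mollify: convolving a $C_{c*}$ function with a standard mollifier $\phi_\delta$ preserves (for $\delta$ small) the property of being constant on $\bR^N_+$ for $|\vx|>R'$ — here one uses that the relevant set is of the form $\{\vx\in\bR^N_+:|\vx|\ge R\}$ and the constant region is, up to the mollifier's support, stable — and converges uniformly to the $C_{c*}$ function. Composing the two approximations yields the density of $C^1_{c*}$ in $C_{0*}$.

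The main obstacle I anticipate is the mollification step: the region on which a $C_{c*}$ function is forced to be constant is $\bR^N_+\cap\{|\vx|\ge R\}$, which has a nonsmooth boundary (the coordinate hyperplanes), so one must be slightly careful that convolution with $\phi_\delta$ does not destroy the ``constant on $\bR^N_+$ for large $|\vx|$'' structure near $\partial\bR^N_+$. The clean fix is to first arrange, in the $C_{c*}$ reduction step, that the function is constant on the larger set $\{|\vx|\ge R\}$ (all of $\bR^N$, not just $\bR^N_+$) outside a ball — this is harmless because membership in $C_{0*}$ only constrains the far field inside $\bR^N_+$, and one is free to redefine the function outside $\bR^N_+$ to be the same constant — after which mollification by $\phi_\delta$ with $\delta<$ (distance to that ball) trivially preserves constancy for $|\vx|\ge R+\delta$ and the result is genuinely $C^1_{c*}$. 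With that arrangement the argument is routine, and I would not belabor the estimates.
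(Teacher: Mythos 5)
Your treatment of \eqref{inc1} is essentially the paper's: $C^1_{c*}(\mathbb{R}^N)\subset D(H)$ because $\nabla u$ vanishes on $\bR^N_+\cap\{|\vx|\ge R\}$ while $H(\vp,\vx)=0$ off $\bR^N_+$ (the zero extension of $\Phi^\pm_j$), and the remaining inclusions are read off the definitions of $\hat{H}_1$ and $\hat{H}$; the only cosmetic slip is the claim that $\nabla u$ is compactly supported, which the definition of $C_{c*}$ does not give (no constraint outside $\bR^N_+$), but this is harmless for exactly the reason you then invoke, namely $\Phi^\pm_j\equiv 0$ off $\bR^N_+$.

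The genuine gap is in your proof of the density $C_{0*}(\mathbb{R}^N)=\overline{C^1_{c*}(\mathbb{R}^N)}^{\|\cdot\|_\infty}$, which the paper asserts in one line and you try to prove. Your reduction step --- approximate $u$ by a function equal to $u$ on a large ball and equal to the far-field constant $c$ outside, justified by saying one may ``harmlessly redefine $u$ outside $\bR^N_+$ to be the same constant'' --- is not a sup-norm-small modification. Membership in $C_{0*}$ imposes no far-field behavior outside $\bR^N_+$, so $u$ may stay far from $c$, or oscillate, at infinity there. Concretely, $u(\vx)=\sin\bigl(\min(x_1,0)\bigr)$ lies in $C_{0*}(\mathbb{R}^N)$ (it is identically $0$ on $\bR^N_+$), yet every function that is constant on all of $\{|\vx|\ge R\}$ is at sup-distance at least $1$ from $u$; the uniform closure of your approximating class consists of functions having a limit at infinity in every direction, a proper subspace of $C_{0*}$. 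The same objection already applies to your first reduction (``interpolate to the constant $c$ outside a large ball''), before the ``clean fix''. Nor can the fix be repaired while keeping fixed-scale mollification: if you correctly leave the approximant equal to $u$ outside $\bR^N_+$ (which $\ccc$ permits), that function is in general not uniformly continuous, and convolution with a fixed $\phi_\delta$ need not converge uniformly on $\bR^N$. The corner of $\bR^N_+\cap\{|\vx|\ge R\}$, which you flagged as the main obstacle, is not the real issue. A correct route: take a Whitney-type smooth uniform approximation $w$ of $u$ (variable mollification scale via a partition of unity), observe that the open set $U_\eps=\{|u-c|<\eps\}$ contains the closed set $A=\bR^N_+\cap\{|\vx|\ge R\}$ for $R$ large, choose a smooth $\chi$ with $\chi=0$ on $A$ and $\chi=1$ on $\bR^N\setminus U_\eps$, and set $v=\chi w+(1-\chi)c$; then $v\in C^1_{c*}(\mathbb{R}^N)$ and $\|v-u\|_\infty\le\eps$, which yields \eqref{inc2}.
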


\begin{proof}
First, based on the definition of $D(\hat{H})$, it is obvious that $D(H)\subset D(\hat{H})$, because for $u$ such that $H(\nabla u(\vx),\vx)\in \buc$ holds in the classical sense, it implies that there exists $f\in \buc$ such that $(I-\Delta t H)u = f$ holds in the viscosity solution sense. Also, $D(\hat{H}_1)\subset D(\hat{H})$ is obvious because if there exists $f\in \ccc \subset \buc$, then we have a viscosity solution $u\in \ccc\subset \buc$.

Second, for $u\in C^1_{c}(\mathbb{R}^{N*})$, $\nabla u=0$ outside a ball $B_r$. Thus, $H(\nabla u(\vx), \vx)$ is bounded for $\vx\in B_r$, while $H(\nabla u(\vx),\vx)=0$ outside $B_r$. Hence, $H(\nabla u(\vx),\vx)\in \ccc$ and $u\in D(H)$. So we conclude \eqref{inc1}.

Third, since $\ccc\subset \buc$ and $\buc$ is a closed space, we have $\buc = \overline{C^1_{c}(\mathbb{R}^{N*}) }^{\|\cdot\|_{\infty}} \subset \overline{D(\hat{H})}^{\|\cdot\|_{\infty}} \subset \buc.$
\end{proof}

In the next subsection, we construct a viscosity solution to the resolvent problem
\begin{equation}\label{con_J}
u = J_{\Delta t} f \quad \text{with} \quad J_{\Delta t}= (I-\Delta t \hat{H})^{-1}.
\end{equation}
This solvability in the viscosity solution sense gives the maximality of $\hat{H}$.

Then the nonexpansive property of $J_{\Delta t}$ can be shown after taking limit from the nonexpansive property of the discrete resolvent $J_{\Delta t, h}$ proved in Lemma \ref{lem_nonexp_d}.
\subsubsection{Barles-Perthame's procedure of  convergence to viscosity solution as $h\to 0$}
In this section, we first fix $\Delta t$ and take $h\to 0$ to construct a viscosity solution to
the backward Euler problem
\begin{equation}\label{resolventP}
(I-\Delta t \hat{H}) u^n(\vx) = u^{n-1}(\vx), \quad \vx\in \bR^N.
\end{equation}
For easy presentation, this reduces to solving
 the following resolvent equation
\begin{equation}\label{resolventP_n}
(I-\Delta t \hat{H}) u(\vx) = f(\vx), \quad \vx\in \bR^N.
\end{equation}
The following proposition follows Barles-Perthame's procedure  \cite{Barles_Perthame_1987} to use the upper semicontinuous (USC) envelope of the  numerical approximation to construct a subsolution to \eqref{resolventP}. Let $u_{\V}(\vxi), \, \vxi\in \Omega_{\V}$ be the solution to \eqref{dis_exist}, and define 
\begin{equation}\label{USC}
\bar{u}(\vx):=\limsup_{h\to 0^+, \vxi \to \vx}  u_{\V}(\vxi).
\end{equation}
 Similarly, we use the lower semicontinuous (LSC) envelope to construct a supersolution
 \begin{equation}\label{LSC}
\underline{u}(\vx):=\liminf_{h\to 0^+, \vxi \to \vx}  u_{\V}(\vxi).
 \end{equation}
 We denote the set of upper semicontinuous functions on $\bR^N$ as $\usc.$

 Below, we impose the local Lipschitz continuity condition for ${\Phi}^\pm_j(\vx), \, \vx\in \mathbb{R}^N$ after zero extension. This condition is to guarantee the comparison principle for the viscosity sub/super solution constructed in the following proposition. We remark that this local Lipschitz continuity condition can be weakened, but the comparison principle requires more estimates; see \cite{Deng_Feng_Liu_2011, Kraaij_Mahe_2020}. Similar assumptions on the vanishing rate of fluxes near the boundary are studied for the sample path large deviation principle \cite{Agazzi_Andreis_Patterson_Renger_2021}.
 \begin{prop}\label{prop_h_limit}
Let $u_{\V}(\vxi)$ be a solution to \eqref{dis_exist} with $f\in \ell^{\8}_{c*}$. Assume ${\Phi}^\pm_j(\vx), \, \vx\in \bR^N$, after zero extension, is local Lipschitz continuous.  Assume there exists (componently) positive $\vm$ satisfying \eqref{mb_j}. Then as $h\to 0$, the upper semicontinuous envelope $\bar{u}(\vx)$ is a subsolution to \eqref{resolventP}. The lower semicontinuous envelope $\underline{u}(\vx)$ is a supersolution to \eqref{resolventP}. Furthermore, from the comparison principle, 
\begin{equation}\label{dis_h_limit}
u(\vx) = \bar{u} = \underline{u} = \lim_{h\to 0, \vxi \to \vx} u_{\V}(\vxi)\in \ccc.
\end{equation}
\end{prop}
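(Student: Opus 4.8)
The plan is to run the half-relaxed limit (Barles--Perthame \cite{Barles_Perthame_1987}) method, using the mass-function barriers of Proposition \ref{prop:cp} to absorb the polynomial growth of the intensities $\tilde{\Phi}^\pm_j$. First I would record that, since $f\in\ell^\8_{c*}$, Lemma \ref{lem:perron} gives the $h$-uniform bound $\|u_{\V}\|_\8\le\|f\|_\8$, and Proposition \ref{prop:cp}(iv) gives $u_{\V}\equiv c$ on $\omp\cap\{|\vxi|\ge\rho\}$, where $c$ is the far-field value of $f$ and $\rho:=\|\vm\|R/m_1$ (both independent of $h$), while $u_{\V}=f$ on $\omn$ because $H_{\V}\equiv0$ there by \eqref{exPHI}. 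Consequently the upper relaxed limit $\bar u=\limsup_{h\to0,\vxi\to\vx}u_{\V}(\vxi)$ and the lower relaxed limit $\underline u=\liminf_{h\to0,\vxi\to\vx}u_{\V}(\vxi)$ are finite, bounded, satisfy $\underline u\le\bar u$ pointwise, and coincide outside the fixed compact set $\overline{B_\rho\cap\bR^N_+}$ (both equal $c$ on $\bR^N_+$ and equal the continuous datum $f$ on $\bR^N\setminus\bR^N_+$).

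Next I would show $\bar u$ is a viscosity subsolution of \eqref{resolventP}. Fixing $\varphi\in C^1$ and a (w.l.o.g.\ strict) local maximum $\vx_0$ of $\bar u-\varphi$, the perturbed-test-function lemma \cite[Theorem 2.1]{barles1991convergence} produces $h\to0$ and grid points $\vx_h\to\vx_0$ with $u_{\V}(\vx_h)\to\bar u(\vx_0)$ such that, for $h$ small, $u_{\V}(\vx_h\pm\vec{\nu}_jh)-u_{\V}(\vx_h)\le\varphi(\vx_h\pm\vec{\nu}_jh)-\varphi(\vx_h)$ for every $j$. Evaluating \eqref{dis_exist} at $\vx_h$ and using the monotone-scheme property \eqref{mono} to replace, inside the exponentials, the increments of $u_{\V}$ by the larger increments of $\varphi$,
\begin{align*}
\frac{u_{\V}(\vx_h)-f(\vx_h)}{\Delta t}&=H_{\V}\big(\vx_h,u_{\V}(\vx_h),u_{\V}\big)\\
&\le\sum_{j:\,\vx_h+\vec{\nu}_jh\ge0}\tilde{\Phi}^+_j(\vx_h)\Big(e^{(\varphi(\vx_h+\vec{\nu}_jh)-\varphi(\vx_h))/h}-1\Big)+\sum_{j:\,\vx_h-\vec{\nu}_jh\ge0}\tilde{\Phi}^-_j(\vx_h)\Big(e^{(\varphi(\vx_h-\vec{\nu}_jh)-\varphi(\vx_h))/h}-1\Big).
\end{align*}
Sending $h\to0$: the exponents converge to $\pm\vec{\nu}_j\cdot\nabla\varphi(\vx_0)$ by Taylor expansion, $\tilde{\Phi}^\pm_j(\vx_h)\to\Phi^\pm_j(\vx_0)$ by the convergence of the mesoscopic LMA \eqref{newR} to the macroscopic one \eqref{lma} — here the assumed local Lipschitz continuity of the zero-extended $\Phi^\pm_j$ controls the terms that the constraints $\vx_h\pm\vec{\nu}_jh\ge0$ switch on or off near $\partial\bR^N_+$, their coefficients being then small — and the right-hand side converges to $H(\nabla\varphi(\vx_0),\vx_0)$ as in \eqref{H}. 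This yields $\bar u(\vx_0)-\Delta t\,H(\nabla\varphi(\vx_0),\vx_0)\le f(\vx_0)$ (trivial when $|\vx_0|\ge\rho$ or $\vx_0\notin\bR^N_+$, where $\bar u$ is locally constant or equals $f$ and $H$ vanishes). The supersolution property of $\underline u$ would follow from the mirror argument at a strict local minimum of $\underline u-\varphi$, with \eqref{mono} used in the opposite direction.

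Then I would invoke the comparison principle for the resolvent problem \cite[Theorem 5.1.3]{bardi1997optimal} — which applies here because $\bar u$ is a bounded USC subsolution, $\underline u$ a bounded LSC supersolution, and the two already agree near infinity so the polynomial growth of the coefficients is harmless (the comparison being effectively carried out on a bounded region) — to conclude $\bar u\le\underline u$. Together with $\underline u\le\bar u$ this forces $\bar u=\underline u=:u$; hence $u$ is continuous, bounded, and $\equiv c$ on $\bR^N_+\setminus B_\rho$, i.e.\ $u\in C_{c*}(\mathbb{R}^N)$, and the equality of the two half-relaxed limits upgrades the convergence to $u_{\V}(\vxi)\to u(\vx)$ as $h\to0$, $\vxi\to\vx$ (locally uniformly), which is exactly \eqref{dis_h_limit}.

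The main obstacle is the usual pair of points in convergence-of-monotone-schemes arguments: producing the near-extremal grid points $\vx_h$ and passing the smooth test function through the monotonicity \eqref{mono} (handled via \cite{barles1991convergence}), and the interaction at $\partial\bR^N_+$ between the reaction constraints $\vxi\pm\vec{\nu}_jh\ge0$ and the zero extension of $\Phi^\pm_j$ — which is precisely why local Lipschitz continuity of the extension is assumed, so that the coefficients of the switched terms vanish in the limit. The polynomial growth of $\tilde{\Phi}^\pm_j$, which is the reason the abstract theorems of Souganidis and Barles--Perthame do not apply directly, is defused by Proposition \ref{prop:cp}: it confines all $h$-dependence to the fixed ball $B_\rho$ and forces $u_{\V}$ to be constant beyond it, so there the exponential terms vanish identically and the comparison principle is only ever used on a bounded set.
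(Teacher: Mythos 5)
Your proposal is correct and follows essentially the same route as the paper: the half-relaxed limit (Barles--Perthame) procedure, with the barriers of Proposition \ref{prop:cp} forcing $u_{\V}$ to be constant outside a fixed ball, the Barles--Souganidis lemma producing the near-extremal grid points where monotonicity lets you substitute the test function, and the comparison principle of \cite[Theorem 5.1.3]{bardi1997optimal} to merge the sub- and supersolution envelopes into the $C_{c*}$ limit. Your additional remarks on the boundary switching terms and the role of the Lipschitz extension are consistent elaborations, not a different argument.
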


\begin{proof}
Step 1. From the barrier estimates in  Proposition \ref{prop:cp}, we know $\uh(\vxi)\equiv  c_0$ for $|\vxi|> R$ for some $R>0$ and for some constant $c_0$. Thus 
\begin{equation}
\bar{u}(\vx)=\underline{u}(\vx)=c_0 \,\, \text{ for } |\vxi|>R.
\end{equation}

Step 2. For any test function $\varphi$, let $x_0$ be a strict local maximal of $\bar{u}-\varphi$. Denote $c_0:=\max_j |\vec{\nu}_j|$. Then for some $r,$ there exists $c>0$ such that
\begin{equation}
\bar{u}(x_0) - \varphi(x_0) \geq \bar{u}(x) - \varphi(x) + c|x-x_0|^2, \quad x\in B(x_0, r).
\end{equation}
Then taking $c$ large enough, as proved by \textsc{Barles, Souganidis} in \cite[Theorem 2.1]{barles1991convergence}, there exists a sequence $\{u_{\V}\}$ with $ x_{*}^{h}$ being the maximum point of $u_{\V} -\varphi$ in $B(x_*^h, c_0 h)$ and satisfies
\begin{equation}
 x_{*}^{h} \to x_0, \quad u_{\V}( x_{*}^{h}) \to \bar{u}(x_0).
\end{equation} 

Step 3. Since $u_{\V}$ is the discrete solution to \eqref{dis_exist}, 
\begin{equation} \label{tm_p1}
 u_{\V}(x_{*}^{h}) - \lambda H_{\V}(x_{*}^{h}, u_{\V}(x_{*}^{h}), u_{\V}) = f(x_{*}^{h}).
\end{equation} 
Since for $x\in B(x_*^h, c_0 h)$, we have
\begin{equation}
u_{\V}(x_{*}^{h}) - \varphi(x_{*}^{h}) \geq u_{\V}(x) - \varphi(x), 
\end{equation}
so  $u_{\V}(x)- u_{\V}(x_{*}^{h})    \leq \varphi(x)- \varphi(x_{*}^{h}) .$
 Thus replacing $u_{\V}$ by $\varphi$ in \eqref{tm_p1},  we have
\begin{equation}
H_{\V}(x_{*}^{h}, u_{\V}(x_{*}^{h}), u_{\V}) \leq H_{\V}(x_{*}^{h}, \varphi(x_{*}^{h}), \varphi).
\end{equation}
Thus taking limit $h\to 0$,
\begin{equation} 
  \bar{u}(x_0) - \lambda H(x_0, \nabla\varphi(x_0)) \leq  f(x_0).
\end{equation} 

Step 4, similarly, we can prove the LSC envelope $\underline{u}$ is a supersolution. Then by the comparison principle of \eqref{resolventP} in a ball $B_R$ \cite[Page 293, Theorem 5.1.3]{bardi1997optimal}, we have $\bar{u}\leq \underline{u}$ and thus
\begin{equation}
u(\vx) = \lim_{h\to 0, \vxi \to \vx} u_{\V}(\vxi)
\end{equation}
is the unique viscosity solution to \eqref{resolventP}.
\end{proof}

\begin{cor}\label{cor:semi}
Assume ${\Phi}^\pm_j(\vx), \, \vx\in \bR^N$, after zero extension, is local Lipschitz continuous. Assume there exists (componentwisely) positive $\vm$ satisfying \eqref{mb_j}.
\begin{enumerate}[(i)]
\item Let $f\in \ccc$, then the limit solution $u(\vx)\in \ccc$ obtained in Proposition \ref{prop_h_limit} is the unique viscosity solution to \eqref{resolventP}.
\item Let $u_1$ and $u_2$ be viscosity solutions to \eqref{resolventP}  in (i) corresponding to $f_1$ and $f_2$.  Then $\|u_1- u_2\|_{\8}\leq \|f_1 - f_2\|_{ \8}$.
\item Let $f\in \buc$, then  there exists a unique viscosity solution to \eqref{resolventP}  $u\in \buc$ which satisfies the non-expansive property. In other words, $\ran(I+\Delta t \hat{H})=\buc.$
\end{enumerate}
\end{cor}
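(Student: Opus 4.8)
\emph{Part (i).} The plan is to read off the statement from Proposition~\ref{prop_h_limit} once a discretization of the continuous datum is supplied. Given $f\in\ccc$, let $R$ be a radius with $f\equiv c_f$ on $\{\vx\in\bR^N_+:|\vx|\geq R\}$, and put $f_{\V}:=f|_{\Omega_{\V}}$; these form an equibounded, equi-compactly-supported family in $\ell^\infty_{c*}(\Omega_{\V})$ with the \emph{same} constancy radius $R$ for every $h$. By Lemma~\ref{lem:perron} and the barrier estimate of Proposition~\ref{prop:cp}(iii)--(iv), each discrete resolvent problem \eqref{dis_exist} has a unique solution $u_{\V}$ which is identically $c_f$ outside the ball of radius $\|\vm\|R/m_1$ \emph{uniformly in $h$}; then Proposition~\ref{prop_h_limit} gives $u_{\V}(\vxi)\to u(\vx)$ as $h\to 0,\ \vxi\to\vx$, with $u\in\ccc$ a viscosity solution of \eqref{resolventP_n}. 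For uniqueness, observe that any viscosity solution $\tilde u\in\ccc$ must equal $c_f$ outside a ball: at a point where $\tilde u$ is locally constant only test gradients $\nabla\varphi=0$ occur, and since $H(0,\vx)\equiv 0$ the sub/supersolution inequalities force that constant to be $c_f$; on the remaining ball one invokes the comparison principle \cite[Thm.~5.1.3]{bardi1997optimal} (its structure hypotheses hold since, after zero extension, $\Phi^\pm_j$ are locally Lipschitz). Hence $u=J_{\Delta t}f$ is the unique viscosity solution.

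\emph{Part (ii).} This is a passage to the limit of the discrete nonexpansiveness. Discretizing $f_i$ to $f_{i,\V}:=f_i|_{\Omega_{\V}}$ and denoting by $u_{i,\V}$ the corresponding solutions of \eqref{dis_exist}, Lemma~\ref{lem_nonexp_d}(ii) yields $\|u_{1,\V}-u_{2,\V}\|_{\8}\leq \|f_{1,\V}-f_{2,\V}\|_{\8}\leq \|f_1-f_2\|_{\8}$. Letting $h\to 0$ and using the pointwise convergence $u_{i,\V}(\vxi)\to u_i(\vx)$ from Part~(i) gives $|u_1(\vx)-u_2(\vx)|\leq\|f_1-f_2\|_{\8}$ for every $\vx$, i.e.\ $\|u_1-u_2\|_{\8}\leq\|f_1-f_2\|_{\8}$.

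\emph{Part (iii).} Here the plan is a completion argument. By Lemma~\ref{lem_inc}, $\ccc$ is dense in $\buc$; given $f\in\buc$, choose $f_k\in\ccc$ with $\|f_k-f\|_{\8}\to 0$. By Part~(ii) the viscosity solutions $u_k\in\ccc$ of \eqref{resolventP_n} with data $f_k$ form a Cauchy sequence in the sup norm; since $\buc=C_{0*}(\bR^N)$ is closed in $C_b(\bR^N)$ (uniform limits of continuous functions are continuous, and the far-field constants $c_{f_k}$ are themselves Cauchy, so the common limit $c_f:=\lim_k c_{f_k}$ is the far-field value of the limit), we get $u:=\lim_k u_k\in\buc$. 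Stability of viscosity solutions under uniform convergence of the right-hand side (the Hamiltonian is fixed, only $f_k\to f$) shows $u$ solves \eqref{resolventP_n} with data $f$, and the nonexpansive bound passes to the limit, so $\|u_1-u_2\|_{\8}\leq\|f_1-f_2\|_{\8}$ for all data in $\buc$. Uniqueness follows from the comparison principle for \eqref{resolventP_n} in $\buc$, obtained by exhausting $\bR^N$ with balls and using that every $\buc$-solution shares the far-field value $c_f$ forced by $H(0,\cdot)\equiv 0$ exactly as in Part~(i). This yields $\ran(I-\Delta t\hat H)=\buc$ with a nonexpansive resolvent $J_{\Delta t}$, i.e.\ the $m$-accretivity needed to run the Crandall--Liggett construction in the next subsection.

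\emph{Main obstacle.} The delicate point throughout is that viscosity comparison is a bounded-domain statement while $\bR^N$ is unbounded and the coefficients $\Phi^\pm_j(\vx)$ grow polynomially, so uniqueness and both limiting procedures ($h\to 0$ and the $\buc$-completion) must be localized. The mechanism that makes this work, and the step I would verify most carefully, is the interplay of the uniform-in-$h$ barrier of Proposition~\ref{prop:cp}(iv) with the identity $H(0,\vx)\equiv 0$: together they pin every solution to a single far-field constant outside a fixed ball, reducing comparison to balls and ensuring the far-field structure is genuinely preserved under both limits.
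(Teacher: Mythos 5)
Your proposal is correct and follows essentially the same route as the paper: (i) is read off from Proposition \ref{prop_h_limit} with uniqueness via the comparison principle localized to a ball, (ii) passes the discrete nonexpansive bound of Lemma \ref{lem_nonexp_d} to the limit $h\to 0$ using the pointwise convergence of (i), and (iii) is the same density/completion argument in $\buc$ with the nonexpansive estimate preserved in the limit. The only point where you go beyond the paper is the uniqueness claim in (iii), where your far-field-pinning argument from (i) does not transfer verbatim (a $\buc$ solution is only asymptotically, not exactly, constant on the far field of $\bR^N_+$), but since the paper itself only constructs the limit solution there, this does not change the assessment.
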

\begin{proof}
First, (i) is directly from Proposition \ref{prop_h_limit}.

Second,  
taking limit $h\to 0$ preserves the nonexpansive property of $J_{\Delta t}$. Indeed, assume $u_1$ and $u_2$ are two solutions to the resolvent problem \eqref{resolventP} with $f_1(\vx)$, $f_2(\vx)$ respectively. Then denote $f_1^h(\vxi) := f_1(\vxi),\,f_2^h(\vxi) := f_2(\vxi)$, the associated discrete solutions $u^h_1(\vxi)$ and $u^h_2(\vxi)$, 
 by Lemma \ref{lem_nonexp_d}, satisfies 
 \begin{equation}
 \|u^h_1(\cdot)-u^h_2(\cdot)\|_{\ell^\8} \leq \|f_1^h(\cdot)-f_2^h(\cdot)\|_{\ell^\8}.
 \end{equation}
 Then taking limit $h\to 0$ implies
 \begin{equation}
 \|u_1-u_2\|_{L^\8} \leq \lim_{h\to 0} \|u^h_1-u^h_2\|_{\ell^\8} \leq \lim_{h\to 0} \|f_1^h - f_2^h\|_{\ell^\8} = \|f_1-f_2\|_{L^\8}.
 \end{equation}
 This implies the monotonicity as in Lemma \ref{lem_nonexp_d}.
 
 Third, notice $\ccc$ is dense in $\buc$. For any $f\in \buc$, there exists a Cauchy sequence $f_k\in \ccc$ such that
$\|f_k - f\|_{\8} \to 0$. 
Then from the last step, we know
 the corresponding viscosity solutions $u_k\in \ccc$ of the resolvent problem with $f_k$ is also a Cauchy sequence in $\buc$. Take $u=\lim_{k\to +\8}u_k$ as the limit of $u_k$ in $\buc$. The non-expansive property is still maintained as in the last step.
\end{proof}

\subsubsection{$\Delta t \to 0$ converges to the semigroup solution}
Now we follow the framework in \cite{crandall1983viscosity, feng2006large} to obtain a strongly continuous nonlinear semigroup solution $u(\vx,t)$ to HJE.
It is indeed a viscosity solution to the original dynamic HJE \cite{crandall1983viscosity, feng2006large}. Given any $t>0$, denote $[\frac{t}{\Delta t}]$ as the integer part of  $\frac{t}{\Delta t}$. We 
next prove the convergence from the discrete solution of the backward Euler scheme \eqref{bEuler} to the viscosity solution of HJE \eqref{HJE2psi}.
\begin{thm}\label{thm_vis}
Assume ${\Phi}^\pm_j(\vx), \, \vx\in \bR^N$, after zero extension, is local Lipschitz continuous. Assume there exists (componentwisely) positive $\vm$ satisfying \eqref{mb_j}.
Given any $t>0$ and $\Delta t>0$, let $\psih^n(\vxi), \vxi\in\Omega_{\V}, n=1,\cdots,[\frac{t}{\Delta t}]$ be a solution to \eqref{bEuler} with initial data $\psih^0(\vxi)\in \ellc$. Assume  for any $\vx\in \mathbb{R}^N$, $u^0(\vx) = \lim_{\vxi\to \vx} \psih^0(\vxi)$ and  $u^0\in \ccc$.  Let $\hat{H}: D(\hat{H})\subset \buc \to \buc$ be the viscosity extension of $H$ defined in \eqref{operatorH}. Then  as $h\to 0$, $\Delta t \to 0$, we know
\begin{equation}
u(\vx,t) = \lim_{\Delta t\to 0} \bbs{(I - \Delta t \hat{H})^{-[t/\Delta t]} u^0 } = \lim_{\Delta t\to 0}\bbs{\lim_{h\to 0} (I - \Delta t H_{\V})^{-[t/\Delta t]} \psih^0 }\in \buc.
\end{equation}
This is the unique viscosity solution to HJE \eqref{HJE2psi}.
\end{thm}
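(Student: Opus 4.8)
The plan is to take the two limits in the order dictated by the nested structure of the statement: first $h\to0$ with $\Delta t$ frozen, which upgrades the discrete backward Euler scheme \eqref{bEuler} to the iterated \emph{viscosity} resolvent $(I-\Delta t\hat H)^{-[t/\Delta t]}$ applied to $u^0$; and then $\Delta t\to0$, which is \textsc{Crandall--Liggett}'s exponential formula \cite{CrandallLiggett} for the $m$-accretive operator $-\hat H$ on $\buc$, producing a strongly continuous contraction semigroup whose orbit is, by the \textsc{Crandall--Lions}/\textsc{Feng--Kurtz} argument \cite{crandall1983viscosity,feng2006large}, a viscosity solution of \eqref{HJE2psi}.

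\textbf{Step 1 ($h\to0$, $\Delta t$ fixed).} Write $\psih^{n}:=(I-\Delta t H_{\V})^{-n}\psih^0=J_{\Delta t,h}^{\,n}\psih^0$ and $u^{n}:=(I-\Delta t\hat H)^{-n}u^0=J_{\Delta t}^{\,n}u^0$. I would prove by induction on $n=0,1,\dots,[t/\Delta t]$ that $u^n\in\ccc$, that every $\psih^n$ is constant outside the ball of radius $(\|\vm\|/m_1)^nR_0$ (where $\psih^0$ is constant outside radius $R_0$), and that $\psih^n\to u^n$ uniformly on $\mathbb R^N$ as $h\to0$. The case $n=0$ is the hypothesis. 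For the step $n-1\mapsto n$ one cannot apply Proposition \ref{prop_h_limit} directly since the right-hand side $\psih^{n-1}$ is $h$-dependent; instead compare with $\tilde\psih^{n}:=J_{\Delta t,h}\bigl(u^{n-1}|_{\Omega_{\V}}\bigr)$, the discrete resolvent solution with the \emph{fixed} continuous right-hand side $u^{n-1}\in\ccc$. By Proposition \ref{prop_h_limit} together with Corollary \ref{cor:semi}(i), the upper and lower half-relaxed limits of $\tilde\psih^{n}$ coincide and equal the unique viscosity solution $J_{\Delta t}u^{n-1}=u^{n}\in\ccc$; since all functions involved are constant outside a common ball (Proposition \ref{prop:cp}(iv)), this local uniform convergence is in fact uniform. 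Meanwhile Lemma \ref{lem_nonexp_d}(ii) gives $\|\psih^{n}-\tilde\psih^{n}\|_{\8}\le\|\psih^{n-1}-u^{n-1}|_{\Omega_{\V}}\|_{\8}\to0$ by the inductive hypothesis, so $\psih^{n}\to u^{n}$ uniformly, and Proposition \ref{prop:cp}(iv) keeps $u^{n}\in\ccc$. Taking $n=[t/\Delta t]$ yields
\begin{equation*}
\lim_{h\to0}(I-\Delta t H_{\V})^{-[t/\Delta t]}\psih^0=(I-\Delta t\hat H)^{-[t/\Delta t]}u^0=:v^{\Delta t}\in\ccc .
\end{equation*}

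\textbf{Step 2 ($\Delta t\to0$).} By Corollary \ref{cor:semi}(iii) the resolvent $J_{\Delta t}=(I-\Delta t\hat H)^{-1}$ is defined on all of $\buc$ for every $\Delta t>0$ and is non-expansive, so $-\hat H$ is $m$-accretive on the Banach space $\buc$; by Lemma \ref{lem_inc}, $\overline{D(\hat H)}^{\|\cdot\|_\8}=C_{0*}(\bR^N)$. Since $u^0\in\ccc\subset C_{0*}(\bR^N)=\overline{D(\hat H)}$, the \textsc{Crandall--Liggett} theorem \cite{CrandallLiggett} gives that
\begin{equation*}
u(\cdot,t):=\lim_{\Delta t\to0}(I-\Delta t\hat H)^{-[t/\Delta t]}u^0
\end{equation*}
exists in $C_{0*}(\bR^N)$, uniformly for $t$ in compact intervals, and $(u(\cdot,t))_{t\ge0}$ is a strongly continuous semigroup of contractions mapping $C_{0*}(\bR^N)$ into itself, with $u(\cdot,0)=u^0$. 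Combined with Step 1 this establishes both displayed equalities in the statement and the membership $u(\vx,t)\in C_{0*}(\bR^N)$.

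\textbf{Step 3 (the limit is a viscosity solution), and the main obstacle.} Because $H(\vp,\vx)$ has been extended by $\Phi^\pm_j\equiv0$ off $\bR^N_+$, equation \eqref{HJE2psi} is posed on all of $\mathbb R^N$ with no separate boundary condition — the ``no reaction'' constraint is already encoded — so it remains only to check that $u$ is a viscosity sub- and supersolution of \eqref{HJE2psi} on $\mathbb R^N\times(0,+\8)$. This is the standard argument of \cite{crandall1983viscosity,feng2006large}: given a smooth $\varphi$ for which $u-\varphi$ has a strict local maximum at $(\vx_0,t_0)$ with $t_0>0$, one uses the locally uniform convergence $u(\cdot,t_0)=\lim_{\Delta t\to0}J_{\Delta t}\bigl(u(\cdot,t_0-\Delta t)+o(1)\bigr)$ from Step 2 together with the fact (Corollary \ref{cor:semi}) that $J_{\Delta t}f$ is the \emph{viscosity} solution of the stationary resolvent equation; testing that resolvent equation near its maximum point with $\varphi(\cdot,t_0)$ shifted by a constant and dividing by $\Delta t$ produces, in the limit $\Delta t\to0$, the subsolution inequality $\partial_t\varphi(\vx_0,t_0)\le H(\nabla\varphi(\vx_0,t_0),\vx_0)$; the supersolution inequality is symmetric, using the lower half-relaxed limit. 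The initial condition is attained by the strong continuity of Step 2. I expect this step to be the real obstacle: transferring the stationary (resolvent) viscosity property through the backward Euler composition and passing to the limit $\Delta t\to0$ requires care with the placement of test functions and with the $o(1)$ error from the floor $[t/\Delta t]$; the rest is bookkeeping, the one point worth noting being that the support radius $(\|\vm\|/m_1)^{[t/\Delta t]}R_0$ of $v^{\Delta t}$ is finite for each fixed $\Delta t$, so every intermediate Euler iterate lives in $\ccc$ where Proposition \ref{prop_h_limit} and Corollary \ref{cor:semi}(i) apply, and only the final $\Delta t\to0$ limit leaves $\ccc$ and lands in $C_{0*}(\bR^N)$.
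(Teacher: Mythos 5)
Your proposal follows essentially the same route as the paper: at fixed $\Delta t$, iterate the $h\to 0$ resolvent convergence (Proposition \ref{prop_h_limit}, Corollary \ref{cor:semi}, with the barrier/constancy control of Proposition \ref{prop:cp}) across the backward Euler steps, then invoke the $m$-accretivity of $-\hat{H}$ on $\buc$ together with Lemma \ref{lem_inc} and Crandall--Liggett as $\Delta t\to 0$, deferring the final viscosity-solution property of the semigroup orbit to the standard Crandall--Lions/Feng--Kurtz argument exactly as the paper does by citation. Your inductive comparison with $J_{\Delta t,h}\bigl(u^{n-1}|_{\Omega_{\V}}\bigr)$ via the nonexpansiveness in Lemma \ref{lem_nonexp_d} is in fact a more careful treatment of the $h$-dependent right-hand side than the paper's terse ``repeating $[t/\Delta t]$ times,'' but it is the same argument, not a different route.
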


\begin{proof}
First, take $u^0\in \ccc$, fix $\Delta t$ and denote the solution to resolvent problem \eqref{resolventP_n} as $\psih^1(\vxi)$ satisfying
\begin{equation}
(I-\Delta t H_{\V}) \psih^1(\vxi) = \psih^0(\vxi)\in \ellc.
\end{equation}
Then by Proposition \ref{prop_h_limit}, since $u^0(\vx) = \lim_{\vxi\to \vx} \psih^0(\vxi)\in \ccc$, we know as $h\to 0$, $u^1\in \ccc$ is a viscosity solution to the continuous resolvent problem
\begin{equation}
(I-\Delta t H) u^1(\vxi) = u^0(\vxi).
\end{equation}
In the resolvent form, we have
\begin{equation}\label{maximal}
u^1 = (I-\Delta t \hat{H})^{-1}u^0 = \lim_{h\to 0} (I-\Delta H_{\V})^{-1}\psih^0\in \ccc .
\end{equation}
Repeating $[\frac{t}{\Delta t}]=:n$ times, we conclude that for any $\vx$ and $\vxi\to \vx$ as $h\to 0$,
\begin{equation}\label{conver_be}
u^n(\vx)=\bbs{(I - \Delta t \hat{H})^{-[t/\Delta t]} u^0}(\vx) = \lim_{h\to 0} \bbs{(I - \Delta t H_{\V})^{-[t/\Delta t]} \psih^0}(\vxi) = \lim_{h\to 0} u^n_{\V}(\vxi)\in \ccc.
\end{equation}
Second, the non-expansive property and the maximality of $-\hat{H}$ in $\buc$ are proved in Corollary \ref{cor:semi}.
Thus we conclude $-\hat{H}$ is m-accretive operator on  $\buc$. From inclusion \eqref{inc2} in Lemma \ref{lem_inc}, we know $u^0 \in \buc\subset \overline{D(\hat{H})}^{\|\cdot\|_\8}$, then by Crandall-Liggett's nonlinear semigroup theory
 \cite{CrandallLiggett}, $\hat{H}$  generates a strongly continuous nonexpansive semigroup 
\begin{equation}
u(\vx,t) = \lim_{\Delta t\to 0} (I - \Delta t \hat{H})^{-[t/\Delta t]} u^0 =:S(t)u^0,
\end{equation}
which is the solution to HJE \eqref{HJE2psi}. 
\end{proof}

\section{Short time classical solution and error estimate}\label{sec5}

In this section, we give   error estimates between the discrete solution of monotone scheme \eqref{upwind} (i.e., discrete nonlinear semigroup) and the classical solution of HJE for a short time.

Notice the chemical flux has a polynomial growth at the far field so usual methods for error estimate do not work. However, we observe that for special initial data $u^0\in C^1_{c}(\bR^{N*})$, the dynamic classical solution $u$ still belongs to $C^1_{c}(\bR^{N*})$ for a short time.
This can be seen from the characteristic method for a short time classical solution. Starting from any initial data $(\vx(0), \vp(0))$, construct the bi-characteristics $\vx(t), \vp(t)$ for HJE
\begin{equation}\label{cc}
\begin{aligned}
\dot{\vec{x}} = \nabla_p H(\vec{p}, \vec{x}), \quad \vec{x}(0) = \vec{x}_0,\\
\dot{\vec{p}} = -\nabla_x H(\vec{p}, \vec{x}), \quad \vec{p}(0) = \nabla \psi_0(\vec{x}_0)
\end{aligned}
\end{equation}
upto some $T>0$ such that the characteristics exist uniquely.   Then along characteristics,  with $\vec{p}(t) = \nabla_xu( \vec{x}(t), t )$,  we know $z(t)=u(\vx(t),t)$ satisfies
\begin{equation}\label{tm_zz}
\begin{aligned}
\dot{z} =\nabla_x u(\vx(t),t) \cdot \dot{\vx} + \pt_t u(\vx(t),t) =\vec{p} \cdot \nabla_p H(\vec{p},\vec{x}) +H(\vec{p}, \vec{x}), \quad z(0) = u_0(\vec{x}_0).
\end{aligned}
\end{equation} 
Hence after solving \eqref{cc}, we can solve for $z(t)=u(\vx(t),t).$
 
Since $u^0\in C^1_{c}(\bR^{N*})$, we know for $|\vx_0|\gg 1$, $\vp_0 = 0$. Then as long as bi-characteristic is unique up to $T$, $\vp \equiv 0$ due to $\nabla_x H(\vec{0}, \vx)=0$. This, together with \eqref{tm_zz} and $H(\vec{0}, \vx)=0$, implies $\dot{z} = 0 $ up to $T$. Thus we conclude 
\begin{equation}\label{compact}
u(\vx,t)= \text{ const  \,\, for } |\vx|\gg 1.
\end{equation}

Recall $\Delta t$ is the time step and the size of a container for chemical reactions is  $\frac{1}{h}$.
Next, based on the above observation, we give the error estimate for initial data $u^0\in C^1_{c}(\bR^{N*}).$
\begin{prop}
Assume ${\Phi}^\pm_j(\vx), \, \vx\in \bR^N$, after zero extension, is locally Lipschitz continuous.
Let $T>0$ be the maximal time such that bi-characteristic $\vx(t), \vp(t)$ for HJE exists uniquely.    Given any initial data $\psih^0$ for the backward equation  \eqref{upwind}, assume there exists $u^0$ such that $\|\psih^0 -u^0\|_{\8} \leq Ch$. Then  we have
\begin{enumerate}[(i)]
\item the error estimate between backward Euler scheme solution $\psih^n(\vxi)$ to \eqref{bEuler} and the classical solution $u(\vx, t^n)$ to HJE \eqref{HJE2psi}
\begin{equation}
\begin{aligned}
\|\psih^n - u(\cdot, t^n)\|_{\8} 
\leq C (T+1) (\Delta t +h). 
\end{aligned}
\end{equation}
\item the error estimate between monotone scheme solution $\psih(\vxi, t)$ to  \eqref{upwind} and the classical solution $u(\vx, t)$ to HJE \eqref{HJE2psi}
\begin{equation}
\|\psih(\cdot,t)  - u(\cdot, t)\|_{\8} \leq C h, \quad \forall 0<t\leq T.
\end{equation}
\end{enumerate}
\end{prop}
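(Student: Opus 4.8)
The plan is to run the classical consistency-plus-stability argument for monotone schemes (Crandall--Lions, Souganidis, Barles--Souganidis), with the single twist that the polynomial growth of the fluxes $\Phi^\pm_j$ is neutralized by the compact-support-modulo-constant property \eqref{compact} of the short time classical solution. Throughout, $u^0\in C^1_{c*}(\bR^N)$ is the initial profile (so that $\psih^0\in\ell^\8_{c*}(\Omega_{\V})$ and $\|\psih^0-u^0\|_\8\le Ch$), and $u(\cdot,t)$ denotes the classical solution of \eqref{HJE2psi} on $[0,T]$ obtained from the characteristic system \eqref{cc}.

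First I would record the a priori regularity of $u$ coming from the discussion preceding the statement. Since the bi-characteristics exist and do not cross on $[0,T]$, the solution depends smoothly on the starting point there; hence $u$ is $C^2$ in $\vx$ and $C^1$ in $t$ on $\bR^N\times[0,T]$, with $\|\nabla u\|_\8$, $\|\nabla^2 u\|_\8$, $\|\pt_t u\|_\8$, $\|\pt_{tt} u\|_\8$ bounded by a constant $C_0=C_0(T)$, and — by \eqref{compact} — all spatial derivatives of $u$ supported in a fixed ball $\{|\vx|\le R_0\}$ independent of $t$. On this fixed compact set the $\Phi^\pm_j$ are bounded and, by Stirling's formula together with the local Lipschitz hypothesis, $|\tilde\Phi^\pm_j(\vxi)-\Phi^\pm_j(\vxi)|\le C_0 h$; outside the ball $\nabla u$, $\Phi^\pm_j$ and $\tilde\Phi^\pm_j$ all vanish. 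This is exactly the place where the polynomial growth never switches on.

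Next I would estimate the local truncation error of the backward Euler scheme \eqref{bEuler} on the exact solution $\tilde u^n(\vxi):=u(\vxi,t^n)$, $t^n=n\Delta t$. By the $C^2$ bounds of the previous step, $\tfrac1h(\tilde u^n(\vxi\pm\vec{\nu}_j h)-\tilde u^n(\vxi))=\pm\vec{\nu}_j\cdot\nabla u(\vxi,t^n)+O(C_0 h)$, hence each exponential difference in $H_{\V}$ equals $e^{\pm\vec{\nu}_j\cdot\nabla u(\vxi,t^n)}-1+O(C_0 h)$; multiplying by the bounded $\tilde\Phi^\pm_j$, inserting the $O(h)$ Stirling bound for $\tilde\Phi^\pm_j-\Phi^\pm_j$, and summing in $j$ gives $H_{\V}(\vxi,\tilde u^n(\vxi),\tilde u^n)=H(\nabla u(\vxi,t^n),\vxi)+O(C_0 h)$, with both sides vanishing for $|\vxi|>R_0$. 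Together with a Taylor expansion in time and the PDE $\pt_t u=H(\nabla u,\vx)$, this yields a truncation error $E_n$ with $\|E_n\|_\8\le C_0\,\Delta t\,(\Delta t+h)$, and $E_n\equiv0$ on the frozen grids $\omn$. Since $\psih^n=J_{\Delta t,h}\psih^{n-1}$ and $\tilde u^n=J_{\Delta t,h}(\tilde u^{n-1}+E_n)$, the nonexpansiveness of $J_{\Delta t,h}$ from Lemma \ref{lem_nonexp_d}(ii) gives $\|\psih^n-\tilde u^n\|_\8\le\|\psih^{n-1}-\tilde u^{n-1}\|_\8+\|E_n\|_\8$; iterating over $n=[t/\Delta t]$ steps (whose total length is $\le T$) from $\|\psih^0-u^0\|_\8\le Ch$ yields $\|\psih^n-u(\cdot,t^n)\|_\8\le Ch+C_0 T(\Delta t+h)\le C(T+1)(\Delta t+h)$, which is (i). For (ii) I would fix $t\in(0,T]$ and let $\Delta t\to0$ with $t^n\to t$: by Theorem \ref{thm:backwardEC} the Euler iterates converge in $\ell^\8(\Omega_{\V})$ to $\psih(\cdot,t)$ while $u(\cdot,t^n)\to u(\cdot,t)$ by time continuity, so the $\Delta t$ term drops out and $\|\psih(\cdot,t)-u(\cdot,t)\|_\8\le C(T+1)h$; alternatively one compares $\psih(\cdot,t)$ directly with $u$ through the time-continuous scheme \eqref{upwind}, using that $u$ solves $\pt_t v=H_{\V}(v)+r$ with $\|r\|_\8\le C_0 h$ on $[0,T]$ and that $-H_{\V}$ is accretive (Theorem \ref{thm:backwardEC}).

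The main obstacle I anticipate is making the regularity step airtight rather than the bookkeeping that follows it: one must know that the short time classical solution is genuinely $C^2$ in $\vx$ with derivatives bounded uniformly on $[0,T]$ and with strictly compact spatial support, and — more delicately — that the region $\{\nabla u\ne0\}$ stays away from $\pt\bR^N_+$ so that on the boundary grids the discrete `no reaction' constraint and the PDE's frozen set $\{\Phi^\pm_j=0\}$ agree and generate no truncation error. This is precisely the content of the characteristics analysis around \eqref{cc}--\eqref{compact}; once it is granted, the polynomial growth of the Hamiltonian is never felt and the rest is the routine consistency-and-stability estimate above. A secondary, entirely routine point is the $O(h)$ Stirling bound for $\tilde\Phi^\pm_j-\Phi^\pm_j$ on compacts, which uses the local Lipschitz hypothesis on the zero extension of $\Phi^\pm_j$.
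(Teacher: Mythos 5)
Your proposal is correct and follows essentially the same route as the paper: exploit the finite propagation of support \eqref{compact} so the polynomial growth of the fluxes never enters, establish an $O(\Delta t(\Delta t+h))$ truncation error by Taylor expansion of the $C^2$ classical solution in the discrete resolvent, propagate it with the nonexpansive property of $J_{\Delta t,h}$ from Lemma \ref{lem_nonexp_d}, and then send $\Delta t\to0$ (via the semigroup convergence of Theorem \ref{thm:backwardEC}) to obtain the $O(h)$ bound in (ii). Your additional remarks — the explicit Stirling-type bound $|\tilde\Phi^\pm_j-\Phi^\pm_j|=O(h)$ on compacts and the check that the discrete `no reaction' constraint produces no extra truncation error near $\pt\bR^N_+$ — are details the paper leaves implicit, and including them only strengthens the argument.
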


\begin{proof}
Let $T$ be the maximal time  such that bi-characteristic $\vx(t), \vp(t)$ for HJE exists uniquely and thus from the uniqueness of solution to HJE, the viscosity solution obtained in Theorem \ref{thm_vis} is the unique classical solution. Then from analysis for \eqref{compact}, $u$ is constant outside $B_R$.

(i) We perform the truncation error estimate for $u(\vx, t)\in C^2$. By Taylor expansion, plugging  $u(\vx, t^n)$ to the resolvent problem, we have
\begin{equation}
u(\vx, t^n) - \Delta t H(\vx, \nabla u(\vx, t^n)) = u(\vx,t^{n-1}) + O(\Delta t^2).
\end{equation} 
Then we further plug $u(\vxi, t^n)$ into the discrete resolvent problem. Since $H=0$ for $\vx\in B_R^c$, the polynomial growth outside $B_R$ does not affect the truncation error and thus
 \begin{equation}
u(\vxi, t^n) - \Delta t H_{\V}(\vxi, \nabla u(\vxi, t^n)) = u(\vxi,t^{n-1}) + O(\Delta t^2+  h\Delta t ).
\end{equation}
(ii) We perform error estimate using the nonexpansive property for the discrete resolvent in Lemma \ref{lem_nonexp_d}. Recall the numerical solution obtained in backward Euler scheme
\begin{equation}\label{bEuler_n}
 \psih^{n}(\vxi) -\Delta t  H_{\V}(\vxi, \psih^{n}(\vxi), \psih^n) = \psih^{n-1}(\vxi). 
\end{equation}
Then by Lemma \ref{lem_nonexp_d}, we have
\begin{equation}
\begin{aligned}
\|\psih^n - u(\cdot, t^n)\|_{\8} &\leq \|\psih^{n-1} - u(\cdot, t^{n-1})\|_{\8} + C\Delta t(\Delta t + h) \\
&\leq \cdots \leq \|\psih^{0} - u(\cdot, 0)\|_{\8} + C n\Delta t(\Delta t +h)
\leq C (T+1) (\Delta t +h). 
\end{aligned}
\end{equation}
Notice this is linear growth in time. Changing the above resolvent problem to the  original CME in the HJE form \eqref{upwind}, we have the first order convergence
\begin{equation}
\|\psih(\cdot, t)  - u(\cdot, t)\|_{\8} \leq C h, \quad \forall 0<t\leq T.
\end{equation}
\end{proof}

Under the observation of the finite time propagation of support \eqref{compact},  the error estimates for monotone schemes to classical solutions of HJE are standard; see abstract theorem in \cite{souganidis1985approximation}.   We remark that if the solution to  HJE \eqref{HJE2psi} lose the regularity and becomes only locally Lipschitz, then the convergence rate in terms of $h$ can be at most $h^{\frac12}$; c.f. \cite{crandall1984two, waagan2008convergence, souganidis1985approximation}.

\section{Application to large deviation principle for chemical reactions at single time}
\label{sec6}
In previous sections, we proved the convergence from the solution to  monotone scheme \eqref{upwind}, i.e., Varadhan's nonlinear semigroup \eqref{semigroup}, to the global viscosity solution to HJE \eqref{HJE2psi}. In this section, we discuss some straightforward applications of this convergence result, particularly the large deviation principle and the law of large numbers at a single time. 

First, we will discuss the exponential tightness of the large size process $\cv$ at single times.  
Then, using the   Lax–Oleinik semigroup representation for first order HJE, the convergence result in Theorem \ref{thm_vis} implies  the convergence from Varadhan's discrete nonlinear semigroup to the continuous Lax–Oleinik semigroup. Therefore, together with the exponential tightness, we obtain the large deviation principle at a single time point in Theorem \ref{thm_dp}.  As long as one can prove the exponential tightness in Skorokhod space $D([0,T];\Omega_{\V})$,  by using \cite[Theorem 4.28]{feng2006large},   the large deviation principle for finite many time points will lead to the  sample path large deviation principle for the chemical reaction process $\cv.$ We leave the  exponential tightness in Skorokhod  space $D([0,T];\Omega_{\V})$ to future study.    The  exponential tightness for $(\cv(\cdot))$ in path space was  proved in \cite[Lemma 2.1]{Dembo18} under  the assumption that there exists a growth estimate for a Lyapunov function.  For finite states continuous time Markov chain, the exponential tightness for $(\cv(\cdot))$ in path space was  proved by \cite{Mielke_Renger_Peletier_2014}.
  
Second, the large deviation principle also implies the mean-field limit of the CME \eqref{rp_eq} is the RRE \eqref{odex}, but with a more explicit rate of the concentration of measures. The mean-field limit of the CME for chemical reactions was first proved by Kurtz \cite{kurtz1970solutions, Kurtz71}. Recent results in \cite{{maas2020modeling}}   proves the evolutionary $\Gamma$-convergence from   CME to the Liouville equation corresponding to RRE under the detailed balance assumption.

\subsection{Large deviation principle for chemical reaction at single times}
 In this section, we prove the large deviation principle of the random variable $\cv(t)$ at each time. Precisely, 
\begin{defn}\label{def_ld} 
Let $\cv $ be the large size process   defined in \eqref{Csde} and  $\cv (0)=\vx_0^{\V}\in\bR^N_+$. Assume $\vx_0^{\V} \to \vx_0 \in \bR^N_+$.  Then at each time $t$, we say the random variable $\cv (t)$  satisfies the large deviation principle in $\bR^N_+$ with a good rate function $I(\vy; \,\vx_0,t)$ defined in \eqref{LO} if
 for any open set $\mathcal{O}\subset  \bR^N_+$, it holds
\begin{equation}
\liminf_{h\to 0} h \log \bP_{\vx_0^{\V}} \{\cv (t) \in  \mathcal{O}\} \geq - \inf_{\vy\in \mathcal{O}} I(\vy;\,\vx_0, t)
\end{equation}
and for any closed set $\mathcal{C}\subset \bR^N_+$, it holds
\begin{align}
\limsup_{h \to 0} h \log \bP_{\vx_0^{\V}} \{\cv (t) \in\mathcal{C}\} \leq - \inf_{\vy\in \mathcal{C}} I(\vy;\,\vx_0, t).
\end{align}
\label{LD11}
Here  $I(\vy; \vx_0,t)$ is a good rate function means the sublevel set $\{\vy\in \bR^N_+; \, I(\vy;\vx_0, t)\leq \ell\}$ is compact for any $\ell$. This sublevel set compactness automatically implies $I$ is lower semicontinuous.
\end{defn}

From the theory of Hamilton-Jacobi equation (or the deterministic optimal control formulation), it is well-known that the viscosity solution to first order HJE can be expressed as the Lax-Oleinik semigroup, cf.   \cite[Theorem 2.22]{Tran21},\cite{evans2008weak}.
\begin{lem}[Lax-Oleinik semigroup]
Assume ${\Phi}^\pm_j(\vx), \, \vx\in \bR^N$, after zero extension, is locally Lipschitz continuous.
The viscosity solution $u(\vx,t)$ to HJE \eqref{HJE2psi} can be represented as the   Lax–Oleinik semigroup, i.e., \eqref{LO}
\begin{equation}
u(\vx,t) = \sup_{\vy  } \bbs{u_0(\vy) - I(\vy; \vx,t)},  \quad I(\vy; \vx,t) := \inf_{\gamma(0)=\vx, \gamma(t)=\vy} \int_0^t L(\dot{\gamma}(s),\gamma(s)) \ud s,
\end{equation}
where $L(\vs,\vx)$ is the convex conjugate of $H(\vp,\vx)$.
\end{lem}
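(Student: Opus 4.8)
The plan is to recognize the right-hand side of \eqref{LO} as the value function of a Bolza-type deterministic optimal control problem and then identify it with the semigroup solution of Theorem \ref{thm_vis} via the dynamic programming principle together with the uniqueness (comparison principle) of viscosity solutions in $\buc$. Concretely, I would first record the structural facts about $H$ and $L$: for each fixed $\vx$ the map $\vp\mapsto H(\vp,\vx)$ is a finite sum of convex exponentials, hence convex and lower semicontinuous with $H(\vec 0,\vx)=0$, so its Legendre conjugate $L(\vs,\vx)=\sup_{\vp}\bbs{\vp\cdot\vs-H(\vp,\vx)}$ is convex, lower semicontinuous, $[0,+\infty]$-valued, and Fenchel--Moreau gives back $L^*(\vp,\vx)=H(\vp,\vx)$. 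The mass-balance identity \eqref{mb_j} forces $H(\vp+c\vm,\vx)=H(\vp,\vx)$ for all $c\in\bR$, so $L(\vs,\vx)=+\infty$ whenever $\vm\cdot\vs\neq 0$; and the zero extension \eqref{exLMA} makes $H\equiv 0$, $L(\vec 0,\cdot)=0$, $L(\vs,\cdot)=+\infty$ for $\vs\neq\vec 0$ outside $\bR^N_+$, which encodes the ``no reaction'' confinement. The minus sign in front of $H$ in \eqref{HJE2psi} is exactly why the representation is a supremum (a ``time-reversed'' Lax--Oleinik formula) rather than an infimum: the underlying control problem is a maximization with terminal profit $u_0$ and running cost $L$.

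Next I would set $\hat u(\vx,t):=\sup\{u_0(\gamma(t))-\int_0^t L(\dot\gamma(s),\gamma(s))\ud s\}$ over absolutely continuous curves $\gamma$ with $\gamma(0)=\vx$ --- this is precisely the right-hand side of \eqref{LO} --- and establish: (a) the dynamic programming principle $\hat u(\vx,t)=\sup_{\gamma(0)=\vx}\bbs{\hat u(\gamma(r),t-r)-\int_0^r L(\dot\gamma,\gamma)\ud s}$ for $0<r<t$; (b) $\hat u(\vx,0)=u_0(\vx)$ together with $\hat u(\cdot,t)\to u_0$ uniformly as $t\to 0$, using uniform continuity of $u_0\in\buc$ and the zero-cost curve, which moves along the RRE drift $\sum_j \vec\nu_j(\Phi^+_j-\Phi^-_j)$; (c) $\hat u(\cdot,t)\in\buc$ with the same far-field constant and $\inf u_0\le\hat u\le\sup u_0$, which follows since every finite-cost curve satisfies $\vm\cdot\gamma(s)\equiv\vm\cdot\vx$, so the trajectory is pinned to a single mass level set, on which $u_0$ is close to its far-field value once $|\vx|$ is large. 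Then a routine test-function argument from (a) --- exactly as in \cite[Theorem 2.22]{Tran21}, \cite{evans2008weak}, \cite[Ch.~III]{bardi1997optimal} --- shows $\hat u$ is a viscosity solution of $\pt_t\hat u=L^*(\nabla\hat u,\vx)=H(\nabla\hat u,\vx)$, i.e.\ of \eqref{HJE2psi}, with datum $u_0$; by the comparison principle behind Theorem \ref{thm_vis} and Corollary \ref{cor:semi} one concludes $\hat u=S(t)u_0=u(\vx,t)$.

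The main obstacle is that $H$ is genuinely \emph{not} coercive --- it is constant along the mass direction $\vm$ and degenerates to $0$ on $\bR^N\setminus\bR^N_+$ --- so $L$ is extended-real-valued and the textbook Hopf--Lax/Lax--Oleinik formulas do not apply verbatim; the argument must be run with the optimal-control machinery for $[0,+\infty]$-valued Lagrangians. The workaround is precisely the mass conservation \eqref{mb_j}: finite-cost curves are confined to a compact mass level set intersected with $\bR^N_+$, on which $H$ is superlinear in the directions transverse to $\vm$ (when the active reaction vectors span $\vm^\perp$), giving Tonelli-type existence of minimizers for $I(\vy;\vx,t)$ and of the near-optimal curves needed for the subsolution inequality; the general case is absorbed into the robust lower-semicontinuous framework. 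Once this confinement is set up, steps (a)--(c) and the viscosity verification are standard, and the identification $u=\hat u$ closes the proof.
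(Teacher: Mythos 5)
Your outline is correct, but it is worth noting that the paper does not actually prove this lemma: it simply cites the standard optimal-control representation (\cite[Theorem 2.22]{Tran21}, \cite{evans2008weak}), whereas you supply the argument those references give and, more importantly, adapt it to the features that make the citation not quite verbatim here. Your observations that $H(\vp+c\vm,\vx)=H(\vp,\vx)$ forces $L(\vs,\vx)=+\infty$ off the mass hyperplane, that the zero extension freezes the dynamics outside $\bR^N_+$, and that finite-cost curves are therefore confined to the compact sets $\{\vy\in\bR^N_+:\vm\cdot\vy=\vm\cdot\vx\}$ are exactly the right substitutes for the coercivity/uniform-continuity hypotheses of the textbook Hopf--Lax/Lax--Oleinik theorems; the confinement also gives the locally uniform superlinearity $L(\vs,\vx)\ge\lambda|\vs|-C(K,\lambda)$ needed for the short-time attainment of the initial data and for the far-field constancy of $\hat u(\cdot,t)$, so steps (a)--(c) and the DPP verification go through. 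What your approach buys is a self-contained justification in the degenerate, polynomially-growing setting that the paper leaves implicit; what the paper's citation buys is brevity, at the cost of invoking results whose stated hypotheses the present Hamiltonian does not literally satisfy.

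The one step you should tighten is the final identification $\hat u=S(t)u_0$. Corollary \ref{cor:semi} gives comparison/non-expansiveness only for the stationary resolvent problem, and Theorem \ref{thm_vis} constructs the semigroup solution via Crandall--Liggett; neither statement is a comparison principle for the time-dependent HJE \eqref{HJE2psi} on $\bR^N$ with polynomially growing coefficients, so ``by the comparison principle behind Theorem \ref{thm_vis}'' is not something you can quote directly. Two standard repairs fit the paper's framework: either localize the evolutionary comparison argument to large balls using the finite-propagation/constant-far-field structure you have already established (for data in $\ccc$, both $\hat u$ and $u$ are constant outside a fixed compact set on $[0,T]$, so the classical comparison theorem in bounded domains applies), or bypass evolutionary comparison altogether by checking that the Lax--Oleinik family defines a non-expansive semigroup whose resolvent solves $u-\Delta t\,\hat H u=f$ in the viscosity sense, and then invoke the uniqueness already proved at the resolvent level together with Crandall--Liggett. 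With that point addressed, your proof is complete and strictly more informative than the paper's citation.
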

We can directly verify the following semigroup property for the Lax-Oleinik representation for viscosity solution \eqref{LO}
\begin{equation}
u(\vx,t) = \sup_{\vy} \bbs{ u(\vy, t-\tau) - I(\vy; \, \vx, \tau) }, \quad 0\leq \tau \leq t.
\end{equation}
This is also known as the dynamic programming principle.

\begin{thm}\label{thm_dp}
Assume ${\Phi}^\pm_j(\vx), \, \vx\in \bR^N$, after zero extension, is locally Lipschitz continuous. Assume there exists (componentwisely) positive $\vm$ satisfying \eqref{mb_j}.
 Let $\cv (0)=\vx_0^{\V} \to \vx_0$ in $\bR^N_+$. Then the chemical reaction process $\cv(t)$ at each time $t$ satisfies the  large deviation principle  with a good rate function $I(y;\vx_0,t)$ as in Definition \ref{def_ld}.
\end{thm}

\begin{proof}
First,   the    exponential tightness of the process $\cv$ is essential for large deviation analysis.
Using Lemma \ref{tight2}, the existence of positive $\vm$ and the fixed initial position $\cv (0)$ ensure that   $\cv$ is   exponentially tight for any $t$.

Second, 
from Varadhan's lemma \cite{Varadhan_1966} on the necessary condition of the large deviation principle and Bryc's theorem \cite{Bryc_1990} on the sufficient condition, we know $\cv$ satisfies the large deviation principle with a good rate function $I(y)$, if and only if for any bounded continuous function $u_0(\vx)$, 
\begin{equation}\label{var}
\lim_{h\to 0} h \log \bE^{\vx}\bbs{ e^{\frac{u_0(\cv(t))}{h}} } =u(\vx,t) = \sup_{y} \bbs{u_0(y) - I(y; x,t)}.
\end{equation}

Then we show \eqref{var} holds. From the WKB reformulation for the backward equation \eqref{wkb_b}
 $$\uh(\vxv,t) =h \log w_{\V}(\vxv,t) = h \log \bE^{\vxv}\bbs{e^{ \frac{u_0(\cv_t)}{h}}},$$
we show below the pointwise convergence  
 for any $t\in [0,T]$,
\begin{equation}\label{con1}
 \lim_{h\to 0} h \log \bE^{\vx}\bbs{ e^{\frac{u_0(\cv_t)}{h}} } =u(\vx,t).
\end{equation} 
 
For any $t\in[0,T]$,  $\eps>0$, from Theorem \ref{thm_vis}, we know there exists $\Delta t_0$ such that for any $\Delta t\leq \Delta t_0$, $n=[t/\Delta t]$, the estimate between the solution $u(\vx,t)$ to HJE and the solution $u^n(\vx)$ to the backward Euler approximation \eqref{resolventP} satisfies
$
\|u(\cdot,t) - u^n(\cdot)\|_{L^\8} \leq \frac13\eps.
$
For this $u^n(\vx)$, at any fixed $\vx\in \mathbb{R}^N$, from \eqref{conver_be}, there exists $h_0$ such that for any $h\leq h_0$, the solution $u_{\V}^n(\vxi)$ to the discrete \eqref{bEuler} converges to $u^n(\vx)$ pointwisely. That is to say, exists $h_0$, such that for $h\leq h_0$,
$
|u^n(\vx)-u^n_{\V}(\vxi)| \leq \frac{1}{3}\eps.
$
From the convergence of backward Euler scheme \eqref{bEuler} to the ODE system \eqref{odex} in \eqref{semi27}, we obtain another $\frac13 \eps$ and hence \eqref{con1} holds.
 
 Meanwhile, $u(x,t)$ has the   Lax–Oleinik semigroup representation \ref{LO}. Hence we conclude \eqref{var}.
\end{proof}

As a consequence of the above large deviation principle, we state below the law of large numbers gives the mean-field limit ODE \eqref{odex}.
\begin{cor}\label{cor:ode}
Assume ${\Phi}^\pm_j(\vx), \, \vx\in \bR^N$, after zero extension, is locally Lipschitz continuous. Assume there exists (componentwisely) positive $\vm$ satisfying \eqref{mb_j}.  Let $\cv (0)=\vx_0^{\V} \to \vx_0$ in $\bR^N_+$.
Let $\vx^*(t)$ be the solution to RRE \eqref{odex}. Then $\vx^*(t)$ is the mean-field limit path in the sense that the weak law of large numbers for process $\cv(t)$ holds at any time $t$. {\blue Moreover, for any $t>0$, for any $\eps>0$, there exists $h_0>0$ such that if $h\leq h_0$ then
$$ \bP \{ |\cv(t) - \vx^*(t) | \geq \eps \} \leq e^{ -\frac{\alpha(\eps,t)}{2h} }, $$
where $\alpha(\eps,t) = \inf_{|\vy - \vx^*(t)|\geq \eps} I(\vy; \vx,t) >0.$
}
\end{cor}
The proof for  mean-field limit of $\cv(t)$ was given by Kurtz \cite{Kurtz71}. Here we use the large deviation principle at singe time $t$ to illustrate that large deviation principle implies the law of large numbers after the concentration of measure and gives the exponential rate of that concentration.
This can be seen  via verification method. 

First, we know  $\vx^*(t)$ is the solution to RRE \eqref{odex} if and only if the action cost is zero $I(y; x_0, t)=0$; see Appendix \ref{app2}.

Second, we choose any bounded continuous function $u_0$ in \eqref{var} such that at time $t$, $u_0(\vx^*(t))=\argmax_x u_0(x).$ Then \eqref{var} becomes
$\lim_{h\to 0} h\log \bE(e^{u_0(\cv(t))/h})=u_0(\vx^*(t))$. This is to say $\cv(t)$ concentrates at $\vx^*(t)$. Thus the concentration rate near a tube of $\vx^*$ is directly given by the obtained rate function $I(\vy;\vx,t)$.
 
We  remark that the sample path large deviation principle for $\{\cv(t)\}$ is more significant for studying the transition path theory and the proof is more involved; see \textsc{Agazzi} et.al. \cite{Dembo18}. We state the definition of the sample path large deviation principle in Appendix \ref{app_ld} and show it covers the single time large deviation principle in Theorem \ref{thm_dp}.


\section*{Acknowledgment}
The authors would like to thank Jin Feng for valuable discussions. 
The authors would also like to thank the Isaac Newton Institute for Mathematical Sciences at Cambridge for their support and hospitality during the program "Frontiers in kinetic theory: connecting microscopic to macroscopic scales - KineCon 2022," where work on this paper was partially undertaken. Yuan Gao was supported by NSF under Award DMS-2204288. Jian-Guo Liu was supported by NSF under Award DMS-2106988.

\bibliographystyle{alpha}
\bibliography{LD_vis_bib}

\appendix 

\section{Terminologies for macroscopic RRE, LDP and viscosity solutions}
  In this appendix, we review some known terminologies  for the large size limiting RRE \eqref{odex} and collect some preliminary results/concepts for the   detailed  balanced RRE system, and the associated Hamiltonian. We also give the definition for viscosity solutions and LDP for completeness.

\subsection{The macroscopic RRE and equilibrium}\label{app1}
 The $M\times N$ matrix $\nu:= \bbs{\nu_{ji}}=(\nu_{ji}^- - \nu_{ji}^+), j=1,\cdots,M, \, i=1,\cdots, N$ is called the Wegscheider matrix and $\nu^T$ is referred as the stoichiometric matrix \cite{mcquarrie1997physical}.  Then mass balance \eqref{mb_j} reads
 $\nu \vec{m}=\vec{0}$ and
  the  Wegscheider matrix $\nu$   has a nonzero kernel, i.e., 
$
\dim \bbs{\kk(\nu)} \geq 1.
$
Thus we have a direct decomposition for the species space
\begin{equation}\label{direct}
\bR^N = \ran(\nu^T)\oplus \kk(\nu),
\end{equation}
where $\ran(\nu^T)$ is the span of the column vectors $\{\vec{\nu}_j\}$ of $\nu^T$.
Denote the stoichiometric space $G:=\ran(\nu^T)$. Given an initial state $\vec{X}_0 \in \vq+G$, $\vq\in\kk(\nu)$, the dynamics of both mesoscopic \eqref{Csde} and macroscopic \eqref{odex} states stay in the same space $G_q:=\vq+G$, called a stoichiometric compatibility class.

\subsubsection{Detailed balance and characterization of steady states  for the macroscopic RRE}
Denote a steady state to RRE \eqref{odex} as $\peq$, which satisfies
\begin{equation} 
 \sum_{j=1}^M \vec{\nu}_j \bbs{\Phi^+_j(\peq) - \Phi^-_j(\peq)} =0.
\end{equation}
Under the assumption of detailed balance \eqref{DB} and \eqref{lma}, we have
\begin{equation}
\log k_j^+ -  \log k_j^- =   \vec{\nu}_{j} \cdot \log \peq
\end{equation}
Then $\vec{\nu}_{j} \cdot \log (\peq_1-\peq_2)=0$ and all the steady states of RRE \eqref{odex} can be characterized as follows.
For any $\vq\in \kk(\nu)$, there exists a unique steady states $\peq_*$ in the space $ \{\vx \in \vq+G;\,  \vx>0\}$.
This uniqueness of steady states in one stoichiometric compatibility class is a
  well-known result; c.f., \cite[Theorem 6A]{Horn72}, \cite[Theorem 3.5]{Kurtz15}.

\subsection{Properties of Hamiltonian $H$ and the convex conjugate}\label{app2}
Recall  the mass conservation law of chemical reactions  \eqref{mb_j} and   direct decomposition \eqref{direct}.  We further observe, for $H$, we have
$
H(\vec{0},\vec{x})\equiv 0$ and hence $\nabla_x H(\vec{0}, \vec{x}) \equiv 0.
$
  We summarize useful lemmas on the properties of $H$ and $L$.
\begin{lem}\label{lem_Hdege}
Hamiltonian $H(\vp,\vx)$ on $\bR^N\times \bR^N$ in \eqref{H} is degenerate in the sense that
\begin{equation}\label{Hdege}
H(\vp, \vx) = H(\vp_1, \vx),
\end{equation}
where $\vp_1\in \ran(\nu^T)$ is the direct decomposition of $\vp$ such that
\begin{equation}\label{decom}
\vp = \vp_1 + \vp_2, \quad \vp_1\in \ran(\nu^T),\,\, \vp_2 \in \kk(\nu).
\end{equation} 
\end{lem}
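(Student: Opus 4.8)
\textbf{Proof proposal for Lemma \ref{lem_Hdege}.}
The plan is to exploit the fact that in the formula \eqref{H} the momentum variable $\vp$ enters only through the $M$ scalar pairings $\vec{\nu}_j \cdot \vp$, $j = 1,\dots,M$, so that any component of $\vp$ annihilated by all the reaction vectors is invisible to $H$. First I would record this structural observation: writing
\begin{equation}
H(\vp,\vx) = \sum_{j=1}^M \Phi^+_j(\vx)\bbs{e^{\vec{\nu}_j\cdot\vp}-1} + \Phi^-_j(\vx)\bbs{e^{-\vec{\nu}_j\cdot\vp}-1},
\end{equation}
one sees that $H(\cdot,\vx)$ factors through the linear map $\vp \mapsto (\vec{\nu}_1\cdot\vp,\dots,\vec{\nu}_M\cdot\vp) = \nu\vp \in \bR^M$, where $\nu$ is the $M\times N$ Wegscheider matrix whose $j$-th row is $\vec{\nu}_j^{\,T}$ (see Appendix \ref{app1}).

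Next I would identify $\kk(\nu)$ explicitly: by definition $\vp_2 \in \kk(\nu)$ if and only if $\nu\vp_2 = \vec 0$, i.e. $\vec{\nu}_j\cdot\vp_2 = 0$ for every $j = 1,\dots,M$. Given the direct (indeed orthogonal) decomposition $\bR^N = \ran(\nu^T)\oplus\kk(\nu)$ from \eqref{direct}, every $\vp\in\bR^N$ has a unique representation $\vp = \vp_1 + \vp_2$ with $\vp_1\in\ran(\nu^T)$ and $\vp_2\in\kk(\nu)$, so the claimed decomposition \eqref{decom} is well defined. Then for each $j$,
\begin{equation}
\vec{\nu}_j\cdot\vp = \vec{\nu}_j\cdot\vp_1 + \vec{\nu}_j\cdot\vp_2 = \vec{\nu}_j\cdot\vp_1,
\end{equation}
since $\vp_2\in\kk(\nu)$. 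Substituting into the displayed formula for $H$ gives $H(\vp,\vx) = H(\vp_1,\vx)$ for all $\vx$, which is \eqref{Hdege}.

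There is no genuine obstacle here; the only point that deserves a word is that the decomposition \eqref{decom} is unambiguous, which is precisely the content of \eqref{direct}, and that $\kk(\nu)$ is exactly the common null space of the functionals $\vp\mapsto\vec{\nu}_j\cdot\vp$ — both facts being immediate from the definition of $\nu$ together with the mass-conservation relation $\nu\vm = \vec 0$ recalled at the start of Appendix \ref{app2}. Everything else is a one-line substitution.
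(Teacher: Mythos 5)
Your argument is correct: since each $\vec{\nu}_j$ is a row of $\nu$, $\kk(\nu)$ is exactly the common kernel of the functionals $\vp\mapsto\vec{\nu}_j\cdot\vp$, and $H$ depends on $\vp$ only through these pairings, so $H(\vp,\vx)=H(\vp_1,\vx)$ follows by substitution; the paper itself omits the proof (deferring to \cite{GL22}), and yours is the standard one-line argument one would expect there. One tiny remark: the mass-conservation relation $\nu\vm=\vec 0$ is not actually needed for either of the two facts you cite (it only guarantees that $\kk(\nu)$ is nontrivial, i.e.\ that the degeneracy statement has content), so that aside could be dropped.
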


\begin{lem}\label{Hconvex}
For any $\vx>0$, $H(\vp,\vx)$ defined in \eqref{H} is strictly convex and exponential coercivity w.r.t.  $\vp \in G$, i.e., for $\vp\in G$, there exists $A>0$ such that ${H(\vp,\vx)} \geq  A e^{\alpha |\vp|}$ for $|\vp|\gg 1$.
\end{lem}

Indeed, fix any $\vx>0$,
$
   {H(\vp,\vx)}  =   \sum_{j=1}^M \Phi_j^+(\vx)  \bbs{e^{\vec{\nu}_j \cdot \vp}-1}  + \Phi^-_j(\vx) \bbs{e^{-\vec{\nu}_j \cdot \vp}-1}. 
$
Since for $\vp\in G$, $\vec{\nu}_j \cdot \vp \neq 0$ and thus $|\vec{\nu}_j \cdot \vp | > \alpha |\vp| >0$ for some $j$. Hence for $|\vp|$ sufficient large, 
\begin{equation}
  {H(\vp,\vx)} \geq   \frac12   \min\{\Phi_j^+(\vx), \Phi^-_j(\vx) \}   e^{\alpha |\vp|}.   
\end{equation}
 
Since for $\vx>0$, $H$ defined in \eqref{H} is  convex and superlinear w.r.t $\vp$,  we compute the convex conjugate of $H$ via the Legendre transform. For any $\vs\in \bR^N$, define
\begin{equation}\label{L}
L(\vs,\vec{x}) := \sup_{\vec{p}\in \bR^N} \bbs{ \la \vp ,  \vs \ra - H(\vp, \vx)} 
\end{equation}

Then we have the following lemma.
\begin{lem}\label{lem:least}
For $L$ function defined in \eqref{L}, we know
 $L(\vs,\vx)\geq 0$ and 
\begin{equation}\label{LL}
L(\vs, \vx) = \left\{ \begin{array}{cc}
\max_{\vp \in G} \{ \vs \cdot \vp - H(\vp, \vx) \}<+\8, & \vs\in G \text{ and } \vx>0,\\
- \min_{\vp\in \bR^N} H(\vp,\vx)<+\8, & \vs = \vec{0},\\
+\8, & \text{ otherwise };
\end{array}  \right.
\end{equation}
moreover, $L$ is strictly convex in $G$ for $\vx>0$.
\end{lem}

\begin{proof}
 First, for $\vs= \vec{0}$, $L(\vec{0},\vx)= - \min_{\vp\in \bR^N} H(\vp,\vx)$ due to the coercivity of $H$. For $\vs\neq \vec{0}$, case (i), for some component $x_i\leq 0$, by the zero extension of $H$, we have $L(\vs,\vx)=+\8$; case (ii), if $\vs \in \ker(\nu)$, then take sup for $\vp\in \ker(\nu)$ gives $L(\vs,\vx)=+\8$.
For $\vx>0, \vp\in G$, since $H$ is superlinear, the maximum is attained at $\vp^*(\vs, \vx)$ satisfying
\begin{equation}\label{ts1}
\vs = \nabla_p H(\vp^*, \vx) = \sum_j \vec{\nu}_j \bbs{\Phi_j^+ e^{\vec{\nu}_j \cdot \vp^*} - \Phi_j^- e^{-\vec{\nu}_j \cdot \vp^*} }.
\end{equation}
Thus for $\vx>0,$
$
L(\vs, \vx) = \vs \cdot \vp^*(\vs,\vx) - H(\vp^*(\vs,\vx),\vx).
$
\end{proof}

Assume there is a $C^1$ least action curve $\gamma(t)$ connecting  $\gamma(0)=\vx> 0$ and $\gamma(t)=\vy> 0$ and $\gamma(\tau) > 0$ for all $\tau\in[0,t]$. Thus $0\leq I(\vy;\vx,t)<+\8$.  Then $\gamma(\tau)$ satisfies Euler-Lagrangian 
equation
\begin{equation}\label{EL}
\frac{\ud}{\ud t} \bbs{\frac{\pt L}{\pt \dot{\vx}}(\dot{\vx}(t), \vx(t))  }= \frac{\pt L}{\pt \vx}(\dot{\vx}(t), \vx(t)).
\end{equation}

\subsection{Concepts of viscosity solution}\label{app3}
Here for the convenience of readers, we adapt the notion of viscosity solutions used in \cite{Barles_Perthame_1987}.
\begin{defn}
Let $\Omega\subset \mathbb{R}^N$ be an open set. We say $u\in \usc(\Omega)$ (resp. $\lsc(\Omega)$) is a viscosity subsolution (resp. supersolution) to \eqref{resolventP}, if for any $x\in \Omega$ and any test function $\varphi\in C^\8(\mathbb{R}^N)$ such that $u-\varphi$ has a local maximum (resp. minimum) at $x$, it holds
\begin{equation}
u(x) - H(\nabla\varphi(x), x) \leq 0. \,\, (\text{resp. }\geq 0)
\end{equation}
We say $u$ is a viscosity solution if it is both a viscosity subsolution and supersolution. 
\end{defn}

\subsection{Definition of the sample path large deviation principle}\label{app_ld}
Let $D([0,T];\bR^N_+)$ be Skorokhod space and $AC([0,T];\bR^N)$ be the space of absolute continuous curves. We state the definition of the sample path large deviation principle in path space $D([0,T];\bR^N_+).$
\begin{defn}
Let $\cv $ be the large number process   defined in \eqref{Csde} with generator $Q_{\V}$. Assume $\cv (0)=\vx_0^{\V}\in \bR^N_+$.  Then we say the sample path $\cv (t)$, $t\in [0,T]$ satisfies the large deviation principle in $D([0,T];\bR^N_+)$ with the good rate function
\begin{equation}
A_{x_0, T}(\vx(\cdot)) := \left\{ 
\begin{array}{cc}
\int_0^T L(\dot{\vx}(t),\vx(t)) \ud t & \text{ if } \vx(0)=\vx_0,\,\, \vx(\cdot)\in AC([0,T];\bR^N),\\
+\8 & \text{ otherwise}
\end{array}
\right. 
\end{equation} 
if for any open set $\mathcal{E}\subset D([0,T];\bR^N_+)$, it holds
\begin{equation}
\liminf_{h\to 0} h \log \bP_{\vx_0^{\V}} \{\cv (t) \in  \mathcal{E}\} \geq - \inf_{\vx\in \mathcal{E}} A_{\vx_0, T}(\vx(\cdot)),
\end{equation}
while for any closed set $\mathcal{G}\subset D([0,T];\bR^N_+)$, it holds
\begin{align}
\limsup_{h\to 0} h \log \bP_{\vx_0^{\V}} \{\cv (t) \in \mathcal{G}\} \leq - \inf_{\vx\in \mathcal{G}} A_{\vx_0, T}(\vx(\cdot)).
\end{align}
\label{LD22}
\end{defn}
Under additional mild assumption for exponential tightness in $D([0,T];\bR^N_+)$, the sample path large deviation principle for $\cv(y)$ was proved in \cite[Theorem 1.6]{Dembo18}. It covers the  single time large deviation principle  in Theorem \ref{thm_dp}. Indeed, for any fixed open set $\mathcal{O}\subset \bR^N_+$, we take a special open set $\mathcal{E}\subset D([0,T];\bR^N_+) $ as $\mathcal{E} = \{\vx(\cdot)\in D([0,T];\bR^N_+); \, \vx(t) \in \mathcal{O}\}$.  Then
$$\inf_{\vx\in \mathcal{E}} A_{\vx_0, T}(\vx(\cdot)) = \inf_{\vy \in \mathcal{O}} \bbs{\inf_{\vx(\cdot)\in D([0,T];\bR^N_+),\, \vx(0)=\vx_0,\, \vx(t) = \vy } \int_0^T L(\dot{\vx}(s),\vx(s)) \ud s} = \inf_{\vy \in \mathcal{O}} I(\vy;\,\vx_0, t). $$
Here in the last equality, the least action from $0$ to $T$ is the combination of  the least action from $0$ to $t$ and a zero-cost action for $t$ to $T$.

\end{document}